\newtheorem{theorem}{Theorem}[section]
\newtheorem{proposition}[theorem]{Proposition}
\newtheorem{prop}[theorem]{Proposition}
\newtheorem{lemma}[theorem]{Lemma}
\newtheorem{lem}[theorem]{Lemma}
\newtheorem{definition}[theorem]{Definition}
\newtheorem{defi}[theorem]{Definition}
\newtheorem{coro}[theorem]{Corollary}
\newtheorem{example}[theorem]{Example}
\newtheorem{remark}{Remark}
\newtheorem{fact}[theorem]{Fact}
\def\as{\mathrm{as}}
\def\Lpk{\mathrm{Lpk}}
\def\LPV{\mathrm{Lpv}}
\def\Val{\mathrm{Val}}
\def\Lpv{\mathrm{Lpv}}
\def\S{{\mathfrak S}}
\def\B{{\mathfrak B}}
\def\D{{\mathfrak D}}
\def\run{\textrm{run}\,}
\def\des{\textrm{des}}
\def\Neg{\mathrm{Negs}}
\def\leaf{\mathrm{leaf}}
\def\red{\textcolor{red}}
\def\Orb{\mathrm{Orb}}
\def\hrn{\mathcal{HR}_n}
\def\hr{\mathcal{HR}}
\def\bhr{\mathcal{BHR}}
\def\dhr{\mathcal{DHR}}
\def\A{\mathcal{A}}
\date{\today}
\tikzset{
  leaf/.style   ={circle,draw,minimum size=6mm,inner sep=1pt},
  minnode/.style={leaf, fill=blue!12, draw=blue!60!black},
  maxnode/.style={leaf, fill=red!12,  draw=red!60!black},
  note/.style   ={font=\scriptsize},
}
\begin{document}

\title[Counting alternating runs via Hetyei-Reiner trees]{Counting permutations by alternating runs via Hetyei-Reiner trees}
\author{Qiongqiong Pan, Yunze Wang, and Jiang Zeng}
\address{College of Mathematics and Physics, Wenzhou University\\
Wenzhou 325035, PR China}
\email{qpan@wzu.edu.cn}
\address{College of Mathematics and Physics, Wenzhou University\\
Wenzhou 325035, PR China}
\email{yzwang@stu.wzu.edu.cn}
\address{Universite Claude Bernard Lyon 1, CNRS, Centrale Lyon, INSA Lyon, 
Université Jean Monnet, ICJ UMR5208, 69622 Villeurbanne, France}
\email{zeng@math.univ-lyon1.fr}

\keywords{Alternating run, Eulerian polynomial, David-Barton identity, $\min-\max$ tree, André permutation}
\begin{abstract}
The generating polynomial of all $n$--permutations with respect to the number of alternating runs possesses a root at $-1$ of multiplicity $\lfloor (n-2)/2 \rfloor$ for $n \ge 2$. 
This fact can be deduced by combining the David--Barton formula for Eulerian polynomials with the Foata--Schützenberger $\gamma$--decomposition of these polynomials. 
Recently, Bóna provided a group--action proof of this result. 
In the present paper, we propose an alternative approach based on the Hetyei--Reiner action on binary trees, which yields a new combinatorial interpretation of Bóna’s quotient polynomial. 
Furthermore, we extend our study to analogous results for permutations of types~B and~D. 
As a consequence of our bijective framework, we also obtain combinatorial proofs of David--Barton type identities for permutations of types~A and~B.
\end{abstract}

\maketitle
\tableofcontents

\section{Introduction}
Let $\pi=\pi_1\pi_2\cdots\pi_n$ be a permutation in the symmetric group~$\mathfrak{S}_n$.
We say that $\pi$ \emph{changes direction} at an index $i\in\{2,\ldots,n-1\}$ if
\[
\pi_{i-1}<\pi_i>\pi_{i+1}
\qquad\text{or}\qquad
\pi_{i-1}>\pi_i<\pi_{i+1}.
\]
In other words, $\pi$ changes direction at~$i$ whenever a local rise is followed by a fall,
or vice versa.
The number of \emph{alternating runs} of~$\pi$, denoted $\run(\pi)$,
is defined as one plus the number of indices at which $\pi$ changes direction.
Equivalently, $\pi$ has $k$ alternating runs if there are $k-1$ indices where $\pi$ changes direction.

\begin{example}
The permutation
\[
\pi=7\,3\,2\,5\,6\,9\,1\,4\,8
\]
changes direction at positions $i=3,6,7$,
and thus has $4$ alternating runs:
\[
732,\quad 2569,\quad 91,\quad 148.
\]
Hence $\run(\pi)=4$.
\end{example}

For each $n\ge1$, the \emph{run polynomial} $R_n(x)$ is defined by
\begin{equation}\label{def: run polynomials}
    R_n(x)\;=\;\sum_{\pi\in \mathfrak{S}_n} x^{\run(\pi)}.
\end{equation}
The coefficients of $R_n(x)$ form the sequence
\href{https://oeis.org/A059427}{A059427} in the \emph{On-Line Encyclopedia of Integer Sequences}~\cite{oeis22}.
It is well known~\cite{BE00,Ma13,St08, Wi99} that
the polynomial $R_n(x)$ has a factor $(1+x)^m$, where
\begin{equation} \label{multiplicity-m}
m=\bigl\lfloor\tfrac{n}{2}\bigr\rfloor-1, \qquad n\ge2.
\end{equation}
A quick proof of this fact can be obtained by combining
a formula of David--Barton with the
$\gamma$--expansion formula for the Eulerian polynomials
due to Foata and Schützenberger (see Section~\ref{link to eulerian polynomials}).

\smallskip
In a recent paper, Bóna~\cite{Bo21} gave a non-inductive proof
of this divisibility by constructing an abelian group
isomorphic to~$\mathbb{Z}_2^{\,m}$ that acts on~$\mathfrak{S}_n$
so that the orbits are all of size~$2^m$.
Bóna also characterized the quotient
\[
\frac{R_n(x)}{(1+x)^m}
\]
as the enumerative polynomial of a certain family of
ad hoc permutations with respect to the number of alternating runs,
and suggested finding other interpretations of this quotient.

\smallskip
In the present paper, we take an alternative approach to Bóna’s problem
by using a group action on the $\min-\max$ trees
introduced by Hetyei and Reiner~\cite{HR98},
and further developed by Foata and Han~\cite{FH01};
see also~\cite[§1.6.3]{St97}.
The Hetyei--Reiner group action has recently found
successful applications in proving
the $\gamma$--positivity of bivariate Eulerian polynomials,
see~\cite{Su21,PZ23}.

\subsection*{Organization of the paper.}
The remainder of this paper is organized as follows.
In Section~\ref{sec:HR--action} we recall the Hetyei--Reiner action
(HR--action) on min--max trees.
In Section~\ref{sec:typeA} we apply the HR--action to study
the longest alternating subsequences and the alternating runs in permutations,
and provide a combinatorial explanation of their connection
with the $\gamma$--positive expansion of the Eulerian polynomials.
In Section~\ref{sec:typeB} we introduce a modified HR--action
to handle alternating runs of type~B permutations.
Finally, in Section~\ref{sec:typeD} we study the alternating runs
in type~D.

\section{Hetyei-Reiner action}\label{sec:HR--action}

\begin{definition}[Foata–Han~\cite{FH01}]
A $\min-\max$ tree is a rooted binary tree that satisfies the following conditions:
\begin{enumerate}[label=(\roman*)]
\item it is \emph{rooted};
\item it is \emph{binary}, i.e., each node has at most two children;
\item it is \emph{topological}, i.e., the left child is distinguished from the right child;
\item it is \emph{labeled}, that is, its nodes are in bijection with a finite totally ordered set of labels;
\item it satisfies the \emph{min--max property}:
for every node $x$, the label of $x$ is either the minimum or the maximum of all labels appearing in the subtree rooted at $x$.

A min--max tree is called a \emph{Hetyei–Reiner tree} (or \emph{HR--tree}) if, in addition, it satisfies the following \emph{HR-property}:
\item
whenever $s$ is an inner node whose label is the minimum (respectively, maximum) among the labels of its subtree $T(s)$ (rooted at $s$), the right subtree $T^r(s)$ (rooted at the right child of $s$) is nonempty and contains the node whose label is the maximum (respectively, minimum) in $T(s)$.
\end{enumerate}
\end{definition}

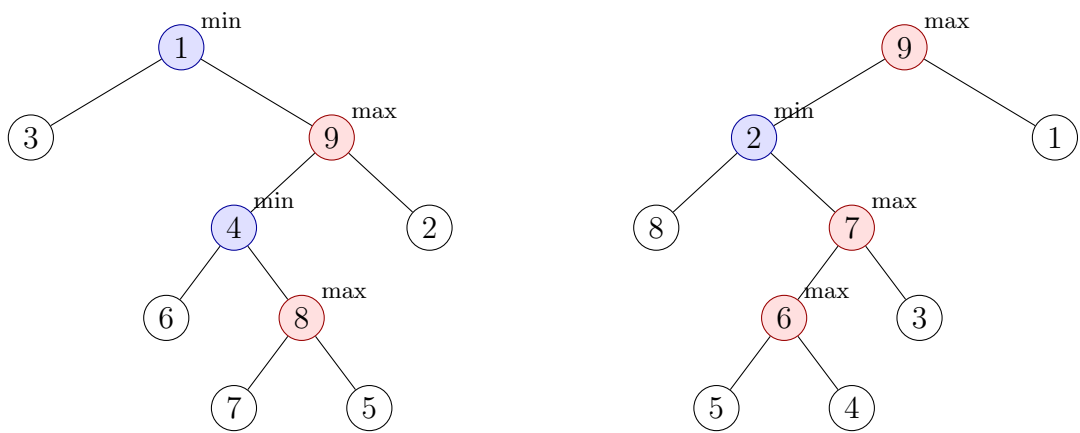
\begin{figure}[ht]
\begin{tikzpicture}[level distance=12mm,
  level 1/.style={sibling distance=40mm},
  level 2/.style={sibling distance=26mm},
  level 3/.style={sibling distance=18mm}]

\node[minnode] (r1) {1}
  child{ node[leaf] {3} }
  child{ node[maxnode] (A1) {9}
    child{ node[minnode] (B1) {4}
      child{ node[leaf] {6} }
      child{ node[maxnode] (C1) {8}
        child{ node[leaf] {7} }
        child{ node[leaf] {5} }
      }
    }
    child{ node[leaf] {2} }
  };

\node[note, above right=-1mm and -1mm of r1] {min};
\node[note, above right=-1mm and -1mm of A1] {max};
\node[note, above right=-1mm and -1mm of B1] {min};
\node[note, above right=-1mm and -1mm of C1] {max};

\node[maxnode, right=9cm of r1] (r2) {9}
  child{ node[minnode] (A2) {2}
    child{ node[leaf] {8} }
    child{ node[maxnode] (B2) {7}
      child{ node[maxnode] (C2) {6}
        child{ node[leaf] {5} }
        child{ node[leaf] {4} }
      }
      child{ node[leaf] {3} }
    }
  }
  child{ node[leaf] {1} };

\node[note, above right=-1mm and -1mm of r2] {max};
\node[note, above right=-1mm and -1mm of A2] {min};
\node[note, above right=-1mm and -1mm of B2] {max};
\node[note, above right=-1mm and -1mm of C2] {max};




\end{tikzpicture}
\caption{Left: the tree $T(1)$ is an HR--tree. 
Right: the tree $T(9)$ is not an HR--tree, since the label $8$ 
lies in the left subtree $T^{\ell}(2)$, contradicting the HR property at node $2$.}
 \end{figure}

Let $\mathcal{HR}_n$ be the set of HR--trees of order $n$, that is, the set of HR--trees labeled $1, \ldots, n$.  
For a binary tree $T$ with 
$n$ nodes $\{a_1<a_2<\cdots <a_n\}$ we define 
the word $w(T)$ by traversing $T$ in a \emph{left-first manner}.
Hetyei and Reiner~\cite{HR98} proved that $T\mapsto w(T)$ \emph{left-first order reading} bijectively maps HR--trees in $\mathcal{HR}_n$ to permutations of the symmetric group $\S_n$.

Reversely,  
starting from  a permutation $w=w_1w_2\ldots w_n$ of $[n]$, one can  recover 
the corresponding HR--tree $T_w$ on  
  $\{w_1,\ldots, w_n\}$  recursively as follows.
\begin{itemize}
\item [(1)]
If $i$ is the least integer 
such that $w_i$ is the minimum or maximum of
$\{w_1,w_2,\ldots,w_n\}$, 
then  $w_i$ is the root of $T_w$.
\item[(2)]
Define $T_{w_1\ldots w_{i-1}}$
and $T_{w_{i+1}\ldots w_n}$   to be the left and right subtrees of $w_i$.
\end{itemize}

An inner node in $T_w$ is called a \emph{min} node ~(resp. \emph{max}) if it is the minimum letter~(resp. maximum) among all its descendants. For example, the left tree in Fig.~1 is $T_w$ with $w=316478592$.
Let $T_{w}(w_i)$ denote
 the subtree (of $T_w$)  rooted at  $w_i$. 
 Let $T_{w}^l(w_i)$ (resp. $T_{w}^r(w_i)$)  denote the 
 subtree rooted at the left-child (resp. right-child) of $w_i$.

\begin{theorem}[Hetyei-Reiner] The left-first order reading   is a bijection 
between  $\mathcal{HR}_n$ and $\S_n$.
\end{theorem}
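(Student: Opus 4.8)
\emph{Proof plan.} The plan is to verify that the recursive reconstruction $w\mapsto T_w$ recalled above is well defined --- i.e.\ that $T_w$ is always an HR-tree --- and that it is a two-sided inverse of the left-first reading $T\mapsto w(T)$. Both maps respect the decomposition $w(T)=w\bigl(T^{\ell}(r)\bigr)\,r\,w\bigl(T^{r}(r)\bigr)$, where $r$ denotes the root and $T^{\ell}(r),T^{r}(r)$ its left and right subtrees, so the whole argument is an induction on $n$, and the only substantive point is to locate where the HR-property enters.

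\textbf{Step 1: $T_w\in\mathcal{HR}_n$.} Induct on $n$, the case $n=1$ being trivial. For $n\ge 2$ let $m$ and $M$ be the least and greatest of $\{w_1,\dots,w_n\}$; they occur at two distinct positions, each of which is a position $j$ with $w_j\in\{m,M\}$, so the least such index $i$ satisfies $i<n$. Thus the root $w_i$ is a global extreme of $T_w(w_i)$ (the min--max property) and is an inner node whose right subtree $T_w^{r}(w_i)=T_{w_{i+1}\cdots w_n}$ is nonempty. If $w_i=m$ (the case $w_i=M$ is symmetric), then minimality of $i$ forces $M$ to occur among $w_{i+1},\dots,w_n$, so the maximum of $T_w(w_i)$ lies in $T_w^{r}(w_i)$, which is exactly the HR-property at the root. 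The subtrees $T_{w_1\cdots w_{i-1}}$ and $T_{w_{i+1}\cdots w_n}$ are HR-trees by the induction hypothesis, and since the min--max and HR conditions at a node concern only the subtree rooted there, they stay valid in $T_w$. So $T_w$ is an HR-tree, and $w\mapsto T_w$ is a well-defined map $\S_n\to\mathcal{HR}_n$; conversely $w(T)$ is visibly a rearrangement of $1,\dots,n$, so $T\mapsto w(T)$ maps $\mathcal{HR}_n$ into $\S_n$.

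\textbf{Step 2: $T_{w(T)}=T$.} Let $r$ be the root label of $T\in\mathcal{HR}_n$; by the min--max property $r$ is one of the two global extremes of $T$. If $r$ is a leaf there is nothing to show. Otherwise $r$ is an inner node, and the HR-property gives that $T^{r}(r)$ is nonempty and contains the global extreme of $T$ other than $r$. In $w(T)=w\bigl(T^{\ell}(r)\bigr)\,r\,w\bigl(T^{r}(r)\bigr)$ the letters preceding $r$ are exactly the labels of $T^{\ell}(r)$, hence none of them is a global extreme of $T$. So $r$ occupies the least position of $w(T)$ that carries a global extreme, the reconstruction chooses $r$ as the root of $T_{w(T)}$, and it assigns $T_{w(T^{\ell}(r))}$ and $T_{w(T^{r}(r))}$ as the subtrees. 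As subtrees of HR-trees are again HR-trees, the induction hypothesis gives $T_{w(T^{\ell}(r))}=T^{\ell}(r)$ and $T_{w(T^{r}(r))}=T^{r}(r)$, whence $T_{w(T)}=T$.

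\textbf{Step 3: $w(T_w)=w$ and conclusion.} By construction $T_w$ has root $w_i$, left subtree $T_{w_1\cdots w_{i-1}}$ and right subtree $T_{w_{i+1}\cdots w_n}$, so $w(T_w)=w(T_{w_1\cdots w_{i-1}})\,w_i\,w(T_{w_{i+1}\cdots w_n})=(w_1\cdots w_{i-1})\,w_i\,(w_{i+1}\cdots w_n)=w$ by induction. Hence $T\mapsto w(T)$ and $w\mapsto T_w$ are mutually inverse bijections between $\mathcal{HR}_n$ and $\S_n$. The one step that genuinely uses the HR-property rather than the bare min--max property is the claim in Step~2 that the root is the \emph{first} global extreme encountered in the left-first reading; this is precisely what fails for general min--max trees --- in the right-hand tree of Figure~1 the subtree rooted at $2$ has maximum $8$, yet its left-first reading begins $8,2,\dots$, so reconstruction would wrongly take $8$ as its root --- and it is exactly the obstruction that the HR-property is tailored to remove.
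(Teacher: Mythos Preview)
Your proof is correct. The paper does not actually prove this theorem --- it attributes the result to Hetyei and Reiner~\cite{HR98}, describes the inverse construction $w\mapsto T_w$, and then states the theorem without further argument. Your write-up supplies exactly the details the paper omits, following the same approach implicit in its description: verify that $T_w$ lands in $\mathcal{HR}_n$, and then check that $w\mapsto T_w$ and $T\mapsto w(T)$ are mutual inverses by induction on the size of the label set. Your identification of the one place where the HR-property (as opposed to the bare min--max property) is genuinely needed --- guaranteeing in Step~2 that the root is the \emph{first} global extreme in the left-first reading --- is precisely the point, and your reference to the right-hand tree of Figure~1 is the same counterexample the paper uses to illustrate why condition~(vi) matters.
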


\begin{definition}[{$\pi_i$--factorization}]
Let $Y=\{y_1<y_2<\cdots<y_n\}$ be a finite totally ordered set, and let
$\pi=\pi_1\pi_2\cdots\pi_n$ be a permutation of~$Y$ written as a word.
For a fixed index $i\in[n]$, the \emph{$\pi_i$--factorization} of~$\pi$ is the sequence
\[
(w_1,\,w_2,\,\pi_i,\,w_4,\,w_5),
\]
where
\begin{enumerate}[label=(\arabic*)]
  \item the juxtaposition product $w_1w_2\pi_i w_4w_5$ is equal to~$\pi$;
  \item $w_2$ is the longest right factor of $\pi_1\pi_2\cdots\pi_{i-1}$ all of whose letters are greater than~$\pi_i$;
  \item $w_4$ is the longest left factor of $\pi_{i+1}\pi_{i+2}\cdots\pi_n$ all of whose letters are greater than~$\pi_i$.
\end{enumerate}
Note that any of $w_1,w_2,w_4,w_5$ may be the empty word~$\varepsilon$.
\end{definition}

\begin{remark}
Equivalently, the factorization
\[
\pi = w_1\,w_2\,\pi_i\,w_4\,w_5
\]
is determined by
\begin{align*}
w_2&=\text{maximal suffix of }\pi_1\pi_2\cdots\pi_{i-1}\text{ with all letters }>\pi_i,
\\
w_4&=\text{maximal prefix of }\pi_{i+1}\pi_{i+2}\cdots\pi_n\text{ with all letters }>\pi_i,
\end{align*}
and then
\[
w_1=\pi_1\cdots \pi_{i-|w_2|-1},\qquad
w_5=\pi_{i+|w_4|+1}\cdots \pi_n .
\]
This decomposition is unique.
\end{remark}

\begin{example}
Let $Y=\{1<2<3<4<5<6<7<8\}$ and
\[
\pi=6\,4\,7\,3\,8\,2\,5\,1.
\]

\smallskip
\noindent
(1) For $i=5$, we have $\pi_i=8$.  
The left part $\pi_1\pi_2\pi_3\pi_4=6\,4\,7\,3$ has no suffix with letters $>8$,
so $w_2=\varepsilon$, $w_1=6\,4\,7\,3$.
The right part $\pi_6\pi_7\pi_8=2\,5\,1$ has no prefix $>8$,
so $w_4=\varepsilon$, $w_5=2\,5\,1$.
Hence the $8$--factorization is $(w_1,w_2,\pi_i,w_4,w_5)=(6473,\varepsilon,8,\varepsilon,251)$.

\smallskip
\noindent
(2) For $i=3$, we have $\pi_i=7$.
Then $w_2=\varepsilon$, $w_1=64$, and on the right
$\pi_4\pi_5\pi_6\pi_7\pi_8=3\,8\,2\,5\,1$
has maximal prefix $w_4=8$ of letters $>7$, giving $w_5=3\,2\,5\,1$.
Thus the $7$--factorization is $(64,\varepsilon,7,8,3251)$, i.e.,
\[
64\;\underbrace{\varepsilon}_{w_2}\;7\;\underbrace{8}_{w_4}\;3\,2\,5\,1.
\]
\end{example}

\begin{remark}[Algorithmic description]
Given $i$, the $\pi_i$--factorization can be found in linear time:
\begin{enumerate}
  \item Scan left from $i-1$ while the entries are $>\pi_i$ to collect $w_2$; stop at the first $\le\pi_i$.
  \item Scan right from $i+1$ while the entries are $>\pi_i$ to collect $w_4$; stop at the first $\le\pi_i$.
  \item The remaining prefix and suffix are $w_1$ and $w_5$.
\end{enumerate}
\end{remark}

\begin{defi}[André permutation]
A permutation $\pi\in\S_n$ is an Andr\'e permutation  (of the first kind) if the following conditions hold:
\begin{itemize}
\item[(i)]
$\pi$ has no double descents, and no final descent, i.e., $\pi_{n-1}<\pi_n$.
\item[(ii)]
For $i\in\{2,\ldots,n-1\}$, let  $(w_1,w_2,\pi_i,w_4,w_5)$ be the  $\pi_i$-factorization of $\pi$. Then the maximum letter of $w_2$ is smaller than the maximum letter of $w_4$.
\end{itemize}
A tree $T_w$ is called \emph{André tree}
if $w$ is an André permutation.
\end{defi}
Let $\A_n$ be the set of Andr\'e permutations of $[n]$.
The Andr\'e permutations of length $n=2, 3, 4$ are
\[
12; \quad 123,\; 213; \quad 1234,\; 2134,\; 2314,\; 3124,\; 1324.
\]
By the definition, the last letter of an André permutation of  $[n]$ is necessarily $n$.
\begin{lem}[Hetyei and Reiner]\label{lemma-HR}
A permutation $w\in\S_n$ is an Andr\'e permutation if and only if all inner nodes of $T_w$ are $\min$-nodes.
\end{lem}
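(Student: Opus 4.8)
The plan is to argue by induction on $n$, exploiting the recursive construction of $T_w$ together with the fact that the conditions defining an Andr\'e permutation split along the root. For the induction to close it is convenient to allow \emph{Andr\'e words} on an arbitrary finite totally ordered set $Y$: the notion ``Andr\'e'' and the map $w\mapsto T_w$ both make sense verbatim for such words and are invariant under order-isomorphism. I will also use the elementary fact, recorded above for $\S_n$ and valid in general by the same argument, that the last letter of an Andr\'e word on $Y$ equals $\max Y$.

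The first step is to identify the root. When $|Y|=n\ge 2$, the root of $T_w$ is an inner node; hence ``all inner nodes are $\min$-nodes'' forces the root to be $\min Y$, while if $w$ is an Andr\'e word then $w_n=\max Y$, so $\min Y$ occurs before $\max Y$ and the root of $T_w$ is again $\min Y$. Writing $j$ for the unique position of $\min Y$, we obtain $w=u\,(\min Y)\,v$ with $u=w_1\cdots w_{j-1}$ and $v=w_{j+1}\cdots w_n$, and the left and right subtrees of $T_w$ are $T_u$ and $T_v$. Since the root is a $\min$-node, $\max Y$ lies strictly to the right of position $j$, so $\max Y$ occurs in $v$ and $v\neq\varepsilon$.

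The heart of the argument is the reduction: $w$ is an Andr\'e word $\iff$ both $u$ and $v$ are Andr\'e words. Granting this, the inductive hypothesis applied to $u$ and $v$ settles both directions, because the inner nodes of $T_w$ are exactly the root together with the inner nodes of $T_u$ and of $T_v$. To prove the reduction I would carry each clause of the definition across position $j$. For ``no double descent'': a double descent of $w$ lies inside $u$, inside $v$, or meets position $j$; the cases meeting $j$ are impossible because $w_j=\min Y$ is below both its neighbours, except for the pattern $w_{j-2}>w_{j-1}>w_j$, which is exactly a final descent of $u$; and conversely a double descent of $u$ or $v$ is one of $w$. For ``no final descent'': since $w_n=\max Y$ (once we know $v$ is Andr\'e, or directly when $w$ is), this clause for $w$ reads $w_{n-1}<\max Y$, which is equivalent to $v$ having no final descent. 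For the $\pi_i$-factorization condition the key point is \emph{localization}: if $2\le i\le j-2$, then the leftward scan producing the maximal suffix $S_i$ of $w_1\cdots w_{i-1}$ with all letters $>w_i$ and the rightward scan producing the maximal prefix $P_i$ of $w_{i+1}\cdots w_n$ with all letters $>w_i$ both stop before position $j$ (because $w_j=\min Y<w_i$), so the $w_i$-factorization of $w$ coincides with the $w_i$-factorization of $u$; symmetrically for $j+2\le i\le n-1$ and $v$. Finally, the three boundary indices are checked by hand: at $i=j$ the factorization is $(\varepsilon,u,\min Y,v,\varepsilon)$ and the required inequality reads $\max u<\max v=\max Y$, which holds automatically; at $i=j-1$ one has $P_{j-1}=\varepsilon$, so the condition forces $S_{j-1}=\varepsilon$, i.e.\ $w_{j-2}<w_{j-1}$, i.e.\ ``$u$ has no final descent''; at $i=j+1$ one has $S_{j+1}=\varepsilon$, so the condition is vacuous.

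The step I expect to be the main obstacle is this last piece of bookkeeping around the $\pi_i$-factorization: matching the interior indices of $w$ with those of $u$ and of $v$, handling the convention for ``$\max S_i<\max P_i$'' when $S_i$ or $P_i$ is empty, and checking that the three boundary indices $i=j-1,j,j+1$ contribute precisely the two ``no final descent'' clauses of $u$ and $v$ and impose nothing further. The remaining ingredients --- identifying the root, transporting double descents, and the auxiliary claim $w_n=\max Y$ --- are routine once localization is established.
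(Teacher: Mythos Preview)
Your inductive argument is correct. The recursive identification of the root as $\min Y$, the localization of the $\pi_i$-factorization on either side of position $j$, and the boundary analysis at $i=j-1,j,j+1$ all go through; the convention that ``$\max w_2<\max w_4$'' fails when $w_4=\varepsilon$ unless $w_2=\varepsilon$ is the standard one and is exactly what you use. The reduction ``$w$ Andr\'e $\iff$ $u$ and $v$ Andr\'e'' closes the induction cleanly.

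The paper itself does not supply a proof of this lemma, simply attributing it to Hetyei--Reiner. However, it does prove the type~B analogue (Lemma~\ref{andre-B}), and that argument is quite different from yours and worth contrasting. There the forward direction is handled by contradiction rather than induction: assuming some inner node is a $\max$-node, one takes the \emph{largest} such label $x$, reads the subtree rooted at $x$ as $uxv$, and examines the factorization at $\mathsf{first}(v)$; the Andr\'e condition then forces $\max w_4>x$, contradicting maximality of $x$. The reverse direction is also non-inductive: one observes directly that when every inner node is a $\min$-node, the subword $w_2\,\pi_i\,w_4$ in any $\pi_i$-factorization is exactly the left-first reading of the subtree rooted at $\pi_i$, and the HR property forces $\max w_2<\max w_4$. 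Your approach trades this global extremal argument for a clean recursive one; it is longer in bookkeeping but arguably more transparent about \emph{why} the Andr\'e conditions and the min-node conditions decompose compatibly along the tree structure, and it makes explicit that the class of Andr\'e words is closed under passing to the left and right factors at the minimum.
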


\begin{definition}[HR-action $\psi_i$ on HR--trees]
Let $w\in\S_n$ and let $T_w$ be its HR--tree. For $1\le i\le n$, denote by $v_i$ the node
labeled $w_i$ and by $T^r(v_i)$ the right subtree of $v_i$. Define the operator
$\psi_i$ acting on $T_w$ as follows.

\smallskip
\noindent
\textbf{(a) Trivial case.} If $T^r(v_i)$ is empty, set $\psi_i(T_w)=T_w$.

\smallskip
\noindent
\textbf{(b) Nontrivial case.} Let
\[
R_i \;:=\; \{\text{labels of the nodes of }T^r(v_i)\}\,\cup\,\{w_i\},
\]
viewed as a totally ordered set with the natural order on labels. Define an
order-preserving bijection
\[
\tau_i : R_i \longrightarrow R_i
\]
by
\[
\tau_i(w_i)=
\begin{cases}
\max R_i,& \text{if }v_i\text{ is a $\min$-node},\\[2pt]
\min R_i,& \text{if }v_i\text{ is a $\max$-node},
\end{cases}
\qquad
\text{and }\quad
\begin{minipage}[c]{0.45\linewidth}
    \text{$\tau_i$ is increasing on } $R_i\setminus\{w_i\}$\\
\text{ onto }$R_i\setminus\{\tau_i(w_i)\}$.
\end{minipage}
\]
Equivalently: remove $w_i$ from $R_i$, slide the remaining labels monotonically to
fill the gap, and place $w_i$ at the extremum (maximum if $v_i$ is a $\min$-node,
minimum if $v_i$ is a $\max$-node).

Relabel the nodes in the induced subtree $U_i:=\{v_i\}\cup V\!\bigl(T^r(v_i)\bigr)$
by applying $\tau_i$ to their current labels, and leave all other nodes of $T_w$
unchanged. The shape of $T_w$ is not modified. Denote the resulting labeled tree
by $\psi_i(T_w)$. 
\end{definition}

\begin{remark}
\begin{itemize}
\item The phrase “preserve the relative orders in $T^r(v_i)$” is captured by the
order-preserving map $\tau_i$: among the nodes of $T^r(v_i)$, the pairwise order
of labels is unchanged.
\item If $v_i$ is a $\min$-node (resp.\ $\max$-node), then after applying $\psi_i$ the
new label at $v_i$ is the largest (resp.\ smallest) element of $R_i$.
\item Only labels in $\{v_i\}\cup V(T^r(v_i))$ are affected; the tree shape and all
other labels remain the same.
\end{itemize}
\end{remark}

\begin{example} Let $w=562314$ with $i=2$. See Figure~\ref{fig:HR-action}.
\textbf{Before:} $v_i$ is a $\max$-node, $w_i=6$, 
$R_i=\{1,2,3,4,6\}$, right subtree labels are $\{1,2,3,4\}$, and
\(\displaystyle
\tau_i:\; 1\mapsto2,\,2\mapsto3,\,3\mapsto4,\,4\mapsto6,\, 6\mapsto1.
\)
\textbf{After:} apply $\psi_i$ (shape unchanged) 
new labels on $\{v_i\}\cup T^r(v_i)$ via $\tau_i$. 

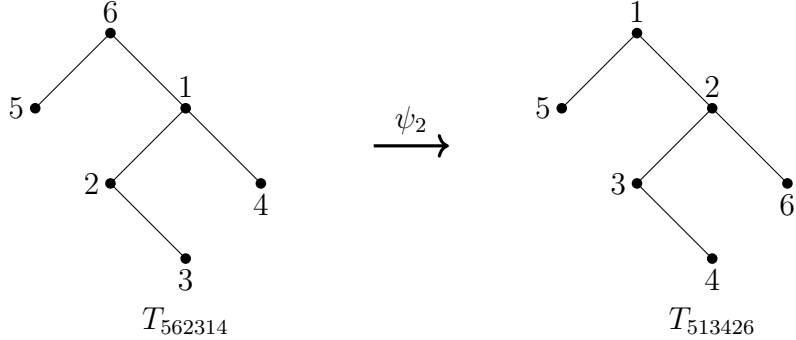
\begin{figure}[ht]
\begin{tikzpicture}
\draw (0,0)--(-1,-1);
\draw (0,0)--(1,-1)--(2,-2);
\draw (1,-1)--(0,-2)--(1,-3);
\fill (0,0) circle (2pt);
\node[above] at (0,0) {$6$};
\node[left] at (-1,-1) {$5$};
\node[above] at (1,-1) {$1$};
\node[left] at (0,-2) {$2$};
\node[below] at (1,-3) {$3$};
\node[below] at (2,-2) {$4$};
\fill (-1,-1) circle (2pt);
\fill (1,-1) circle (2pt);
\fill (2,-2) circle (2pt);
\fill (0,-2) circle (2pt);
\fill (1,-3) circle (2pt);
\node[below] at (1,-3.5) {$T_{562314}$};
\draw[][->,very thick](3.5,-1.5)--(4.5,-1.5) node[midway, above]{$\psi_2$};
\draw (7,0)--(6,-1);
\draw (7,0)--(8,-1)--(9,-2);
\draw (8,-1)--(7,-2)--(8,-3);
\fill (7,0) circle (2pt);
\node[above] at (7,0) {$1$};
\node[left] at (6,-1) {$5$};
\node[above] at (8,-1) {$2$};
\node[left] at (7,-2) {$3$};
\node[below] at (8,-3) {$4$};
\node[below] at (9,-2) {$6$};
\fill (6,-1) circle (2pt);
\fill (8,-1) circle (2pt);
\fill (9,-2) circle (2pt);
\fill (7,-2) circle (2pt);
\fill (8,-3) circle (2pt);
\node[below] at (8,-3.5) {$T_{513426}$};
\end{tikzpicture}
\caption{ HR-action $\psi_2$ on HR--tree $T_{562314}$}
\label{fig:HR-action}
\end{figure}
\end{example}

\begin{lem}[Hetyei and Reiner]
The action $\psi_i$ is an involution  on 
$\mathcal{HR}_n$; and 
$\psi_i$ and $\psi_j$ commute for all $i,j\in [n]$.
\end{lem}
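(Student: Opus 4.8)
The plan is to exploit two structural features: $\psi_i$ never changes the underlying tree shape, and, because the order in which the nodes are visited in the left-first reading depends only on the tree shape, $\psi_i$ always acts at one fixed node $v_i$ of the tree, independently of which operators were applied before. First I would record the relabelling performed by $\psi_i$ in a convenient form: if $T^r(v_i)=\emptyset$ it is the identity, and otherwise it changes labels only on $U_i=\{v_i\}\cup V(T^r(v_i))$, replacing the labelling there by its image under the cyclic shift $\sigma_i$ of the totally ordered set $R_i$ that sends $w_i$ (an extremum of $R_i$) to the opposite extremum and is increasing on $R_i\setminus\{w_i\}$; this is just $\tau_i$ rephrased. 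I will use two facts. (a) The only element of $R_i$ at which $\sigma_i$ is not order-preserving is $w_i$, so the restriction of $\sigma_i$ to any subset of $R_i$ not containing $w_i$ is an order isomorphism onto its image. (b) Since $v_i$ has a nonempty right subtree it is an inner node, so if $v_i$ is a $\min$-node (resp.\ $\max$-node) then $w_i=\min R_i$ (resp.\ $\max R_i$), while the HR-property at $v_i$ forces the opposite extremum of its subtree $T(v_i)$ to lie in $T^r(v_i)$, whence that extremum equals $\max R_i$ (resp.\ $\min R_i$). A consequence is that $\psi_i$ permutes labels within $T(v_i)$ without altering the label multiset of the subtree of any ancestor of $v_i$, acts by an order isomorphism on every subtree lying in $T^r(v_i)$, and acts trivially on every subtree disjoint from $T(v_i)$ or lying in the left subtree of $v_i$.

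With the relabelling understood, I would first verify that $\psi_i$ maps $\mathcal{HR}_n$ to itself. Since only labels in $T(v_i)$ move, and only order-isomorphically inside $T^r(v_i)$, both the min--max property and the HR-property are immediate at every node other than $v_i$; at $v_i$, fact (b) shows the node switches between being a $\min$-node and a $\max$-node, its new label being the former opposite extremum of $T(v_i)$, and the HR-property there survives because the node of $T^r(v_i)$ that previously carried the second-smallest (resp.\ second-largest) label of $R_i$ now carries $\min T(v_i)$ (resp.\ $\max T(v_i)$). For the involution claim, after one application of $\psi_i$ the label set of $U_i$ is again $R_i$ while the type of $v_i$ has flipped, so the second application uses the cyclic shift of $R_i$ in the opposite direction, namely $\sigma_i^{-1}$; the two applications therefore compose to the identity on $U_i$, and nothing outside $U_i$ moved, so $\psi_i^2=\mathrm{id}$.

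For the commutation $\psi_i\psi_j=\psi_j\psi_i$ with $i\ne j$, I would split into cases according to the relative position of $v_i$ and $v_j$, using that the subtrees $T(v_i)$ and $T(v_j)$ are either disjoint or nested. If they are disjoint, $\psi_i$ reads and writes only inside $T(v_i)$ and $\psi_j$ only inside $T(v_j)$, so the two operators commute. If (interchanging $i$ and $j$ if necessary) $v_j$ is a proper ancestor of $v_i$ with $v_i$ in the \emph{left} subtree of $v_j$, then $U_i$ is disjoint from $U_j$, the operator $\psi_i$ fixes the label of $v_j$ and the label multiset of $T(v_j)$ --- hence the type of $v_j$ and the set $R_j$ --- while $\psi_j$ leaves untouched everything that $\psi_i$ reads or writes; so each operator performs the same relabelling whether or not the other is applied first, and they commute.

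The remaining case --- the one I expect to be the main obstacle --- is $v_j$ a proper ancestor of $v_i$ with $v_i$ in the \emph{right} subtree of $v_j$, so that $U_i\subseteq T^r(v_j)\subseteq U_j$ and $\psi_j$ genuinely alters the labels that $\psi_i$ sees. The key observation is that $w_j\notin R_i$, because $v_j\notin U_i$; hence by fact (a) the restriction $\sigma_j|_{R_i}$ is an order isomorphism from $R_i$ onto $\sigma_j(R_i)$, so $\psi_j$ relabels $T(v_i)$ order-isomorphically and in particular does not change the type of $v_i$. Since cyclic shifts are natural under order isomorphisms (an order isomorphism conjugates the upward shift of one set to the upward shift of the other, and likewise downward), the shift that $\psi_i$ performs after $\psi_j$ is the conjugate $\sigma_j|_{R_i}\circ\sigma_i\circ(\sigma_j|_{R_i})^{-1}$ of the shift it would perform first. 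It then remains to compare $\psi_i\psi_j$ and $\psi_j\psi_i$ on the three zones $U_i$, $U_j\setminus U_i$, and the complement of $U_j$: using that $\sigma_i$ and $\sigma_j$ are bijections of their respective label sets, one checks that both composites apply $\sigma_j|_{R_i}\circ\sigma_i$ to the original labels on $U_i$, apply $\sigma_j$ to the original labels on $U_j\setminus U_i$, and fix everything else. This final comparison is the only place where genuine computation is needed; everything else is bookkeeping about which regions each operator can see.
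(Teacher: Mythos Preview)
The paper does not actually prove this lemma: it is stated with the attribution ``[Hetyei and Reiner]'' and then used without further justification, the reader being referred implicitly to~\cite{HR98}. So there is no ``paper's own proof'' to compare against.

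Your argument is a correct, self-contained proof. The two observations you isolate are exactly the right ones: (i)~since $\psi_i$ preserves the tree shape and the node $v_i$ is determined by its position in the left-first traversal, the operator $\psi_i$ always acts on the same physical node regardless of which operators were applied before; and (ii)~the relabelling $\tau_i$ is order-preserving on $R_i\setminus\{w_i\}$, which is what makes the min--max and HR properties survive at every node of $T^r(v_i)$ and, crucially, what forces $\sigma_j|_{R_i}$ to be an order isomorphism in the nested case. Your naturality step --- that an order isomorphism conjugates the cyclic shift of one ordered set to the cyclic shift of the other in the same direction --- is the clean way to see why $\psi_i$ after $\psi_j$ performs $\sigma_j|_{R_i}\circ\sigma_i\circ(\sigma_j|_{R_i})^{-1}$ on $U_i$, and the resulting zone-by-zone bookkeeping on $U_i$, $U_j\setminus U_i$, and the complement is routine and correct. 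The involution claim and the two easy commutation cases (disjoint subtrees, and $v_i$ in the left subtree of $v_j$) are handled as one would expect.

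One small point you might make explicit for a reader: in the nested case you silently use that $\psi_i$ does not alter the label set $R_j$ or the type of $v_j$ (because $U_i\subseteq U_j$ and $\sigma_i$ is a bijection of $R_i\subseteq R_j$, while $v_j\notin U_i$), so that $\psi_j$ applies the \emph{same} $\sigma_j$ whether it comes before or after $\psi_i$. You use this fact, and it is true, but stating it would make the final zone comparison fully transparent.
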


 For any subset $S=\{s_1,\ldots, s_r\}$ of $[n]$ we define the operator   $\psi_S$ on $\mathcal{HR}_n$ by 
$$
\psi_S(T)=\psi_{s_r}\cdots\psi_{s_1}(T)\qquad \textrm{for}\quad T\in \mathcal{HR}_n.
$$
The group $\mathbb{Z}_2^n$ acts on $\hrn$ via the operators
 $\psi_S$, $S\subseteq[n]$, and  denote the orbit of $T$ under the 
\emph{HR--action} by  
$$
\mathrm{Orb}(T)=\{g(T):g\in\mathbb{Z}_2^n\}.
$$

For a binary tree $T$, we denote by  $\leaf(T)$  the number of leaves of $T$.
\begin{fact}\label{F-1} 
If  a binary tree $T$ has $i-1$ nodes with two children, then 
$\leaf(T)=i$.
\end{fact}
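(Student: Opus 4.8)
The plan is to prove this identity by the classical double counting of the edges of $T$. Write $n$ for the number of nodes of $T$ and partition the nodes according to how many children they have: let $\leaf(T)$ denote the number of nodes with no child, let $d$ denote the number of nodes with exactly one child, and let $b$ denote the number of nodes with two children, so that $n = \leaf(T) + d + b$.

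First I would count the edges of $T$ in two ways. Since $T$ is a tree on $n$ nodes, it has exactly $n-1$ edges. On the other hand, every edge of $T$ joins a node to one of its children, and distinct edges correspond to distinct node--child incidences; hence the number of edges equals $\sum_{x}(\text{number of children of }x) = 0\cdot\leaf(T) + 1\cdot d + 2\cdot b = d + 2b$. Equating the two expressions gives $n - 1 = d + 2b$.

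Then I would eliminate $d$: substituting $n = \leaf(T) + d + b$ into $n - 1 = d + 2b$ yields $\leaf(T) + d + b - 1 = d + 2b$, i.e.\ $\leaf(T) = b + 1$. Since by hypothesis $b = i - 1$, we conclude $\leaf(T) = i$, as claimed. Equivalently, one could argue by induction on $n$, attaching one node at a time and checking that the difference $\leaf(T) - b$ is invariant under each such attachment, but the edge count makes this bookkeeping unnecessary.

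The only point requiring a little care --- rather than a genuine obstacle --- is that the binary trees considered here need not be \emph{full}: a node may have exactly one child, which is precisely why the auxiliary quantity $d$ is introduced and then removed. Once this is taken into account, the statement is a one-line consequence of the relation $\#\text{edges} = \#\text{nodes} - 1$ valid for any tree.
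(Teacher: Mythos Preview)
Your argument is correct. The paper does not give a formal proof of this fact; it only offers the intuitive remark that ``each time we add a node with two children, we replace one existing leaf by two new child nodes, increasing the number of leaves by~$1$. Starting with one leaf (the root), after adding $i-1$ such nodes we get $i$ leaves.'' This is precisely the inductive alternative you mention at the end. Your main argument --- the double counting of edges via $n-1=d+2b$ and $n=\leaf(T)+d+b$ --- is a genuinely different and cleaner route: it avoids any discussion of how the tree is built up and handles the one-child nodes transparently by introducing and then eliminating~$d$. The paper's remark, by contrast, glosses over what happens when one attaches a single child (the leaf count is unchanged, $b$ is unchanged), which your formulation makes explicit. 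Either approach is adequate for such an elementary fact, but your edge count is the more self-contained of the two.
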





\begin{remark}
Intuitively, each time we add a node with two children, we replace one existing leaf
by two new child nodes, increasing the number of leaves by $1$. 
Starting with one leaf (the root), after adding $i-1$ such nodes we get $i$ leaves.
\end{remark}

\begin{prop}\label{p3}
Let $T\in \mathcal{HR}_n$. 
 If $\leaf(T)=i$, then $w(T)$ has at least $i$ alternating runs.
\end{prop}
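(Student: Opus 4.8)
The plan is to match each node of $T$ that has two children with a position at which $w(T)$ changes direction, and then to invoke Fact~\ref{F-1}. Indeed, if $\leaf(T)=i$ then $T$ has exactly $i-1$ nodes with two children; since $\run(w(T))$ is one plus the number of indices at which $w(T)$ changes direction, it suffices to produce $i-1$ \emph{distinct interior} indices at which $w(T)$ changes direction.

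First I would use the fact that the left-first order reading $w(T)=w_1\cdots w_n$ is precisely the in-order (symmetric) traversal of $T$, as is transparent from the recursive reconstruction of $T_w$. Hence, for any node $v$ with label $c$: in $w(T)$ the letter $c$ is immediately preceded by the label of the rightmost node of the left subtree of $v$ (when that subtree is nonempty) and immediately followed by the label of the leftmost node of the right subtree of $v$ (when that subtree is nonempty). In particular, if $v$ has two children and its label is $w_p$, then $2\le p\le n-1$, and both $w_{p-1}$ and $w_{p+1}$ are labels occurring in the subtree $T(v)$ rooted at $v$ (and are of course both different from $w_p$).

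Next I would apply the min--max property to $v$: its label $w_p$ is either the smallest or the largest among all labels in $T(v)$. If $w_p$ is the smallest, then $w_{p-1}>w_p<w_{p+1}$, so $p$ is a valley; if $w_p$ is the largest, then $w_{p-1}<w_p>w_{p+1}$, so $p$ is a peak. Either way $w(T)$ changes direction at $p$. Letting $v$ range over the $i-1$ nodes with two children produces $i-1$ such indices, and they are pairwise distinct since distinct nodes occupy distinct positions in the in-order reading. Therefore $w(T)$ changes direction at least $i-1$ times, whence $\run(w(T))\ge 1+(i-1)=i$.

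There is no serious obstacle here; the only points needing a little care are (a) that having \emph{both} children --- rather than just one --- is exactly what forces the two in-order neighbours of $v$ to lie inside $T(v)$, which is what makes the min--max comparison applicable, and (b) the bookkeeping that the $i-1$ chosen indices are interior and pairwise distinct. It is worth noting that only the min--max property is used, not the full HR-property, so the statement in fact holds for arbitrary min--max trees.
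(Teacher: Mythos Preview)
Your proof is correct and follows essentially the same approach as the paper: use Fact~\ref{F-1} to count the two-child nodes, observe that each such node is a peak or valley of $w(T)$, and conclude. You spell out in detail why a two-child node yields a direction change (via the in-order neighbours lying in $T(v)$ and the min--max property), whereas the paper simply asserts this; your observation that only the min--max property is needed, not the full HR-property, is also accurate and not stated in the paper.
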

\begin{proof}
By Fact~\ref{F-1}, if
$T$ has $i$ leaves, then  $T$ has $i-1$ nodes with two children. Since $T$ is a HR--tree, every node with two children in  $T$ must
 be a valley or peak in $w(T)$. According to the definition of alternating run, a permutation changes direction at each valley or peak, hence the permutation $w(T)$ has at least $i$ alternating runs.
\end{proof}
\begin{remark} It is possible that $\run(w)>\leaf(T_w)$. For example,
the permutation $w=3245617$ has two valleys at positions 2, 6 and one peak at 5, hence $\run(w)=4$, whereas $\leaf(T_w)=3$.
\end{remark}
\begin{remark} 
The HR--action is different from B\'ona's group action in \cite{Bo21}. 
For example, if $\pi = 562314$, the HR--action gives $513426$ (see Figure~2), 
while B\'ona's action of $\pi_2$ gives $514362$. 
Also, when $n=4$, B\'ona's action divides $\mathfrak{S}_4$ into two orbits, 
whereas the HR--action yields five orbits.
\end{remark}
\section{Alternating runs in Type A}\label{sec:typeA}
\subsection{Alternating subsequences}
For our approach, it is convenient to first examine the closely related problem of alternating subsequences, introduced by Stanley~\cite{St08}.
 Let $\pi=\pi_1\pi_2\ldots\pi_n\in\S_n$. A subsequence
  $\pi_{i_1}\pi_{i_2}\ldots\pi_{i_k}$ of $\pi$, where $i_1<\cdots <i_k$, is an \emph{alternating subsequence} 
  if 
  $$
  \pi_{i_1}>\pi_{i_2}<\pi_{i_3}>\cdots\pi_{i_k}.
  $$ 
Let  $\as(\pi)$ be the length of the \emph{longest alternating subsequences} of $\pi$. Let 
$a_k(n)=\#\{\pi\in\S_n:\as(\pi)=k\}$. It is easy to see that 
\begin{equation}\label{as:run}
\as(\pi)=\begin{cases}
\mathrm{run}(\pi), & \textrm{if} \;\;\pi_1<\pi_2,\\
\mathrm{run}(\pi)+1, & \textrm{if} \;\;\pi_1>\pi_2.
\end{cases}
\end{equation}

\begin{lemma}\label{L-1}
For every $\pi\in\S_n$, there exists a unique tree $T_{\check{\pi}}$ in the HR--orbit $\Orb(T_\pi)$, with $\check{\pi}\in\S_n$, such that 
\[
\as(\check{\pi})=\leaf(T_\pi).
\]
\end{lemma}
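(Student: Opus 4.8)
The plan is to find, within the HR--orbit $\Orb(T_\pi)$, the unique tree whose word has the maximal possible number of alternating runs compatible with the tree shape, and to do so by making a canonical choice at every two-child node. Recall from Proposition~\ref{p3} and Fact~\ref{F-1} that every two-child node of an HR--tree $T$ is forced to be a peak or a valley in $w(T)$, so $\run(w(T))\ge\leaf(T)=i$; the defect measured in the remark after Proposition~\ref{p3} comes precisely from one-child nodes that happen to also be peaks or valleys. So I would aim to produce a representative $T_{\check\pi}$ in which \emph{no} one-child node is a turning point and in which $\check\pi_1<\check\pi_2$, so that by \eqref{as:run} we get $\as(\check\pi)=\run(\check\pi)=\leaf(T_\pi)$ exactly.

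First I would analyze how the reading word changes under a single operator $\psi_i$. The key local observation is that $\psi_i$ only relabels the subtree $U_i=\{v_i\}\cup V(T^r(v_i))$ by an order-preserving bijection $\tau_i$ onto itself, moving $w_i$ to the extreme (max if $v_i$ is a min-node, min if a max-node); in the reading word this replaces a factor and, crucially, toggles whether $v_i$ sits as a valley or as a peak relative to its neighbours while preserving all turning-point behaviour strictly inside $T^r(v_i)$ and in the rest of the word. I would make this precise: running $\psi_i$ at a one-child node $v_i$ (with nonempty right subtree but empty left subtree, or symmetrically) lets me choose the "direction" of that node, and hence eliminate a spurious turning point there; and the operators at distinct nodes commute and are involutions (the Hetyei--Reiner lemma quoted above), so these local choices can be made independently.

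The construction of $\check\pi$ then proceeds top-down (or equivalently by choosing a subset $S\subseteq[n]$ and setting $T_{\check\pi}=\psi_S(T_\pi)$): for each node that has exactly one child, apply $\psi$ there if and only if its current local profile makes it a turning point, thereby "straightening" it; two-child nodes are left to be turning points as they must be, and one should check their forced orientation is consistent with the chosen orientations of their one-child ancestors/neighbours. After this straightening, the only turning points of $\check\pi$ are exactly the two-child nodes, of which there are $\leaf(T_\pi)-1$, giving $\run(\check\pi)=\leaf(T_\pi)$; a final single adjustment at the appropriate node near the start of the word (the root's leftmost descendant) ensures $\check\pi_1<\check\pi_2$ so that $\as(\check\pi)=\run(\check\pi)$ by \eqref{as:run}. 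Uniqueness follows because the orientation at every node is forced: two-child nodes have no freedom, and for each one-child node exactly one of the two $\psi$-states makes it a non-turning point, so the straightened tree is the unique orbit element attaining the bound $\as\ge\leaf$.

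The main obstacle I anticipate is the bookkeeping of interactions between adjacent nodes in the reading word: when I straighten a one-child node, I change the value at $v_i$, which is a neighbour (in the word) of the first letter of $T^r(v_i)$ and of the last letter before the $U_i$-block, so I must verify that the local monotonicity I impose at $v_i$ does not re-create a turning point at an adjacent position — in other words, that the extremal relabelling of $\psi_i$ is always "compatible" in the sense that $w_i$ lands on the correct side. This compatibility is exactly what the HR-property is designed to guarantee (the right subtree always contains the opposite extremum), so the argument should go through, but stating the local word-surgery lemma cleanly and checking all the boundary cases (empty $w_2$ or $w_4$, the root, the final letter) is where the real work lies.
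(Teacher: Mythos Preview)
Your instinct is right—the orbit representative $\check\pi$ is obtained by choosing the min/max status at every inner node so that the only turning points are the two-child nodes and the word begins with an ascent. But your construction, as written, has a gap: you only apply $\psi$ at one-child nodes (``for each node that has exactly one child, apply $\psi$ there if and only if\ldots''), leaving the two-child nodes untouched and hoping their orientation is ``consistent''. It need not be. The HR--action $\psi_i$ at a two-child node is nontrivial and flips that node between peak and valley; if, say, the first two-child node in $T_\pi$ happens to be a min-node (valley), then no amount of straightening at one-child nodes will produce an initial ascent, and your ``final single adjustment at the appropriate node near the start'' cannot repair this either—flipping that one two-child node to a peak forces you to reverse the direction of the entire next block, then the next two-child node, and so on. The adjustment is not single; it cascades through all the two-child nodes.

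The paper avoids this by specifying the target configuration \emph{globally} rather than straightening locally: list the two-child nodes by position $i_1<\cdots<i_k$, apply HR--actions so that they become alternately max, min, max, \ldots\ (this is the step you are missing), and then, writing $\pi'=w_1\,\pi'_{i_1}\,w_2\cdots w_k\,\pi'_{i_k}\,w_{k+1}$, force every one-child node in $w_j$ to be a min-node when $j$ is odd and a max-node when $j$ is even. Because each $\psi_i$ flips the min/max status of exactly the node $v_i$ and nothing else, this target state is reached by a single well-defined $\psi_S$, with no interaction bookkeeping needed. Your uniqueness argument is essentially fine once you add the observation that the constraint $\as=\leaf$ forces $\check\pi_1<\check\pi_2$ (since $\run\ge\leaf$ rules out the other branch of \eqref{as:run}), which in turn forces the two-child nodes to be peak, valley, peak, \ldots\ in that order—so they, too, have exactly one admissible state.
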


\begin{proof}
By definition, the HR–action preserves the number of inner nodes with two children of a binary tree.
Moreover,  It follows from  Proposition~\ref{p3} and \eqref{as:run} that 
\[
\as(\sigma)\geq \leaf(T_\pi)\]
for every $T_\sigma\in\Orb(T_\pi)$, where $\sigma\in \S_n$.

Let $\pi_{i_1}\pi_{i_2}\cdots\pi_{i_k}$ be the sequence of labels of the nodes with two children in $T_\pi$, listed in increasing order of their positions: $i_1<i_2<\cdots<i_k$.  
For each $1\le j\le k$, by applying HR--action if necessary, we may transform $T_\pi$ into a tree $T_{\pi'}\in\Orb(T_\pi)$, with $\pi'\in\S_n$, such that 
\[
\pi'_{i_1},\pi'_{i_2},\ldots,\pi'_{i_k}
\]
are alternatively the labels of max-nodes and min-nodes of $T_{\pi'}$.

Write
\[
\pi' \;=\; w_1\,\pi'_{i_1}\,w_2\,\pi'_{i_2}\,\cdots\, w_k\,\pi'_{i_k}\, w_{k+1},
\]
where each $w_j$ is (possibly empty) a consecutive block of letters of $\pi'$.
Now apply HR--action to $T_{\pi'}$ so that every node whose label lies in $w_j$ becomes a min-node when $j$ is odd and a max-node when $j$ is even, for $1\le j\le k+1$.  
Let $T_{\check{\pi}}$ be the resulting tree, with $\check{\pi}\in\S_n$.  
Combining the above operation and \eqref{as:run}, we obtain
\(\as(\check{\pi})\;=\;\leaf(T_\pi)\).

Finally, $T_{\check{\pi}}$ is the unique tree in $\Orb(T_\pi)$ satisfying this identity.  
Indeed, any other pattern of HR--actions on the nodes in the blocks $w_j$, $1\le j\le k+1$, would necessarily produce at least one additional alternating run in the corresponding permutation.
\end{proof}


It is clear that the permutation $\check{\pi}$ in Lemma~\ref{L-1} satisfies
\begin{equation}\label{as:lpv}
    \as(\check{\pi})
   \;=\;
   \min\{\as(\sigma):\, T_\sigma\in\Orb(T_\pi)\}=\leaf(T_\pi).
\end{equation}
In what follows we shall use $\check{\pi}$ to denote this unique permutation with 
$T_{\check{\pi}}\in\Orb(T_\pi)$.

For example, let $\pi=513426$.  
The subsequence of labels of the nodes with two children in $T_\pi$ is $\pi_2\pi_5$.  
By applying HR--action (if necessary), we may transform $T_\pi$ into a tree $T_{\pi'}$ for which 
$\pi_2'$ is a max-node and $\pi_5'$ is a min-node; the resulting permutation is $\pi'=562314$.

\begin{figure}[ht]
\begin{tikzpicture}
\draw (-4,0)--(-5,-1);
\draw (-4,0)--(-3,-1)--(-2,-2);
\draw (-3,-1)--(-4,-2)--(-3,-3);
\fill (-4,0) circle (2pt);
\node[above] at (-4,0) {$1$};
\node[left] at (-5,-1) {$5$};
\node[above] at (-3,-1) {$2$};
\node[left] at (-4,-2) {$3$};
\node[below] at (-3,-3) {$4$};
\node[below] at (-2,-2) {$6$};
\fill (-5,-1) circle (2pt);
\fill (-3,-1) circle (2pt);
\fill (-2,-2) circle (2pt);
\fill (-4,-2) circle (2pt);
\fill (-3,-3) circle (2pt);
\node[below] at (-3,-3.5) {$T_{5\red{1}34\red{2}6}$};
\draw[][->,very thick](-1.5,-1.5)--(-0.5,-1.5) node[midway, above]{$\psi_2$};
\draw (1,0)--(0,-1);
\draw (1,0)--(2,-1)--(3,-2);
\draw (2,-1)--(1,-2)--(2,-3);
\fill (1,0) circle (2pt);
\node[above] at (1,0) {$6$};
\node[left] at (2.25,-0.7) {$1$};
\node[above] at (-0.25,-1.25) {$5$};
\node[left] at (3.25,-2.25) {$4$};
\node[below] at (1,-2) {$2$};
\node[below] at (2,-3) {$3$};
\fill (1,-2) circle (2pt);
\fill (0,-1) circle (2pt);
\fill (2,-1) circle (2pt);
\fill (3,-2) circle (2pt);
\fill (2,-3) circle (2pt);
\node[below] at (2,-3.5) {$T_{5\red{6}23\red{1}4}$};
\draw[][->,very thick](3.5,-1.5)--(4.5,-1.5) node[midway, above]{$\psi_3$};
\draw (6,0)--(5,-1);
\draw (6,0)--(7,-1)--(8,-2);
\draw (7,-1)--(6,-2)--(7,-3);
\fill (6,0) circle (2pt);
\node[above] at (6,0) {$6$};
\node[left] at (7.25,-0.7) {$1$};
\node[above] at (4.75,-1.25) {$5$};
\node[left] at (8.25,-2.25) {$4$};
\node[below] at (6,-2) {$3$};
\node[below] at (7,-3) {$2$};
\fill (6,-2) circle (2pt);
\fill (5,-1) circle (2pt);
\fill (7,-1) circle (2pt);
\fill (8,-2) circle (2pt);
\fill (7,-3) circle (2pt);
\node[below] at (7,-3.5) {$T_{5\red{6}32\red{1}4}$};
\end{tikzpicture}
\caption{ A HR--action:  $\psi_3\psi_2(T_{5\red{1}34\red{2}6})=T_{5\red{6}32\red{1}4}$.}\label{MNT2}
\end{figure}

We write
\[
\pi' = w_1\,6\, w_2\, 1\, w_3,
\qquad\text{where } w_1=5,\; w_2=23,\; w_3=4.
\]
Applying HR--action to $T_{\pi'}$, we turn all nodes whose labels lie in $w_1$ and $w_3$ into min-nodes,  
and all nodes whose labels lie in $w_2$ into max-nodes.  
This yields the tree $T_{\check{\pi}}$ with
\[
\check{\pi} = 563214.
\]
See Figure~\ref{MNT2}.

\begin{defi}
Let $\pi=\pi_1\pi_2\ldots\pi_n\in\mathfrak{S}_n$.  
\begin{itemize}
\item An index $i\in\{2,3,\ldots,n-1\}$ is a \emph{peak} (resp.\ a \emph{valley}) of $\pi$ if  
\[
\pi_{i-1}<\pi_i>\pi_{i+1}
\quad
(\text{resp. }\pi_{i-1}>\pi_i<\pi_{i+1}).
\]

\item An index $i\in[n-1]$ is a \emph{left-peak} of $\pi$ with $\pi_0=0$, if  
\[
\pi_{i-1}<\pi_i>\pi_{i+1}.
\]
\end{itemize}
\end{defi}

For $\pi\in\S_n$, we denote by $\mathrm{Pk}(\pi)$, $\mathrm{Val}(\pi)$, and $\Lpk(\pi)$ the sets of peaks, valleys, and left-peaks of $\pi$, respectively.  
We also set
\begin{subequations}
\begin{equation}  
\LPV(\pi)=\Lpk(\pi)\cup\Val(\pi),
\end{equation}
the set of left-peaks and valleys of $\pi$.

Since a permutation changes direction only at a peak or a valley, we have
\begin{equation}  
\run(\pi)=|\mathrm{Pk}(\pi)|+|\mathrm{Val}(\pi)|+1.
\end{equation}
Moreover, every node with two children in $T_\pi$ corresponds to a peak or a valley of $\pi$, hence
\begin{equation}
    \leaf(T_\pi)\le |\LPV(\pi)|+1.
\end{equation}
It is also immediate from \eqref{as:run} that
\begin{equation}\label{eq:as:Lpv}
\as(\pi)=|\LPV(\pi)|+1.
\end{equation}
\end{subequations}

\begin{lemma}\label{L-2} 
Let $S\subseteq[n-1]$ and  $\pi\in\S_n$.  If $\LPV(\check{\pi})\subseteq S$, then there is a unique $T_\sigma\in\mathrm{Orb}(T_\pi)$ such that $\LPV(\sigma)=S$.
\end{lemma}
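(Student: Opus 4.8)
\emph{Setup.}
The plan is to realise the assignment $\Phi\colon \Orb(T_\pi)\to 2^{[n-1]}$, $T_\sigma\mapsto\LPV(\sigma)$, as an \emph{affine $\mathbb{F}_2$--linear bijection} onto the interval $\{S:\LPV(\check\pi)\subseteq S\subseteq[n-1]\}$. Put $\ell=\leaf(T_\pi)$ and let $P$ be the set of in--order positions occupied by the two--children nodes of $T_\pi$; by Fact~\ref{F-1}, $|P|=\ell-1$. A node with two children carries the minimum or maximum of its subtree, so in every tree $T_\sigma\in\Orb(T_\pi)$ (again an HR--tree of the same shape) such a position is a valley or a peak of $\sigma$, whence $P\subseteq\LPV(\sigma)$ for all $T_\sigma\in\Orb(T_\pi)$; in particular $P\subseteq\LPV(\check\pi)$, and combining with $|\LPV(\check\pi)|=\as(\check\pi)-1=\ell-1$ (Lemma~\ref{L-1} and \eqref{eq:as:Lpv}) gives $\LPV(\check\pi)=P$. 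Thus the target interval is exactly $\{S:P\subseteq S\subseteq[n-1]\}$, of cardinality $2^{(n-1)-(\ell-1)}=2^{n-\ell}$. On the other side, on any tree in the orbit $\psi_i$ is the identity whenever the node at in--order position $i$ is a leaf, so the stabiliser contains a subgroup $\cong\Z_2^{\ell}$ and $|\Orb(T_\pi)|\le 2^{n-\ell}$; equality will drop out below.

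\emph{Effect of one move on the descent set.}
Encode descent sets of permutations of $[n]$ as vectors of $\mathbb{F}_2^{\,n-1}$ with standard basis $e_1,\dots,e_{n-1}$ (set $e_n:=0$). The key computation is that if $v$ is an inner node of $T_\pi$ at in--order position $q$ — so $v$ has a nonempty right subtree spanning positions $[q+1,b]$, and possibly a left subtree ending at $q-1$ — then applying the operator acting at position $q$ changes the descent vector of any tree in the orbit by the \emph{fixed, shape--determined} vector $\mathbf f(v)=e_q$ if $v$ has only a right child, and $\mathbf f(v)=e_{q-1}+e_q$ if $v$ has two children. Indeed the move relabels only $\{v\}\cup T^r(v)$, via an order--isomorphism of the value--block $R=\{\pi_q,\dots,\pi_b\}$ that sends $\pi_q$ to the extremum of $R$ opposite to its $\min/\max$ rôle and slides the rest monotonically; hence the pairwise order among positions $q+1,\dots,b$ is untouched, the two entries flanking the block are extrema of strictly larger subtrees and so lie outside $[\min R,\max R]$ (their comparisons do not move), while the comparison at position $q$ always flips and the comparison at position $q-1$ flips precisely when $v$ has a left subtree. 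Consequently, for every subset $S$ of inner positions and every tree in the orbit, applying $\psi_S$ adds $\sum_{v\in S}\mathbf f(v)$ to the descent vector.

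\emph{Linear independence and conclusion.}
Next, $\{\mathbf f(v):v\text{ an inner node of }T_\pi\}$ is linearly independent over $\mathbb{F}_2$: the coordinate $e_j$ occurs in $\mathbf f(v)$ only when $v$ is the node at position $j$, or when $v$ is the node at position $j+1$ and it has two children; since the node in the last in--order position has no right child and is therefore a leaf, coordinate $n-1$ forces the node at position $n-1$ out of any vanishing combination, then coordinate $n-2$ forces out the node at position $n-2$, and so on down to coordinate $1$. Hence the $n-\ell$ vectors $\mathbf f(v)$ are independent, the $2^{n-\ell}$ permutations $\psi_S(T_\pi)$ have distinct descent vectors, $|\Orb(T_\pi)|=2^{n-\ell}$, and $S\mapsto\psi_S(T_\pi)$ parametrises the orbit. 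Now pass to $\LPV$: the map $\Lambda\colon\mathbb{F}_2^{\,n-1}\to\mathbb{F}_2^{\,n-1}$, $(d_1,\dots,d_{n-1})\mapsto(d_0{+}d_1,\dots,d_{n-2}{+}d_{n-1})$ with $d_0:=0$, is a linear bijection which carries the descent vector of any permutation to the indicator of its $\LPV$--set. Therefore $\LPV(\psi_S(T_\pi))=\LPV(\pi)+\sum_{v\in S}\Lambda\mathbf f(v)$, and since $\Lambda e_q=e_q+e_{q+1}$ each $\Lambda\mathbf f(v)$ equals $e_q+e_{q+1}$ or $e_{q-1}+e_{q+1}$, hence is supported on $[n-1]\setminus P$. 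These $n-\ell$ independent vectors thus form a basis of the $(n-\ell)$--dimensional space $\mathbb{F}_2^{\,[n-1]\setminus P}$; writing $\LPV(\pi)=P\sqcup C$ with $C\subseteq[n-1]\setminus P$, the family $\{\LPV(\psi_S(T_\pi)):S\}$ equals $\{\,P\sqcup(C\triangle B):B\subseteq[n-1]\setminus P\,\}=\{S:P\subseteq S\subseteq[n-1]\}$, each value attained exactly once. As $\LPV(\check\pi)=P$, this is precisely the lemma.

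\emph{Where the difficulty is.}
The delicate point is the invariance claim in the second step: that relabelling one right subtree perturbs the descent word at only one or two shape--determined positions, uniformly over the whole orbit. This is what makes the descent--vector change \emph{linear}, and hence reduces the statement to linear algebra; getting it exactly right (including the behaviour at the two block boundaries and the use of the HR--property) is the main work. Granting it, the remaining crux is the linear independence of the $\mathbf f(v)$, handled by the short downward sweep above, after which the cardinality bookkeeping of the first step converts independence into the simultaneous existence and uniqueness the lemma asks for.
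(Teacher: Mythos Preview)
Your argument is correct and takes a genuinely different route from the paper's. The paper proves Lemma~\ref{L-2} by an explicit construction: given $S=\{s_1<\cdots<s_k\}$, it prescribes which nodes of $T_{\check\pi}$ must be flipped to max-nodes and which to min-nodes so that the resulting permutation has $\LPV$ equal to $S$, and then asserts uniqueness by inspection. Your proof instead encodes descent sets as vectors in $\mathbb{F}_2^{\,n-1}$, shows that each generator $\psi_q$ shifts the descent vector by a fixed shape-determined increment $\mathbf f(v_q)\in\{e_q,\,e_{q-1}+e_q\}$, verifies these increments are linearly independent by a downward sweep (using that the last in-order node is a leaf in an HR--tree), and then pushes everything through the linear isomorphism $\Lambda$ to the $\LPV$ side, where the images land in and span $\mathbb{F}_2^{[n-1]\setminus P}$. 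This yields existence and uniqueness simultaneously, and as a by-product pins down $|\Orb(T_\pi)|=2^{\,n-\leaf(T_\pi)}$ and $\LPV(\check\pi)=P$ exactly, facts the paper only obtains (or uses) elsewhere via Lemma~\ref{L-1} and Lemma~\ref{T-1}. The trade-off is that the paper's argument is shorter and hands you the desired $\sigma$ explicitly, while yours explains \emph{why} the map $T_\sigma\mapsto\LPV(\sigma)$ is a bijection onto the upper interval, and gives a tighter justification of uniqueness than the paper's ``it is clear''.

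One small wording issue worth cleaning up: in your second step you write that ``the two entries flanking the block are extrema of strictly larger subtrees,'' but in the two-child case the left flank $\sigma_{q-1}$ is \emph{not} such an extremum---it lies in the left subtree of $v$, and the reason the comparison at $q-1$ flips is precisely the HR--property (the opposite extremum of $T(v)$ sits in $T^r(v)$, so the new label at $v$ dominates $\sigma_{q-1}$). You do state the correct conclusion (``the comparison at position $q-1$ flips precisely when $v$ has a left subtree'') and flag the HR--property later, but the intermediate sentence as written only applies to the right flank $\sigma_{b+1}$ and to the left flank in the one-child case. Tightening that sentence would make the key step airtight.
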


\begin{proof}
Let  $S=\{s_1<s_2<\cdots<s_k\}$. We construct a permutation $\sigma\in \S_n$ with $\LPV(\sigma)=S$ by the following process. Because $\sigma_0=0$, so $s_1$ should be a left peak in $\sigma$, since the valleys and peaks in a permutation are alternating,  using HR--action on $T_{{\pi}^\ast}$, we make $s_i$ a $\max$-node if $i$ is odd, and make $s_i$ a $\min$-node if $i$ is even. And make every node whose position is between $s_{i-1}$ and $s_i$ a min (resp. max) node if $i$ is odd (resp. even), here $s_0=0$ and {$s_{k+1}=n+1$}. Define $T_\sigma$ to be the corresponding HR--tree. It is clear that $T_\sigma$ is the only tree  in $\mathrm{Orb}(T_\pi)$ satisfying $\LPV(\sigma)=S$.
\end{proof}

\begin{lemma}\label{T-1}
For  $\pi\in\S_n$, we have
\begin{align}\label{k-1}
\sum_{T_\sigma\in\mathrm{Orb}(T_\pi)}x^{\as(\sigma)}=x^{\leaf(T_\pi)}(1+x)^{n-\leaf(T_\pi)}.
\end{align}
\end{lemma}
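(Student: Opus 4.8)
The plan is to analyze the orbit $\mathrm{Orb}(T_\pi)$ by understanding exactly which statistics the HR--action can and cannot change. Set $i=\leaf(T_\pi)$, so by Fact~\ref{F-1} the tree $T_\pi$ has $i-1$ nodes with two children, and this number is invariant along the whole orbit. The $n-\leaf(T_\pi)$ remaining nodes are either leaves or nodes with exactly one child; these are precisely the nodes $v$ for which the HR--operator $\psi_j$ (with $w_j$ the label of $v$) is non-trivial in the sense that flipping $v$ between min-node and max-node status changes the alternating-run count of $w(T)$ by $\pm 1$. More precisely, I would first establish the structural dictionary: a node with two children always contributes a peak or valley no matter how the orbit element is chosen (this is the HR-property, already used in Proposition~\ref{p3}), whereas each node with at most one child can independently be made either to contribute an extra direction change or not, and these choices are governed by independent commuting involutions $\psi_j$.

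Next I would make the counting precise. Let $N$ be the set of the $n-i$ ``free'' nodes (those with at most one child), and for each subset $A\subseteq N$ let $T_A$ denote the tree in the orbit obtained from the canonical minimal tree $T_{\check\pi}$ of Lemma~\ref{L-1} by applying the HR--action exactly at the nodes in $A$ so that each node in $A$ becomes a ``bad'' node (one that forces an additional alternating run) and each node in $N\setminus A$ stays ``good''. Using Lemma~\ref{L-1}, Lemma~\ref{L-2}, and \eqref{eq:as:Lpv}, I would argue that the map $A\mapsto T_A$ is a bijection from $2^N$ onto $\mathrm{Orb}(T_\pi)$, and that $\as(w(T_A)) = \leaf(T_\pi) + |A|$. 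Granting this, the left side of \eqref{k-1} becomes
\[
\sum_{A\subseteq N} x^{\leaf(T_\pi)+|A|}
= x^{\leaf(T_\pi)}\sum_{A\subseteq N} x^{|A|}
= x^{\leaf(T_\pi)}(1+x)^{|N|}
= x^{\leaf(T_\pi)}(1+x)^{n-\leaf(T_\pi)},
\]
which is exactly the claimed identity.

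The cleanest route is probably to phrase this in terms of the $\LPV$ statistic rather than $\as$ directly, using \eqref{eq:as:Lpv}: it suffices to show that $\{\LPV(\sigma): T_\sigma\in\mathrm{Orb}(T_\pi)\}$ is exactly the collection of all subsets $S\subseteq[n-1]$ with $\LPV(\check\pi)\subseteq S$ and $|S\setminus\LPV(\check\pi)|$ unrestricted — equivalently all $S$ containing the ``forced'' set $\LPV(\check\pi)$ of size $\leaf(T_\pi)-1$ — and that each such $S$ is realized by a \emph{unique} orbit element. The existence and uniqueness are precisely Lemma~\ref{L-2} (applied with the minimal representative $\check\pi$, whose $\LPV$ set is contained in every $S$), so summing $x^{|S|+1}$ over all $S\supseteq\LPV(\check\pi)$ gives $\sum_{j=0}^{n-\leaf(T_\pi)}\binom{n-\leaf(T_\pi)}{j}x^{\leaf(T_\pi)+j} = x^{\leaf(T_\pi)}(1+x)^{n-\leaf(T_\pi)}$.

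The main obstacle, and the step deserving the most care, is verifying that $|\mathrm{Orb}(T_\pi)| = 2^{\,n-\leaf(T_\pi)}$ with the orbit genuinely parametrized by the free nodes — i.e.\ that the HR--action at a node with two children is redundant (does not enlarge the orbit beyond what the free-node flips already give) while the flips at the $n-\leaf(T_\pi)$ free nodes are ``independent'' in the sense of producing $2^{n-\leaf(T_\pi)}$ distinct trees. One must check that flipping a two-children node can always be undone, or absorbed, by a combination of free-node flips, so that it contributes no new element; this is where the precise bookkeeping of how $\psi_j$ permutes labels within the right subtree, together with the commutativity of the $\psi_j$, has to be invoked. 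Everything else — the evaluation of the generating function and the passage between $\as$, $\run$, and $\LPV$ via \eqref{as:run} and \eqref{eq:as:Lpv} — is routine once the orbit structure is pinned down.
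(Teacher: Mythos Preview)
Your ``cleanest route'' via $\LPV$ is exactly the paper's proof: one shows that the map $\sigma\mapsto\LPV(\sigma)$ is a bijection from $\mathrm{Orb}(T_\pi)$ onto $\{S\subseteq[n-1]:\LPV(\check\pi)\subseteq S\}$, then sums $x^{|S|+1}$ using $|\LPV(\check\pi)|=\leaf(T_\pi)-1$ from Lemma~\ref{L-1}.

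Two comments, however. First, your opening paragraph miscounts: the $n-\leaf(T_\pi)$ ``remaining'' nodes are the \emph{inner} nodes (those with one or two children), not ``leaves or one-child nodes''; $\psi_j$ is trivial at every leaf, so the $2^{n-\leaf(T_\pi)}$ degrees of freedom come from independently choosing min/max at each inner node. Second, the worry in your final paragraph is a red herring. You do not need to argue separately that two-child flips are ``redundant'' or to compute the orbit size by hand. What you actually need---and what you glossed over---is the containment $\LPV(\check\pi)\subseteq\LPV(\sigma)$ for every $T_\sigma\in\mathrm{Orb}(T_\pi)$. This is the first line of the paper's proof, and it follows because the positions of two-child nodes are invariant under the HR--action (same tree shape) and are always peaks or valleys, hence lie in $\LPV(\sigma)$; combined with $|\LPV(\check\pi)|=\leaf(T_\pi)-1$ these positions \emph{are} $\LPV(\check\pi)$. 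Once that containment is in place, Lemma~\ref{L-2} supplies both injectivity and surjectivity of $\sigma\mapsto\LPV(\sigma)$ onto the supersets of $\LPV(\check\pi)$, and the orbit size $2^{n-\leaf(T_\pi)}$ drops out automatically---no separate bookkeeping of which flips are absorbed by others is required.
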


\begin{proof}
By the proof of Lemma~\ref{L-2}, we have for any 
$T_\sigma\in\mathrm{Orb}(T_\pi)$ that 
\[
\LPV(\check{\pi})\subseteq \LPV(\sigma).
\]
Hence, using~\eqref{eq:as:Lpv},
\[
\as(\sigma)-\as(\check{\pi})
   = \bigl|\LPV(\sigma)\setminus \LPV(\check{\pi})\bigr|.
\]
It follows that
\begin{align}\label{k-1'}
\sum_{T_\sigma\in\mathrm{Orb}(T_\pi)} x^{\as(\sigma)}
&= x^{\as(\check{\pi})}
   \sum_{T_\sigma\in\mathrm{Orb}(T_\pi)}
       x^{\left|\LPV(\sigma)\setminus \LPV(\check{\pi})\right| }\nonumber\\
&= x^{\as(\check{\pi})}
   \sum_{S\subseteq [n-1]\setminus \LPV(\check{\pi})}
      x^{|S|}. 
\end{align}

By Lemma~\ref{L-1}, we have 
$\as(\check{\pi})=\leaf(T_\pi)$ and 
$|\LPV(\check{\pi})|=\leaf(T_\pi)-1$.  
Thus,
\[
\bigl|[n-1]\setminus \LPV(\check{\pi})\bigr|
  = (n-1)-(\leaf(T_\pi)-1)
  = n-\leaf(T_\pi).
\]
Substituting this into~\eqref{k-1'} and applying the binomial
formula yields~\eqref{k-1}.
\end{proof}


\begin{theorem}\label{C-1}
We have
\begin{align}\label{as-enumerator}
\sum_{\sigma\in\S_n}x^{\mathrm{as}(\sigma)}=\sum_{i=0}^{\lfloor(n-2)/{2}\rfloor}d_{n,i}x^{i+1}(1+x)^{n-i-1},
\end{align}
where $d_{n,i}$ is the number of  Andr\'e  $n$-permutations with  $i$ descents.
\end{theorem}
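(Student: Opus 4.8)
The plan is to derive \eqref{as-enumerator} by summing Lemma~\ref{T-1} over all orbits of the HR--action and then identifying the number of orbits with a prescribed leaf number as a descent statistic on André permutations. Concretely, since $T\mapsto w(T)$ is a bijection $\hrn\to\S_n$ and the HR--action leaves the shape of a tree (hence the value $\leaf(T)$) unchanged, partitioning $\hrn$ into orbits $O$ and applying Lemma~\ref{T-1} to each gives
\[
\sum_{\sigma\in\S_n}x^{\as(\sigma)}=\sum_{O}x^{\ell(O)}(1+x)^{\,n-\ell(O)}
=\sum_{j\ge 1}c_{n,j}\,x^{j}(1+x)^{\,n-j},
\]
where $\ell(O)$ is the common leaf number on $O$ and $c_{n,j}$ is the number of orbits $O$ with $\ell(O)=j$. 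So the theorem amounts to the identity $c_{n,i+1}=d_{n,i}$.

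The key geometric input is that each orbit $O$ contains exactly one \emph{André tree}, i.e.\ (by Lemma~\ref{lemma-HR}) exactly one HR--tree all of whose inner nodes are $\min$--nodes. For existence I would run a top-down ``straightening'': if the root of $T\in O$ is a $\max$--node, apply $\psi$ at the root (which, by the HR--property, turns it into a $\min$--node while relabelling only its right subtree), and then recurse on the two subtrees of the root, observing that HR--operators attached to positions inside a subtree act wholly within that subtree and are therefore legitimate moves of $\Orb(T)$. For uniqueness, if $T,T'$ are André trees with $T'=\psi_S(T)$ then they share the same shape, so both roots bear the global minimum label; the only operator that can change the root label is the one at the root, and it would move that label to the maximum of the right subtree (which is nonempty, since every inner node of an HR--tree has a right child), contradicting $T'$ being André. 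Hence $S$ meets only the positions strictly inside the two subtrees of the root; those subtrees of $T$ and $T'$ then have equal label sets and lie in the same orbit, and $T=T'$ follows by induction on $n$.

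Next I would compute the leaf number of an André tree $T$ in terms of its reading word $w(T)$, proving $\leaf(T)=\des(w(T))+1$. Reading $w(T)=w_1\cdots w_n$ left-first, a position $j\in[n-1]$ is a descent exactly when its node $u$ has no right child: if $u$ has a right child, its in-order successor lies in $T^r(u)$ and so has a larger label than the $\min$--node $u$ (an ascent); if $u$ has no right child, the in-order successor is the nearest ancestor $z$ having $u$ in its left subtree, and the $\min$--node $z$ has a smaller label than $u$ (a descent). Since an HR--tree has no inner node without a right child, the nodes without a right child are precisely the leaves, and discarding the in-order-last one (which has no successor) gives $\des(w(T))=\leaf(T)-1$.

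Putting these together, $O\mapsto w(A_O)$ (with $A_O$ the André tree of $O$) is a bijection from orbits to $\A_n$ with $\ell(O)=\des(w(A_O))+1$, so $c_{n,i+1}=\#\{\alpha\in\A_n:\des(\alpha)=i\}=d_{n,i}$; substituting into the displayed sum and noting that $d_{n,i}=0$ once $i$ exceeds the maximal number of descents of an André permutation of $[n]$ (an André permutation has its descents among positions $1,\dots,n-2$ with no two consecutive) yields \eqref{as-enumerator}. I expect the uniqueness half of the André-tree statement to be the main obstacle: existence is a short recursive straightening, but excluding two André trees in one orbit needs the root-splitting induction above and the structural fact that inner nodes of HR--trees always have a nonempty right subtree.
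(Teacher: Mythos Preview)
Your proposal is correct and follows essentially the same route as the paper: sum the orbit identity of Lemma~\ref{T-1} over all HR--orbits, use that each orbit has a unique Andr\'e representative (Lemma~\ref{lemma-HR}), and compute $\leaf(T_{\tilde\pi})$ in terms of a descent-type statistic on~$\tilde\pi$. The paper states the uniqueness of the Andr\'e tree in each orbit without proof and passes through valleys (using $\mathrm{val}(\tilde\pi)+1=\leaf(T_{\tilde\pi})$, together with the implicit fact that $\des=\mathrm{val}$ for Andr\'e permutations); your version supplies an explicit existence/uniqueness argument and instead establishes $\leaf(T)=\des(w(T))+1$ directly via the in-order successor analysis, which is arguably cleaner but not a different strategy.
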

\begin{proof}
For $\pi\in\mathfrak{S}_n$,  
the orbite $\mathrm{Orb}(T_\pi)$ contains a
 unique  tree $T_{\tilde{\pi}}$, of which  all the inner nodes are $\min$-nodes. By Lemma~\ref{lemma-HR},
$\tilde{\pi}$ is an Andr\'e permutation.
Consequently, $|\mathrm{Lpk}(\tilde{\pi})|=\mathrm{val}(\tilde{\pi})$, since $\tilde{\pi}_n=n$.
Moreover, because every inner node of $T_{\tilde{\pi}}$ is a $\min$--node,
we obtain
\[
\mathrm{val}(\tilde{\pi})+1=\leaf(T_\pi).
\]
Finally, by Lemma~\ref{T-1}, the desired result follows.
\end{proof}
\begin{table}[t]
  $$
  \begin{array}{c|ccccc}
\hbox{$n$}\backslash\hbox{$k$}&0&1&2&3\\
\hline
1& 1&\\
2& 1&\\
3& 1&2&&\\
4& 1&8&&\\
5& 1&22&16&\\
6& 1&52&136&\\
7& 1&114&720&272\\
\end{array}
\qquad\qquad\qquad
 \begin{array}{c|cccc|c}
 n\diagdown  k &0&1&2&3&E_n=\sum_k d_{n,k}\\
 \hline
 1&1&&&&1\\
 2&1&&&&1\\
 3&1&1&&&2\\
 4&1&4&&&5\\
 5&1&11&4&&16\\
 6&1&26&34&&61\\
 7&1&57&180&34&272\\
 \end{array}
 $$
 \vspace{10pt}
 \caption{The first values of $2^k d_{n,k}$ (left), $d_{n,k}$ and  $E_n$ for $0\leq 2k< n\leq 7$. \label{table-1}}
 \end{table}
 
 The Euler numbers $E_n$ are  defined by the exponential generating function
\[
\sum_{n\geq 0} E_n\frac{x^n}{n!}=\tan (x) +\sec(x).
\]
The array $(d_{n,k})_{n,k\ge0}$ corresponds to
\href{https://oeis.org/A094503}{OEIS~A094503}~\cite{oeis22}.
It satisfies the recurrence relation
\[
d_{n,k} \;=\; (k+1)\,d_{n-1,k} + (n-2k)\,d_{n-1,k-1},
\]
together with the summation identity
\[
\sum_k d_{n,k} \;=\; E_n.
\]
See Table~\ref{table-1}.
\subsection{Alternating runs }

Define the \emph{André polynomials}~\cite{FH01} by
\[
D_n(x)=\sum_{i=0}^{\lfloor (n-1)/2 \rfloor} d_{n,i}\, x^{\,i+1},
\]
where $d_{n,i}$ denotes the number of André $n$–permutations with $i$ descents.

\begin{theorem}
For $n\ge 2$, we have
\begin{subequations}
    \begin{equation}\label{R-D}
    R_n(x)
    = 2(1+x)^{\,n-1}\,
      D_n\!\left(\frac{x}{1+x}\right).
\end{equation}
Equivalently,
\begin{align}\label{eq-2}
R_n(x)
   = 2 \sum_{i=0}^{\lfloor (n-1)/2 \rfloor}
       d_{n,i}\, x^{\,i+1}(1+x)^{\,n-2-i}.
\end{align}
\end{subequations}
\end{theorem}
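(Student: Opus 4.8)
The plan is to derive both \eqref{R-D} and \eqref{eq-2} from Theorem~\ref{C-1}, which already records the André-form expansion of $\sum_{\sigma\in\mathfrak S_n}x^{\as(\sigma)}$; the whole task is to convert the statistic $\as$ into $\run$ by means of \eqref{as:run}.

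I would first split $\mathfrak S_n$ according to the sign of $\pi_1-\pi_2$ and set
\[
R_n^{+}(x)=\sum_{\substack{\pi\in\mathfrak S_n\\ \pi_1<\pi_2}}x^{\run(\pi)},
\qquad
R_n^{-}(x)=\sum_{\substack{\pi\in\mathfrak S_n\\ \pi_1>\pi_2}}x^{\run(\pi)},
\]
so that $R_n(x)=R_n^{+}(x)+R_n^{-}(x)$. The key step is the claim that $R_n^{+}(x)=R_n^{-}(x)=\tfrac12 R_n(x)$ for $n\ge2$. To prove it I would invoke the complementation involution $\pi\mapsto\bar\pi$ on $\mathfrak S_n$, where $\bar\pi_i=n+1-\pi_i$: it reverses every comparison between consecutive entries, so $\pi$ changes direction at an index $i$ if and only if $\bar\pi$ does, whence $\run(\bar\pi)=\run(\pi)$; moreover $\pi_1<\pi_2$ if and only if $\bar\pi_1>\bar\pi_2$. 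Hence $\pi\mapsto\bar\pi$ is a $\run$-preserving bijection between $\{\pi:\pi_1<\pi_2\}$ and $\{\pi:\pi_1>\pi_2\}$, which forces $R_n^{+}=R_n^{-}$, and therefore each equals $\tfrac12R_n(x)$.

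With this in hand the rest is formal. Splitting the $\as$-enumerator by the same dichotomy and using \eqref{as:run},
\begin{align*}
\sum_{\sigma\in\mathfrak S_n}x^{\as(\sigma)}
&=\sum_{\substack{\pi\in\mathfrak S_n\\ \pi_1<\pi_2}}x^{\run(\pi)}
 +\sum_{\substack{\pi\in\mathfrak S_n\\ \pi_1>\pi_2}}x^{\run(\pi)+1}\\
&=R_n^{+}(x)+x\,R_n^{-}(x)=\frac{1+x}{2}\,R_n(x),
\end{align*}
while Theorem~\ref{C-1} identifies the left-hand side with $\sum_{i\ge0}d_{n,i}\,x^{i+1}(1+x)^{n-i-1}$, the terms with $i>\lfloor(n-1)/2\rfloor$ vanishing because an André permutation of $[n]$ has at most $\lfloor(n-1)/2\rfloor$ descents (no double and no final descent). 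Since $n-i-1\ge1$ for every index $i$ that contributes, I can divide term by term by $(1+x)/2$ to obtain
\[
R_n(x)=2\sum_{i=0}^{\lfloor(n-1)/2\rfloor}d_{n,i}\,x^{i+1}(1+x)^{n-i-2},
\]
which is \eqref{eq-2}; and since $D_n(x)=\sum_{i=0}^{\lfloor(n-1)/2\rfloor}d_{n,i}\,x^{i+1}$, the right-hand side equals $2(1+x)^{n-1}D_n\!\bigl(\tfrac{x}{1+x}\bigr)$, which is \eqref{R-D}. The only genuinely non-routine ingredient is the equal-contribution identity $R_n^{+}=R_n^{-}$, supplied by complementation; the rest — the bookkeeping with the $+1$ shift in \eqref{as:run} and the term-by-term division by $(1+x)$ — is elementary.
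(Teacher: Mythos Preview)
Your proof is correct and follows essentially the same route as the paper: split $\mathfrak S_n$ according to whether $\pi_1<\pi_2$ or $\pi_1>\pi_2$, use the complementation $\pi\mapsto n+1-\pi$ to show the two halves have the same $\run$-distribution, combine this with \eqref{as:run} to get $(1+x)R_n(x)=2\sum_{\sigma}x^{\as(\sigma)}$, and then invoke Theorem~\ref{C-1}. Your extra remarks (the bound $i\le\lfloor(n-1)/2\rfloor$ from the no-double-descent/no-final-descent constraint, and the check that $n-i-1\ge1$ so the division by $1+x$ is legitimate term by term) are sound refinements but do not change the argument.
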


\begin{proof}
Define the two subsets of $\S_n$:
\begin{align*}
\S_n^> &:= \{\sigma\in\S_n : \sigma_1>\sigma_2\},\\
\S_n^< &:= \{\sigma\in\S_n : \sigma_1<\sigma_2\}.
\end{align*}
The \emph{complementation} $\sigma\mapsto\sigma^{\mathrm{c}}$, defined by  
$\sigma^{\mathrm{c}}_i = n+1-\sigma_i$ for $1\le i\le n$,  
is a bijection between these two sets and preserves the number of alternating runs; that is,
\[
\run(\sigma)=\run(\sigma^{\mathrm{c}}).
\]
Therefore, by \eqref{def: run polynomials},
\begin{align}\label{eq-1}
R_n(x)=2\sum_{\sigma\in\S_n^>} x^{\run(\sigma)}=2\sum_{\sigma\in\S_n^<} x^{\run(\sigma)}, \qquad n\ge 2.
\end{align}
Combining \eqref{as:run} with \eqref{eq-1}, we obtain
\begin{align*}
2\sum_{\sigma\in\S_n} x^{\as(\sigma)}
   = x 2\sum_{\sigma\in\S_n^{>}} x^{\run(\sigma)}
      \;+\; 2\sum_{\sigma\in\S_n^{<}} x^{\run(\sigma)} =(1+x) R_n(x).
\end{align*}
The result then follows from Theorem~\ref{C-1}.
\end{proof}

The first few instances of \eqref{eq-2} are as follows:
\begin{align*}
R_1(x) &= x, 
\qquad R_2(x) = 2x,\\[2mm]
R_3(x) &= 2x(1+x) + 2x^{2},\\[1mm]
R_4(x) &= 2x(1+x)^{2} + 8x^{2}(1+x),\\[1mm]
R_5(x) &= 2x(1+x)^{3} + 22x^{2}(1+x)^{2} + 8x^{3}(1+x),\\[1mm]
R_6(x) &= 2x(1+x)^{4} + 52x^{2}(1+x)^{3} + 68x^{3}(1+x)^{2}.
\end{align*}

\begin{remark}
Ma~\cite[Theorem~11]{Ma13} derived~\eqref{R-D} from the known
difference–differential equations satisfied respectively by 
$R_n(x)$ and $D_n(x)$.

\end{remark}
 
\begin{coro}   
When $n\geq 2$ we have 
\begin{align}\label{factor}
    R_n(x)=2(1+x)^{\lfloor (n-2)/2 \rfloor}\sum_{k=1}^{\lfloor (n+1)/2\rfloor}d_{n,k-1}x^k(1+x)^{\lfloor (n+1)/2\rfloor-k}.
\end{align}
\end{coro}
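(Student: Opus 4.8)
The plan is to derive the corollary directly from equation~\eqref{eq-2} by splitting the exponent $n-2-i$ of $(1+x)$ into a part that does not depend on $i$ and a part that does. Concretely, I would observe that for every $i$ in the summation range $0\le i\le \lfloor (n-1)/2\rfloor$ we have $n-2-i\ge \lfloor (n-2)/2\rfloor$, so that the factor $(1+x)^{\lfloor (n-2)/2\rfloor}$ can be pulled out of each term of the sum. The first routine step is therefore the arithmetic identity
\[
n-2-i=\Bigl\lfloor\tfrac{n-2}{2}\Bigr\rfloor+\Bigl(\bigl\lceil\tfrac{n-2}{2}\bigr\rceil-i\Bigr),
\]
which holds because $\lfloor (n-2)/2\rfloor+\lceil (n-2)/2\rceil=n-2$. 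One then checks that $\lceil (n-2)/2\rceil=\lfloor (n-1)/2\rfloor$ for all $n$, and that $\lfloor(n-1)/2\rfloor=\lfloor(n+1)/2\rfloor-1$, so the residual exponent equals $\lfloor(n+1)/2\rfloor-1-i$, and the nonnegativity of this residual exponent over the summation range is exactly the condition $i\le\lfloor(n-1)/2\rfloor$ already present in~\eqref{eq-2}.

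With this in hand, the second step is purely cosmetic: reindex the sum by setting $k=i+1$, so that $i$ ranges over $0\le i\le\lfloor(n-1)/2\rfloor$ precisely when $k$ ranges over $1\le k\le\lfloor(n+1)/2\rfloor$, the coefficient $d_{n,i}$ becomes $d_{n,k-1}$, the monomial $x^{i+1}$ becomes $x^k$, and the residual exponent $\lfloor(n+1)/2\rfloor-1-i$ becomes $\lfloor(n+1)/2\rfloor-k$. Pulling the constant factor $2(1+x)^{\lfloor(n-2)/2\rfloor}$ outside the sum yields~\eqref{factor} verbatim.

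The only point that requires a moment's care — and the closest thing to an obstacle here — is the parity bookkeeping of the floor and ceiling functions, namely verifying that $n-2-i-\lfloor(n-2)/2\rfloor=\lfloor(n+1)/2\rfloor-1-i$ holds uniformly for both even and odd $n$, and that the upper summation limit is correctly $\lfloor(n+1)/2\rfloor$ rather than off by one. I would dispatch this by treating the two cases $n=2m$ and $n=2m+1$ separately in a single line each: for $n=2m$ one has $\lfloor(n-2)/2\rfloor=m-1$, $\lfloor(n-1)/2\rfloor=m-1$, $\lfloor(n+1)/2\rfloor=m$; for $n=2m+1$ one has $\lfloor(n-2)/2\rfloor=m-1$, $\lfloor(n-1)/2\rfloor=m$, $\lfloor(n+1)/2\rfloor=m$. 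In both cases the identity $\lfloor(n-1)/2\rfloor=\lfloor(n+1)/2\rfloor-1$ and the exponent rewriting check out immediately. Since~\eqref{eq-2} is valid for $n\ge2$, the resulting identity~\eqref{factor} inherits the same range, completing the argument.
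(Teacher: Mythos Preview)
Your approach is correct and is exactly what the paper intends: the corollary is stated without proof, as an immediate rewriting of~\eqref{eq-2} obtained by factoring out $(1+x)^{\lfloor(n-2)/2\rfloor}$ and reindexing $k=i+1$. One small slip in your case analysis: for $n=2m+1$ you have $\lfloor(n+1)/2\rfloor=m+1$, not $m$ (indeed $n+1=2m+2$); with this correction your verification of $\lfloor(n-1)/2\rfloor=\lfloor(n+1)/2\rfloor-1$ and of the exponent identity goes through in both parities as you claim.
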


Hence, for $n\geq 2$, the polynomial $R_n(x)$ is divisible by 
$(1+x)^{\lfloor (n-2)/2 \rfloor}$  and the quotient is given by
\begin{equation}\label{def:M}
    M_n(x):=2\sum_{k=1}^{\lfloor (n+1)/2\rfloor}d_{n,k-1}x^k(1+x)^{\lfloor (n+1)/2\rfloor-k}.
\end{equation}

The first few polynomials of $M_n(x)$ with $n\geq 3$  are as follows:
\begin{align*}
M_3(x)&=2(x+2x^2),\\
    M_4(x)&=2(x+5x^2),\\
     M_5(x)&=2(x+3x^2+4x^3),\\
     M_6(x)&=2(x+28x^2+61x^3).
\end{align*}
\begin{remark} 
Since  $M_n(-1)=2 \,(-1)^{\lfloor (n+1)/2\rfloor} d_{n,\lfloor (n+1)/2\rfloor-1}\neq 0$, we deduce that 
$-1$ is a zero  of $R_n(x)$ of order $\lfloor (n-2)/2 \rfloor$. This result is stronger than merely asserting that
\(R_n(x)\) has a factor \((x+1)^{\lfloor (n-2)/2 \rfloor}\), see \eqref{multiplicity-m}.
\end{remark}

By \eqref{def:M} it is clear that $M_n(x)$ is a polynomial with nonnegative integer coefficients.
B\'ona~\cite{Bo21} provided a combinatorial interpretation of the polynomial~$M_n(x)$.
In what follows, we give an alternative interpretation of $M_n(x)$
in terms of HR--trees.

\begin{definition}
    Let $T_\pi\in \mathcal{HR}_n$ and  let \[
\pi_{i_1},\,\pi_{i_2},\,\cdots,\pi_{i_m} \; (i_1<i_2\cdots<i_m)\]
    be the labels of the nodes of $T_\pi$ having exactly one child.
    The node labeled $\pi_{i_k}$ 
    is called the $k$-th node with one child. It 
is said to be an even (resp. odd) node with one child if $k$ is even (resp. odd). We denote by $\widetilde{\hr}_n$ 
the set of the trees $T\in \mathcal{HR}_n$  satisfying the following two conditions:
\begin{itemize}
\item all nodes with two children of $T$ are $\min$-nodes;
\item all $\min$-nodes with  one child of $T$ are  even  nodes.
\end{itemize}
\end{definition}

\begin{example}
Figure~\ref{fig:hr-example} illustrates a tree $T_\pi\in\widetilde{\mathcal{HR}}_5$ with $\pi=42135$.
In this example, the node with two children is the node $1$, and
the $\min$-node with one child is labeled with  $\pi_4=3$,
so $T_\pi$ indeed belongs to~$\widetilde{\mathcal{HR}}_5$.
\end{example}

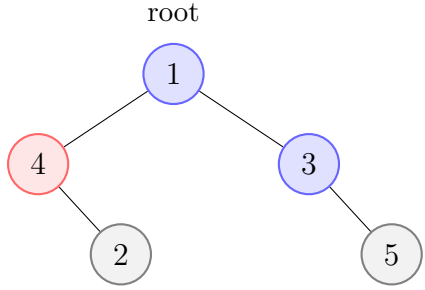
\begin{figure}[ht]
\centering
\begin{tikzpicture}[
  level distance=12mm,
  every node/.style={circle,draw,minimum size=8mm,inner sep=0pt,thick},
  minnode/.style={draw=blue!60,fill=blue!12},
  maxnode/.style={draw=red!60,fill=red!10},
  leafnode/.style={draw=black!50,fill=black!5},
  evenring/.style={draw=green!70!black, line width=1.2pt},
  level 1/.style={sibling distance=36mm},
  level 2/.style={sibling distance=22mm}
]

\node[minnode,label=above:{\small root}] (r){$1$}
  child { node[maxnode] (a){$4$}
      child[missing]
    child { node[leafnode] (b){$2$} }
  }
  child { node[minnode] (b){$3$}
    child[missing]
    child { node[leafnode] (bR){$5$} }
  };
\end{tikzpicture}
\caption{A tree $T_\pi\in\widetilde{\mathcal{HR}}_5$. }
\label{fig:hr-example}
\end{figure}

\begin{theorem}
    For $n\geq 2$, we have
\begin{equation}\label{3.9}
    M_n(x)=2\sum_{T\in \widetilde{\hr}_n}x^{\min(T)+1},
\end{equation}
 where $\min(T)$ is the number of $\min$-nodes of T.
\end{theorem}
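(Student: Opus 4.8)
The plan is to reorganize the sum over HR--trees in \eqref{def:M} (equivalently \eqref{3.9}) by grouping the André trees $T_{\tilde\pi}$ (those with all inner nodes $\min$-nodes) according to how their one-child nodes are distributed, and to match this grouping with the family $\widetilde{\hr}_n$. Recall from the proof of Theorem~\ref{C-1} that each HR--orbit $\Orb(T_\pi)$ contains a unique André tree $T_{\tilde\pi}$, and that $\mathrm{val}(\tilde\pi)+1 = \leaf(T_\pi)$ equals the number of leaves; combining with \eqref{def:M} and Lemma~\ref{T-1}, $M_n(x)$ is, up to the factor $2$ and the normalization by $(1+x)^{\lfloor(n-2)/2\rfloor}$, a sum over André trees weighted by $x^{\leaf}$. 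The idea is that $\leaf(T)$ is the number of two-child nodes plus one, and those are exactly the $\min$-nodes of an André tree that have two children, so the exponent in \eqref{3.9} should account for the two-child $\min$-nodes together with half of the one-child $\min$-nodes.

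First I would make precise the bijection underlying \eqref{3.9}. Starting from an André tree $T_{\tilde\pi}$ (all inner nodes $\min$), I would apply a canonical sequence of HR--actions $\psi_i$ at the one-child nodes, reading them in position order $i_1<i_2<\cdots<i_m$, flipping every \emph{odd-indexed} one-child node from a $\min$-node to a $\max$-node while leaving even-indexed ones as $\min$-nodes. Since $\psi_i$ at a one-child node only toggles that node between $\min$ and $\max$ and permutes labels inside its (single, right) subtree, and since the $\psi_i$ commute and are involutions (Lemma of Hetyei--Reiner), this produces a well-defined tree $T$ in the same orbit. By construction $T$ has all two-child nodes still $\min$-nodes and all its $\min$-nodes with one child are even-indexed, i.e.\ $T\in\widetilde{\hr}_n$. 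Conversely, re-flipping the odd one-child nodes back to $\min$ recovers the André tree, so $T_{\tilde\pi}\mapsto T$ is a bijection between the André trees of order $n$ and $\widetilde{\hr}_n$. The key bookkeeping point is that this flipping does not change the set of two-child nodes (hence not $\leaf$), and it converts one-child $\min$-nodes in the odd positions into $\max$-nodes; so if $T$ has $m$ one-child nodes, $T$ has exactly $\lceil m/2\rceil$ one-child $\max$-nodes and $\lfloor m/2\rfloor$ one-child $\min$-nodes, and the number of two-child nodes is $\min(T)-\lfloor m/2\rfloor$, giving $\leaf(T_\pi)=\min(T)-\lfloor m/2\rfloor+1$.

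Next I would translate the right-hand side of \eqref{def:M}. By Theorem~\ref{C-1} and Lemma~\ref{T-1}, $M_n(x)=2\sum_{\Orb}x^{\leaf(T_\pi)}(1+x)^{\lfloor(n+1)/2\rfloor-\leaf(T_\pi)}$, one term per orbit; pushing the power of $(1+x)$ back in via the binomial theorem exactly undoes the passage from $\run$ to the orbit sum, and the claim \eqref{3.9} asserts that after this the weight concentrates as $x^{\min(T)+1}$ for the canonical representative $T\in\widetilde{\hr}_n$. So what must be checked is the identity of polynomials
\[
\sum_{T\in\widetilde{\hr}_n} x^{\min(T)+1}
 \;=\;
\sum_{\Orb(T_\pi)} x^{\leaf(T_\pi)}(1+x)^{\lfloor(n+1)/2\rfloor-\leaf(T_\pi)},
\]
which I would prove by showing that each orbit contributes, on the left, precisely a sub-sum of monomials $x^{j+1}$ with $\leaf(T_\pi)\le j+1\le \lfloor(n+1)/2\rfloor$ obtained as the one-child nodes of the André representative are (in all $2^m$ ways, but grouped two at a time by the odd/even flipping) converted to $\max$- or $\min$-nodes — wait, rather: I would instead directly enumerate $\widetilde{\hr}_n$ by building its trees and observing that choosing which of the $n-1-\leaf+1$ non-forced positions become extra valleys/peaks corresponds exactly to the $(1+x)$-expansion; the cleanest route is to count, for a fixed André tree with $\leaf-1$ two-child nodes and $m$ one-child nodes, that the trees in $\widetilde{\hr}_n$ with that orbit and with $\min(T)+1=j+1$ number $\binom{\lfloor(n+1)/2\rfloor-\leaf(T_\pi)}{j+1-\leaf(T_\pi)}$, matching the binomial coefficient on the right.

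The main obstacle I anticipate is the last step: verifying that the number of one-child $\min$-nodes in the canonical $\widetilde{\hr}_n$-representative ranges over exactly the right interval and with the right multiplicities so that summing $x^{\min(T)+1}$ over $\widetilde{\hr}_n$ reproduces $x^{\leaf}(1+x)^{\lfloor(n+1)/2\rfloor-\leaf}$ orbit-by-orbit — in particular pinning down that $\lfloor(n+1)/2\rfloor - \leaf(T_\pi)$ equals $\lfloor m/2\rfloor$ where $m$ is the number of one-child nodes of the orbit's André tree (this is where the floor functions and the parity of $n$ must be handled carefully, using that an André permutation ends in $n$ and has no double descents, which constrains $m$ in terms of $n$ and $\leaf$). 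Once that arithmetic identity between $m$, $n$, and $\leaf$ is established, the rest is the bijection described above together with the binomial theorem.
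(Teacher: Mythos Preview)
Your first step contains a genuine error: the map you describe --- flip every \emph{odd}-indexed one-child node of the Andr\'e tree to a $\max$-node and leave the even ones as $\min$-nodes --- is \emph{not} a bijection from Andr\'e trees onto $\widetilde{\hr}_n$. The definition of $\widetilde{\hr}_n$ only requires that every $\min$-node with one child be even; it does \emph{not} force every even one-child node to be a $\min$-node. Hence an even one-child node is free to be either type, and your map hits only the trees in $\widetilde{\hr}_n$ whose even one-child nodes are all $\min$. Concretely, for $n=4$ there are $5$ Andr\'e permutations but $|\widetilde{\hr}_4|=6$ (from $M_4(x)=2(x+5x^2)$), so no bijection is possible.

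What the paper does --- and what you correctly grope toward after your ``wait, rather:'' --- is a many-to-one correspondence: from an Andr\'e tree with $k-1$ two-child nodes and $m=n-2k+1$ one-child nodes, one flips all odd one-child nodes to $\max$ and, independently, chooses any subset of the $\lfloor m/2\rfloor=\ell-k$ even one-child nodes to flip as well (here $\ell=\lfloor(n+1)/2\rfloor$). Keeping $\ell-k-i$ of them as $\min$ yields a tree in $\widetilde{\hr}_n$ with $\min(T)=(k-1)+(\ell-k-i)=\ell-i-1$; the $\binom{\ell-k}{i}$ choices exactly reproduce the binomial expansion of $x^{k}(1+x)^{\ell-k}$ in \eqref{def:M}. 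The arithmetic identity you flag as the ``main obstacle'' is just $\lfloor(n-2k+1)/2\rfloor=\lfloor(n+1)/2\rfloor-k$, which is immediate. So once you drop the bijection claim and allow the even one-child nodes to be freely flipped, your second approach is the paper's proof.
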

\begin{proof}  Let $\ell=\lfloor (n+1)/2\rfloor$ and 
\begin{equation}\label{def:M-p}
M_n(x)=2\sum_{i=1}^{\ell}q_{n,i}x^{\ell-i}.
\end{equation}
It follows from \eqref{def:M} and \eqref{def:M-p} that 
\begin{equation}
    q_{n,i}=\sum_{k=1}^{\ell-i}
\binom{\ell-k}{i}
 d_{n,k-1} \quad (i=0, \ldots, \ell-1).
\end{equation}
For $i\in \{0, \ldots, \ell-1\}$, let $\widetilde{\hr}_{n,i}$ denote the set of trees $T$ in $\widetilde{\hr}_n$ such that 
\begin{equation}
\min(T)=\ell-i-1.
\end{equation}
Thus,  Eq~\eqref{3.9} is equivalent to 
  \begin{align}\label{eq:key}
|\widetilde{\hr}_{n,i}|=\sum_{k=1}^{\ell-i}
\binom{\ell-k}{i}
 d_{n,k-1} \quad (i=0, \ldots, \ell-1).
\end{align}
Let  $\pi$ be an André $n$-permutation with $k-1$ descents. 
By Lemma~\ref{lemma-HR}, all the inner nodes of the tree $T_{\pi}\in \mathcal{HR}_{n}$ are $\min$-nodes, of which 
$k-1$ nodes have  two children and  $n - 2k + 1$ nodes have one child.
Hence there are $\lfloor (n - 2k + 1)/2 \rfloor = \ell - k$ even $\min$-nodes with one child.
We now map $T_{\pi}$ to a tree $\widetilde{T}_\pi\in \widetilde{\mathcal{HR}}_{n,i}$ with $i\in \{0, 1, \ldots, \ell-k\}$
as follows.

Among the $\ell-k$ even nodes with one child, choose $\ell-k-i$ of them (there are
$\binom{\ell-k}{i}$ such  choices), and then apply  the HR–action to turn every remaining
one–child node—odd or even—into a $\max$–node. In the resulting tree
$\widetilde T_\pi$, the $\min$–nodes are precisely the original $k-1$
$\min$–nodes with two-children together with the $\ell-k-i$ even one–child nodes that were
not flipped; hence their number is
\[
(k-1)+(\ell-k-i)=\ell-i-1.
\]
Therefore $\widetilde T_\pi\in\widetilde{\mathcal{HR}}_{n,i}$. Since there are
$d_{n,k-1}$ André $n$-permutations with $k-1$ descents, equation
\eqref{eq:key} follows.

\end{proof}

\subsection{Link to Eulerian polynomials}\label{link to eulerian polynomials}
Define the \emph{Eulerian polynomial} \cite[Sec.~1.4]{St97} by
\[
A_n(x)
   = \sum_{\sigma\in \mathfrak{S}_n}
      x^{\,\des(\sigma)+1}\,,
\]
where $\des(\sigma)$ denotes the number of descents of $\sigma$.

A polynomial $h(t)=\sum_{i=0}^n h_i t^{i}$ with real coefficients is said to be \emph{palindromic} if  
$h_i = h_{n-i}$ for all $0\le i\le n/2$.  
It is well known that any palindromic polynomial admits the following so-called 
$\gamma$–decomposition:
\[
h(t)=\sum_{i=0}^{\lfloor n/2 \rfloor} \gamma_i\, t^{i}(1+t)^{\,n-2i}.
\]
If all the coefficients $\gamma_i$ are nonnegative, then $h(t)$ is said to be \emph{$\gamma$–nonnegative}.

The following $\gamma$–positive expansion of the Eulerian polynomial $A_n(x)$ is well known; 
see \cite{At18, Pe15, FS71}, \cite[Section~4]{FS73}, and also the $(p,q)$–analogue of \eqref{eq:FS} in \cite{PZ21}.  
For completeness, we provide a combinatorial proof using HR--action.

\begin{theorem}[Foata--Schützenberger]\label{thm:FS}
There exist positive integers $d_{n,k}$ such that
\begin{align}\label{eq:FS}
A_{n}(x)
   = \sum_{k=0}^{\lfloor (n-1)/2 \rfloor}
       2^{k}\, d_{n,k}\, x^{\,k+1}(1+x)^{\,n-1-2k},
\end{align}
where $d_{n,k}$ denotes the number of André $n$–permutations with $k$ descents.
\end{theorem}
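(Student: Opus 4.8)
The statement to prove is the Foata--Schützenberger $\gamma$--expansion \eqref{eq:FS} of the Eulerian polynomial $A_n(x)$. The plan is to run essentially the same HR--orbit argument used for $\as$ in Theorem~\ref{C-1}, but now tracking the statistic $\des(\sigma)+1$ instead of $\as(\sigma)$, and accounting for the extra factor $2^k$ that appears because descents are genuinely two-sided (each peak \emph{and} each valley contributes a descent-type choice, unlike left-peaks). First I would recall that every node with two children of $T_\pi$ is a peak or a valley of $\pi$, and that applying $\psi_i$ at such a node toggles whether the corresponding position is a descent bottom or a descent top; more precisely, for a node with two children the HR--action can independently realize the two local shapes, giving a factor of $2$ per such node on top of the $(1+x)$ factors coming from nodes with one child. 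Concretely, I would show: for each $T_\pi\in\mathcal{HR}_n$ with $\leaf(T_\pi)=i$ (equivalently $i-1$ nodes with two children),
\begin{equation*}
\sum_{T_\sigma\in\Orb(T_\pi)} x^{\des(\sigma)+1} \;=\; (2x)^{\,i-1}\,x\,(1+x)^{\,n-2(i-1)-1}\,?
\end{equation*}
— the precise exponent bookkeeping is the thing to get right, but the shape is $x\cdot(2x)^{i-1}(1+x)^{n-1-2(i-1)}$, so that summing over orbit representatives with the unique all-$\min$-node (André) representative $\tilde\pi$ in each orbit produces \eqref{eq:FS} with $d_{n,k}$ counting André permutations with $k$ descents, exactly as in Theorem~\ref{C-1}.

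The key steps, in order, are: (1) Show $\des$ interacts with the HR--action analogously to $\as$: fix an orbit, let $\tilde\pi$ be its unique all-$\min$-node representative (André, by Lemma~\ref{lemma-HR}), and describe how $\des(\sigma)$ varies over the orbit in terms of which nodes have been flipped. (2) Separate the flip choices into two types: nodes with one child of $T_{\tilde\pi}$ (each contributes an independent binary choice that changes $\des$ by $0$ or $1$, yielding a $(1+x)$ factor) and nodes with two children (each peak/valley of $\tilde\pi$ contributes an independent choice that \emph{always} changes $\des$ by exactly $1$, yielding a $2x$ factor, since both descents flanking a peak/valley are present regardless — here is where the $2^k$ comes from). (3) Count: if $\tilde\pi$ has $k$ descents then $T_{\tilde\pi}$ has $k$ nodes with two children (careful: an André permutation with $k$ descents; one must match "descents of $\tilde\pi$" with "nodes with two children of $T_{\tilde\pi}$" — note a node with two children that is a valley bottom gives a descent on its left, etc., and the André condition pins this down), so $\leaf(T_{\tilde\pi})=k+1$ and $n-\leaf(T_{\tilde\pi})=n-1-k$ nodes have one child; distributing the one-child nodes correctly gives the remaining exponent $n-1-2k$ after the $2x$ factors consume $k$ of them. (4) Sum over all André $\tilde\pi$ with $k$ descents, for each $k$, to obtain \eqref{eq:FS}.

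The main obstacle I anticipate is step~(2)--(3): pinning down the exact exponent bookkeeping so that a node with two children really does contribute a clean factor $2x$ (not $x(1+x)$ or $(x+x^2)$) while consuming the "right number" of potential $(1+x)$ slots. This requires a careful local analysis of how a descent is created or destroyed when $\psi_i$ is applied at a node depending on whether it has one or two children and on the min/max status of its neighbors in the word $w(T)$ — essentially a refinement of Proposition~\ref{p3} from "at least $i$ alternating runs" to an exact count of descents contributed by each node type. A secondary, more bookkeeping-level obstacle is verifying the identity "André permutation with $k$ descents $\iff$ HR--tree with all inner nodes $\min$-nodes and exactly $k$ nodes with two children," which should follow from Lemma~\ref{lemma-HR} together with the observation that in an all-$\min$-node tree each node with two children forces exactly one descent in the left-first reading and each one-child node forces an ascent; once this is in hand the rest is the same binomial/orbit-summation mechanism already deployed in the proof of Theorem~\ref{C-1}.
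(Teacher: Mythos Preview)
Your proposal is correct and follows essentially the same route as the paper: choose the unique André (all-$\min$-node) representative $\tilde\pi$ in each HR--orbit, then factor the orbit sum $\sum_{T\in\Orb(T_{\tilde\pi})}x^{\des\,w(T)}$ as a product over nodes, with each one-child inner node contributing $(1+x)$, each two-child inner node contributing $2x$, and each leaf contributing $1$. The one phrasing to tighten in step~(2) is that flipping $\psi_i$ at a two-child node does \emph{not} change $\des$ --- it shifts the descent from position $i-1$ to position $i$ --- so the factor is $2x$ because both orbit states carry the same single descent there; with that in hand the count is $n$ nodes $=$ $(k{+}1)$ leaves $+$ $k$ two-child nodes $+$ $(n{-}2k{-}1)$ one-child nodes, giving $(2x)^k(1+x)^{n-1-2k}$ and hence \eqref{eq:FS} after multiplying by the conventional extra $x$ in $A_n(x)=\sum x^{\des+1}$.
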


\begin{proof}[Proof of Theorem~\ref{thm:FS}]
Let $\pi\in\A_n$, and $T_\pi$ is the corresponding HR--tree, we have the following observations:
\begin{itemize}
    \item 
    If $i$ is a valley of $\pi$, i.e., $\pi_{i-1}>\pi_i<\pi_{i+1}$ for $i\in[n-1]$, then the node labeled by $\pi_i$ of $T_\pi$ is an inner node with two children;
    \item 
    If $\pi_i$ is a label of an inner node with two children of $T_\pi$, then $i-1$ is a descent position, $i$ is a descent position of $w(\psi_i(\pi))$, and $\des\,\psi_i(\pi)=\des\,\pi$;
    \item 
    If $\pi_i$ is a label of an inner node with one child of $T_\pi$, then $i$ is an ascent position of $\pi$, $i$ is a descent position of $w(\psi_i(\pi))$, and $\des\,\psi_i(\pi)=\des\,\pi+1$.
\end{itemize}
Let
\begin{align}
c_i(\pi)=
\begin{cases}
 2x,& \text{if $\pi_i$ is a label of an inner node with two children in $T_\pi$};\nonumber\\
 1+x,&\text{if $\pi_i$ is a label of an inner node with one child in $T_\pi$};\\
 1,&\text{if $\pi_i$ is a label of a leaf in $T_\pi$.}\\
 \end{cases}
 \end{align} 
 If $T_\pi$ has  
  $k$  inner nodes with 2 children, then  $\leaf(T_\pi)=k+1$, and $T_\pi$ has $n-2k-1$ inner nodes with one child.  Applying the above observations we have
\begin{align*}
\sum_{T\in\mathrm{Orb}(T_\pi)}x^{\mathrm{des}\,w(T)}&=c_1(\pi)c_2(\pi)\cdots c_n(\pi)\\
&=(2x)^k(1+x)^{n-2k-1}.
\end{align*}
Summing over all $\pi\in\A_n$ we  obtain the result.
\end{proof}

From  \eqref{eq-2} and \eqref{eq:FS} 
we derive the following formula due to David and 
 Barton~\cite[pp. 157-162]{DB62},
 see also Knuth~\cite[p. 605]{Kn98} and Stanley~\cite[p. 685]{St08}.

 \begin{proposition}[David-Barton]    For $n\geq 2$ and $w=\sqrt{\frac{1-x}{1+x}}$,
 \begin{align}\label{Run-Euler}
R_n(x)=\left(\frac{1+x}{2}\right)^{n-1} (1+w)^{n+1}
A_n\left(\frac{1-w}{1+w}\right).
\end{align}
 \end{proposition}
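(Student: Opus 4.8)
The plan is to derive the David--Barton identity \eqref{Run-Euler} by eliminating the André polynomial $D_n$ between the two expansions already at our disposal: the run-polynomial formula \eqref{eq-2}, which reads $R_n(x)=2\sum_i d_{n,i}\,x^{i+1}(1+x)^{n-2-i}$, and the Foata--Sch\"utzenberger $\gamma$-expansion \eqref{eq:FS}, which reads $A_n(x)=\sum_k 2^k d_{n,k}\,x^{k+1}(1+x)^{n-1-2k}$. Both are sums over the same array $d_{n,k}$, so the strategy is purely algebraic once these are in hand: introduce a single substitution that turns one generating identity into the other.

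First I would observe that it is cleaner to work with the André polynomial $D_n(u)=\sum_i d_{n,i}u^{i+1}$ as the common link. Then \eqref{R-D} gives $R_n(x)=2(1+x)^{n-1}D_n\!\bigl(\tfrac{x}{1+x}\bigr)$, while \eqref{eq:FS} rewrites as $A_n(y)=(1+y)^{n-1}D_n\!\bigl(\tfrac{2y}{(1+y)^2}\bigr)$ after factoring: indeed $\sum_k 2^k d_{n,k}y^{k+1}(1+y)^{n-1-2k}=(1+y)^{n-1}\sum_k d_{n,k}\bigl(\tfrac{2y}{(1+y)^2}\bigr)^{k+1}\cdot\tfrac{(1+y)^{2}}{2y}\cdot\tfrac{2y}{(1+y)^2}$; one must be slightly careful with the exponent bookkeeping, but the upshot is a clean formula $A_n(y)=\tfrac{(1+y)^{n+1}}{2}\,D_n\!\bigl(\tfrac{2y}{(1+y)^2}\bigr)\big/\,\text{(something)}$ — this is exactly the place where I would slow down and match powers of $y$ carefully. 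The key identity to aim for is that if one sets $y=\tfrac{1-w}{1+w}$ with $w=\sqrt{\tfrac{1-x}{1+x}}$, then $\tfrac{2y}{(1+y)^2}$ collapses to $\tfrac{x}{1+x}$, the very argument appearing in \eqref{R-D}.

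The computational heart is therefore the verification that, with $w=\sqrt{(1-x)/(1+x)}$ and $y=(1-w)/(1+w)$, one has
\[
\frac{2y}{(1+y)^2}=\frac{x}{1+x}
\qquad\text{and}\qquad
\frac{(1+y)^2}{2}=\frac{2}{1+w^2}=1+x,
\]
the second of which follows from $1+y=\tfrac{2}{1+w}$ and $1+w^2=\tfrac{2}{1+x}$. Granting these, substitute $y=\tfrac{1-w}{1+w}$ into the factored form of \eqref{eq:FS} to get $D_n\!\bigl(\tfrac{x}{1+x}\bigr)$ expressed in terms of $A_n\!\bigl(\tfrac{1-w}{1+w}\bigr)$, then feed the result into \eqref{R-D}. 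Collecting the powers of $(1+x)$ and of $(1+w)$ that accumulate gives precisely $R_n(x)=\bigl(\tfrac{1+x}{2}\bigr)^{n-1}(1+w)^{n+1}A_n\!\bigl(\tfrac{1-w}{1+w}\bigr)$.

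The main obstacle I anticipate is not conceptual but bookkeeping: keeping the floor-indexed summation ranges and the exponents $n-2-i$ versus $n-1-2k$ straight while passing between $R_n$, $D_n$, and $A_n$, and making sure the prefactors $2$, $2^k$, and $\bigl(\tfrac{1+x}{2}\bigr)^{n-1}$ end up with the right powers. A secondary point worth a sentence is that $w$ is a formal/analytic square root: the identity is an identity of rational functions in $w$, and one should note $A_n$ is a polynomial so no branch issues arise — the final expression is symmetric enough in $w\mapsto -w$ (after clearing) to be a genuine polynomial identity in $x$. Once the substitution identities for $\tfrac{2y}{(1+y)^2}$ and $1+y$ are nailed down, the rest is a short, essentially forced calculation.
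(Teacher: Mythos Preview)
Your approach is correct and is exactly what the paper does: it states the David--Barton formula as an immediate consequence of \eqref{eq-2} (equivalently \eqref{R-D}) and the Foata--Sch\"utzenberger expansion \eqref{eq:FS}, eliminating the common factors $d_{n,k}$ via the substitution $y=\tfrac{1-w}{1+w}$, which indeed yields $\tfrac{2y}{(1+y)^2}=\tfrac{x}{1+x}$. One small slip to fix when you write it up: the side identity $\tfrac{(1+y)^2}{2}=1+x$ is false (you have conflated $(1+w)^2$ with $1+w^2$); what you actually need, and correctly state, is $1+y=\tfrac{2}{1+w}$, from which the clean form $A_n(y)=\tfrac{(1+y)^{n+1}}{2}\,D_n\!\bigl(\tfrac{2y}{(1+y)^2}\bigr)$ and the final bookkeeping follow.
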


Combining the combinatorial proofs of \eqref{eq-2} and \eqref{eq:FS}, 
we thereby obtain a \emph{purely combinatorial proof} of \eqref{Run-Euler}.

\section{Alternating runs  in type B}\label{sec:typeB}
\subsection{Definitions}
Let \(\B_n\) denote the hyperoctahedral group of signed permutations of \([n]\), that is,   the set of bijections  \(\pi\) of 
\([\pm n]:=[n]\cup\{-1,-2,\ldots,-n\}\) satisfying 
\(\pi(-i)=-\pi(i)\) for every \(i\in [n]\). 
Equivalently, each \(\pi\in\B_n\) can be written as a word
\(\pi_1\pi_2\cdots\pi_n\) where
\(|\pi_1|,\ldots,|\pi_n|\) form a permutation of \([n]\).

For \(\pi\in\B_n\) and \(1\le i\le n\), we denote \(\pi(i)\) simply by \(\pi_i\).
For \(1\le k\le n\), we also write \(-k\) as \(\bar{k}\).
For a signed permutation \(\pi=\pi_1\pi_2\cdots\pi_n\in\B_n\), define
\[
\Neg(\pi)
:=\{\pi_i\in[\pm n]\mid \pi_i<0\},
\]
the set of entries of \(\pi\) that appear with a negative sign.

 For \(\pi\in\B_n\), set \(\pi_0=0\).  
We say that the word \(0\pi=\pi_0\pi_1\cdots\pi_n\) \emph{changes direction} at an index
\(i\) (\(1\le i\le n-1\)) if either \(\pi_{i-1}<\pi_i>\pi_{i+1}\) or
\(\pi_{i-1}>\pi_i<\pi_{i+1}\).
We say that \(\pi\) has \(k\) \emph{alternating runs}, and write \(\run_B(\pi)=k\), if
there are exactly \(k-1\) indices at which \(0\pi\) changes direction.

For example, the signed permutation
\[
0514\bar{3}\bar{6}2
\]
has \(5\) alternating runs:
\[
05,\quad 51,\quad 14,\quad 4\bar{3}\bar{6},\quad \bar{6}2.
\]


Define the following polynomials:
\begin{subequations}
\begin{align}
R_n^{B,>}(x)&=\sum_{\pi\in\B_n^{>}} x^{\run_B(\pi)},\\
R_n^{B,<}(x)&=\sum_{\pi\in\B_n^{<}} x^{\run_B(\pi)},\\
R_n^B(x)&=\sum_{\pi\in\B_n} x^{\run_B(\pi)},
\end{align}
\end{subequations}
where
\[
\B_n^{>}=\{\pi\in\B_n:\ \pi_1>0\},
\qquad
\B_n^{<}=\{\pi\in\B_n:\ \pi_1<0\}.
\]


The first few polynomials in these three families are listed in
Table~\ref{table-2}.

\begin{table}[t]
\begin{center}
\begin{tabular}{c|c|c|c}
&\(R_n^B(x)\)&\(R_n^{B,>}(x)\)&\(R_n^{B,<}(x)\)\\
\hline
\(n=2\) & \(2x+6x^2\) & \(x+3x^2\) & \(x+3x^2\)\\
\(n=3\) & \(2x+24x^2+22x^3\) & \(x+12x^2+11x^3\) & \(x+12x^2+11x^3\)\\
\(n=4\) & \(2x+78x^2+190x^3+114x^4\) &
         \(x+39x^2+95x^3+57x^4\) &
         \(x+39x^2+95x^3+57x^4\)\\
\hline
\end{tabular}
\vspace{8pt}
\caption{First terms of \(R_n^B(x)\), \(R_n^{B,>}(x)\), and \(R_n^{B,<}(x)\).}
\label{table-2}
\end{center}
\end{table}

Let \(S=\{0,s_1,\ldots,s_n\}\) be a subset of
\(\{0,\,\pm1,\,\pm2,\,\ldots,\,\pm n\}\), where each \(s_i\) is either \(i\) or
\(\bar{i}\) for \(i\in[n]\).
Let \(\bhr_n\) denote the set of HR--trees whose vertex labels are taken from
\(S\), with the additional requirement that the leftmost node is labeled \(0\).
The HR--trees corresponding to the elements of \(\B_2\) are illustrated in
Figure~\ref{f-1}.

\begin{figure}[ht]
\begin{tikzpicture}
\draw (0,0)--(1,1);
\draw (1,1)--(2,0);
\fill (0,0) circle (2pt);
\fill (1,1) circle (2pt);
\fill (2,0) circle (2pt);
\node[below] at (0,0) {$0$};
\node[below] at (2,0) {$\bar{2}$};
\node[above] at (1,1) {$1$};
\node[below] at (1,-1) {$T_{01\bar{2}}$};
\draw (4,0)--(5,1)--(6,0);
\fill (4,0) circle (2pt);
\fill (5,1) circle (2pt);
\fill (6,0) circle (2pt);
\node[below] at (4,0) {$0$};
\node[above] at (5,1) {$2$};
\node[below] at (6,0) {$\bar{1}$};
\node[below] at (5,-1) {$T_{02\bar{1}}$};
\draw (8,1)--(8.5,0.5)--(9,0);
\fill (8,1) circle (2pt);
\fill (8.5,0.5) circle (2pt);
\fill (9,0) circle (2pt);
\node[above] at (8,1) {$0$};
\node[right] at (8.5,0.5) {$2$};
\node[below] at (9,0) {$1$};
\node[below] at (8.5,-1) {$T_{021}$};
\draw (11,1)--(11.5,0.5)--(12,0);
\fill (11,1) circle (2pt);
\fill(11.5,0.5) circle (2pt);
\fill(12,0) circle (2pt);
\node[above] at (11,1) {$0$};
\node[right] at (11.5,0.5) {$1$};
\node[below] at (12,0) {$2$};
\node[below] at (11.5,-1) {$T_{012}$};
\draw (0,-4)--(1,-3)--(2,-4);
\fill (0,-4) circle (2pt);
\fill (1,-3) circle (2pt);
\fill (2,-4) circle (2pt);
\node[below] at (0,-4) {$0$};
\node[above] at (1,-3) {$\bar{1}$};
\node[below] at (2,-4) {$2$};
\node[below] at (1,-5) {$T_{0\bar{1}2}$};
\draw (4,-4)--(5,-3)--(6,-4);
\fill (4,-4) circle (2pt);
\fill (5,-3) circle (2pt);
\fill (6,-4) circle (2pt);
\node[below] at (4,-4) {$0$};
\node[above] at (5,-3) {$\bar{2}$};
\node[below] at (6,-4) {$1$};
\node[below] at (5,-5) {$T_{0\bar{2}1}$};
\draw (8,-3)--(8.5,-3.5)--(9,-4);
\fill (8,-3) circle (2pt);
\fill (8.5,-3.5) circle (2pt);
\fill (9,-4) circle (2pt);
\node[above] at (8,-3) {$0$};
\node[right] at (8.5,-3.5) {$\bar{1}$};
\node[below] at (9,-4) {$\bar{2}$};
\node[below] at (8.5,-5) {$T_{0\bar{1}\bar{2}}$};
\draw (11,-3)--(11.5,-3.5)--(12,-4);
\fill (11,-3) circle (2pt);
\fill(11.5,-3.5) circle (2pt);
\fill(12,-4) circle (2pt);
\node[above] at (11,-3) {$0$};
\node[right] at (11.5,-3.5) {$\bar{2}$};
\node[below] at (12,-4) {$\bar{1}$};
\node[below] at (11.5,-5) {$T_{0\bar{2}\bar{1}}$};
\end{tikzpicture}
\caption{The HR--trees corresponding  to the permutations in $\B_2$ }\label{f-1}
\end{figure}
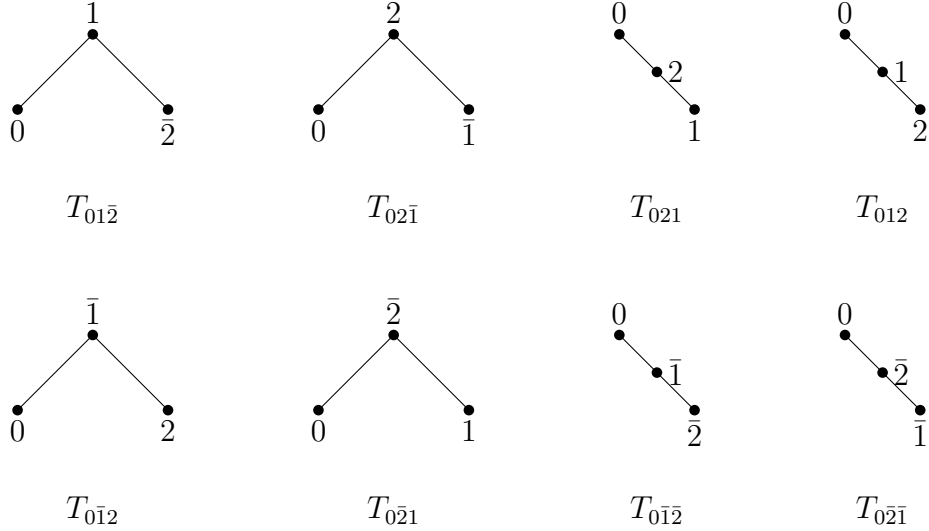

Similar to the type~A case, one readily checks that the following holds.

\begin{theorem}
The left-first order reading defines a bijection between \(\bhr_n\) and \(\B_n\).
\end{theorem}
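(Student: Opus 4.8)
The plan is to mimic the type~A bijection verbatim, using the recursive min--max construction adapted to the fact that the leftmost label must be~$0$. First I would define, for a signed permutation $\pi=\pi_1\cdots\pi_n\in\B_n$, the word $0\pi=\pi_0\pi_1\cdots\pi_n$ with $\pi_0=0$, and build a tree $T_{0\pi}$ by the same recursive rule as in type~A applied to the word $0\pi$: locate the least index $i$ such that $\pi_i$ attains the minimum or maximum of $\{\pi_0,\pi_1,\ldots,\pi_n\}$ over the totally ordered label set $\{0,s_1,\ldots,s_n\}$, make $\pi_i$ the root, and recurse on the left and right factors. Since $\pi_0=0$ is the leftmost letter, one observes that either $0$ itself is an extremum (when all $s_i$ have the same sign) or the first extremum occurs at some later position; in all cases the recursion terminates and produces a rooted topological binary tree whose labels are exactly $\{0,s_1,\ldots,s_n\}$ and which satisfies the min--max property. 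I would then check the HR-property holds automatically, exactly as in type~A, because the construction always selects the \emph{first} extremal letter, forcing the complementary extremum into the right subtree.

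Next I would verify that the leftmost node of $T_{0\pi}$ is labeled~$0$. This is the key structural point distinguishing $\bhr_n$ from the plain set of HR--trees on the label set $\{0,s_1,\ldots,s_n\}$: since $0$ is the first letter of the word $0\pi$, in the recursive construction $0$ always lands in the left subtree (or is itself the root only in the degenerate passage to a one-node tree), and descending leftmost repeatedly reaches the node reading $0$ first; hence $0$ labels the leftmost leaf. Conversely, given $T\in\bhr_n$, the left-first order reading $w(T)$ begins with $0$ (being leftmost), so deleting the initial $0$ yields a word $\pi_1\cdots\pi_n$ whose absolute values are a permutation of $[n]$, i.e.\ an element of $\B_n$.

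The proof that these two maps are mutually inverse is then word-for-word the type~A argument: the left-first reading of the tree built from $0\pi$ recovers $0\pi$ (this is the content of the Hetyei--Reiner theorem quoted in the excerpt, applied to the totally ordered label set $\{0,s_1,\ldots,s_n\}$ rather than $[n]$), and the tree built from the reading of $T$ recovers $T$. Since the reading always starts with $0$ and the recursive construction always puts the first letter $0$ into the leftmost position, the bijection restricts correctly to $\bhr_n \leftrightarrow \B_n$. Finally I would remark that one must check $\bhr_n$ really consists of \emph{all} HR--trees on such a label set with leftmost label $0$, with no further constraint: every such tree is hit, because its reading is a legitimate word $0\pi_1\cdots\pi_n$.

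The main obstacle is a bookkeeping subtlety rather than a conceptual one: one must be careful that the recursive construction, when run on $0\pi$, never tries to ``reuse'' the letter $0$ or produce a tree with $0$ not at the leftmost leaf, and that the min--max property is stated with respect to the correct totally ordered label set $\{0 < \text{signed labels}\}$ rather than $[n]$. Once one fixes the ordered label alphabet $S=\{0,s_1,\ldots,s_n\}$ and observes that the Hetyei--Reiner bijection and the HR-property are insensitive to \emph{which} totally ordered set the labels come from (only the order matters), the type~A proof transfers with no real change, and the only genuinely new verification is the claim that ``leftmost letter is $0$'' on the word side corresponds exactly to ``leftmost node is labeled $0$'' on the tree side, which follows immediately from the leftmost-first nature of both the reading and the construction.
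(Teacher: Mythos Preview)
Your proposal is correct and is essentially the same approach the paper takes: the paper gives no proof at all, simply stating that the result follows ``similar to the type~A case,'' which is precisely your reduction to the Hetyei--Reiner bijection on the totally ordered label set $S=\{0,s_1,\ldots,s_n\}$ together with the observation that the condition ``word begins with $0$'' corresponds to ``leftmost node is labeled $0$.'' One small slip: you write that $0$ labels the leftmost \emph{leaf}, but in fact $0$ may be an inner node (e.g.\ the root in $T_{012}$, where $0$ is the minimum of $\{0,1,2\}$ and has a nonempty right subtree); the correct statement, which you use elsewhere, is that $0$ is the leftmost \emph{node}.
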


Throughout the remainder of the paper, we adopt the convention that
\(\pi_0=0\) for every \(\pi\in\B_n\).

\begin{definition}[Type B André permutation]\label{B-André}
A signed permutation \(\pi=\pi_1\pi_2\cdots\pi_n\in\B_n\) is called a
\emph{type~B André permutation} if the following two conditions hold:
\begin{itemize}
\item[(i)] \(\pi\) has no double descents, and no final descent; that is,
\(\pi_{n-1}<\pi_n\).

\item[(ii)] For every valley \(i\in\{1,\ldots,n-1\}\) of \(\pi\), consider the
\(\pi_i\)-factorization \((w_1,w_2,\pi_i,w_4,w_5)\) of \(\pi\).  
Then the maximum letter of \(w_2\) is smaller than the maximum letter of \(w_4\).
\end{itemize}

Let \(\A_n^{B>}\) (resp. \(\A_n^{B<}\)) denote the subset of type~B André
permutations \(\pi\in\B_n\) with \(\pi_1>0\) (resp. \(\pi_1<0\)), and set
\[
\A_n^B=\A_n^{B>}\cup \A_n^{B<}.
\]
\end{definition}

For instance,
\[
\A_2^B=\{\,012,\;0\bar{1}2,\;0\bar{2}1\,\}.
\]

\begin{remark}
Alternative definitions of signed André permutations can be found in
\cite{SZ20, EF24}.
\end{remark}

\subsection{Modified type B HR--action}\label{subsection-MHR}
For $\pi=\pi_1\ldots \pi_n\in\B_n$, define
\begin{subequations}
    \begin{equation}\label{def:bar pi}
    \overline{\pi}
    = \overline{\pi}_1 \ldots \overline{\pi}_n,
\end{equation}
where the bar indicates changing the sign of each entry.  
For any subset $S\subseteq\B_n$, define
\begin{equation}\label{def:bar set}
    \overline{S}
    = \{\overline{\pi} : \pi\in S\}.
\end{equation}
\end{subequations}
\begin{lem}\label{andre-B}
 A permutation $\pi\in\B_n$ is a type B André permutation   if and only if all inner nodes of $T_\pi$ are $\min$-nodes.
 \end{lem}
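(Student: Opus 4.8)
The plan is to mimic the proof of Lemma~\ref{lemma-HR} (the type~A statement of Hetyei and Reiner), adapting it to signed permutations by tracking the role of the leftmost label $0$. Recall that in the type~B setting the word $0\pi$ carries the convention $\pi_0=0$, and the HR--tree $T_\pi\in\bhr_n$ has its leftmost node labeled $0$; moreover a valley of $\pi$ (with $\pi_0=0$) is exactly an inner node of $T_\pi$ with two children, while an inner node with one child corresponds to an ascent that is not a valley. So the equivalence to be proved has the shape: condition (i) of Definition~\ref{B-André} forbids the configurations that would force a non-$\min$ inner node, and condition (ii) is precisely the $\pi_i$--factorization constraint that Lemma~\ref{lemma-HR} uses to recognize $\min$-nodes with two children.

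First I would establish the ``only if'' direction. Assume $\pi\in\A_n^B$. Because $\pi$ has no final descent and no double descent, every inner node of $T_\pi$ that has exactly one child sits at an ascent $\pi_{i-1}<\pi_i$ with $\pi_i$ the minimum of its subtree (the single child hangs to the right and consists of larger labels read after $\pi_i$); here one has to check the boundary interaction with the root label $0$, which only helps since $0$ is the global minimum on the left branch. For an inner node with two children, i.e.\ a valley $i$ of $\pi$, the $\pi_i$--factorization $(w_1,w_2,\pi_i,w_4,w_5)$ describes exactly the left and right subtrees of the node labeled $\pi_i$: the left subtree is read off from $w_2$ and the right subtree from $w_4$ (together with what $w_1,w_5$ attach above). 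Condition (ii), that $\max w_2<\max w_4$, is then equivalent to saying the maximum label in the subtree $T_\pi(\pi_i)$ lies in the right subtree $T_\pi^r(\pi_i)$; combined with $\pi_i$ being a valley this forces $\pi_i$ to be the minimum of its subtree, i.e.\ a $\min$-node. Iterating over all valleys and all one-child inner nodes gives that every inner node is a $\min$-node.

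For the ``if'' direction, assume all inner nodes of $T_\pi$ are $\min$-nodes and run the implications backwards. If some inner node is a one-child node at a descent, or if $\pi$ ended in a descent, that node would be a $\max$-node, contradicting the hypothesis; this yields condition (i). For a valley $i$, the fact that $\pi_i$ is a $\min$-node means the maximum of $T_\pi(\pi_i)$ lies strictly in the right subtree (it cannot be $\pi_i$ itself), which unwinds, via the identification of the left/right subtrees with $w_2$ and $w_4$, to $\max w_2<\max w_4$, i.e.\ condition (ii). Hence $\pi\in\A_n^B$.

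The main obstacle, and the only place where genuine care beyond the type~A argument is needed, is the handling of the forced leftmost label $0$: one must verify that the translation ``valley of $0\pi$ $\leftrightarrow$ two-child inner node'' and ``the $\pi_i$--factorization computes the two subtrees of the node $\pi_i$'' remains valid when the block $w_1$ abuts the mandatory root $0$, and in particular that the root itself (labeled $0$) is automatically a $\min$-node and never creates an exceptional case. Since $0$ is smaller than every entry of $\pi$ and occupies the leftmost position, this check is straightforward but should be spelled out; after that, the argument is word-for-word the reasoning behind Lemma~\ref{lemma-HR}, with ``André permutation'' replaced by ``type~B André permutation'' and the $\pi_i$--factorization (Definition of $\pi_i$--factorization) applied verbatim to signed words over the totally ordered alphabet $\{0,\pm1,\ldots,\pm n\}$.
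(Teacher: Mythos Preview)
Your overall plan — prove both implications by translating between tree structure and the two Andr\'e conditions — is the right shape, but the ``only if'' direction has a real gap, and your handling of the label~$0$ contains a factual error.

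First, the error about $0$. You write that ``$0$ is smaller than every entry of $\pi$'', and conclude that the node labeled $0$ is automatically a $\min$--node. In $\B_n$ the entries $\pi_i$ may be negative, so this is false; for instance in $T_{0\bar1\bar2}$ the root $0$ is a $\max$--node. Hence the case of the node $0$ is not the trivial check you describe, and in the ``only if'' direction it must be ruled out by the Andr\'e conditions just like any other potential $\max$--node.

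Second, and more seriously, your direct case analysis for the forward implication does not go through. You argue that a one--child inner node is forced to be a $\min$--node \emph{solely} by condition~(i) (no double descent, no final descent). This is not true. Take $n=3$ and $\pi=3\,1\,2$ (all entries positive): the HR--tree has $0$ a $\min$--node with right child $3$, which is a one--child $\max$--node, then $1$, then $2$. The word $0\,3\,1\,2$ has no double descent and no final descent, so condition~(i) holds; what fails is condition~(ii) at the valley $i=2$, where $\max w_2=3>\max w_4=2$. So one--child $\max$--nodes are excluded only through condition~(ii), not through~(i). Similarly, your sentence ``For an inner node with two children, i.e.\ a valley $i$ of $\pi$'' is circular: a two--child $\max$--node is a \emph{peak}, not a valley, and condition~(ii) says nothing about peaks directly. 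The paper avoids both problems by a different device: assume there is a $\max$--node, take the one with the \emph{largest} label $x$, and look at the first letter of its right subtree. That letter is preceded by $x$, hence sits at a descent; by~(i) it cannot be a double descent or a final descent, so it is a valley, and now condition~(ii) forces $\max w_4>x$, contradicting maximality of $x$. This single extremal argument handles one--child and two--child $\max$--nodes simultaneously, which your case split does not.

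Your ``if'' direction is essentially fine and matches the paper's reasoning.
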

\begin{proof}
Suppose that $\pi$ is a type B André permutation. Take $x$ to be the largest among all $\max$-nodes of the tree $T_\pi$, let $uxv$ be the subword of $\pi$ obtained by reading in left-first order the subtree that has $x$ as its root, and $u,v$ are subwords of $\pi$. Since the inner nodes of $\min-\max$ tree always have right subtree, so $v$ is not an empty word. Take the first letter $\mathsf{first}(v)$ of $v$ and consider  $\mathsf{first}(v)$-factorization $(w_1,w_2,\mathsf{first}(v),w_4,w_5)$ of $\pi$, since $x\in w_2$, so $w_4$ is not empty, otherwise $\mathsf{first}(v)$ is a double descent or $n-1$ is a descent which yields that $\pi$ is not an André permutation, so $\mathsf{first}(v)$ is a valley of $\pi$ and $x\leq\max(w_2)<\max(w_4)$ this is a contradiction with $x$ is the largest among all $\max$-nodes of the tree $T_\pi$.
\par
Suppose that all the inner nodes of $T_\pi$ are $\min$-nodes. So the valleys of $\pi$ must be the inner nodes of $T_\pi$ with two children. Thus for each valley $x$ of $\pi$, with $x$-factorization $(w_1,w_2,x,w_4,w_5)$, actually, the subword $w_2xw_4$ is obtained by reading the subtree whose root is $x$ in left-first order. We must have $\max(w_2)<\max(w_4)$ for otherwise $\max(w_2)$ would have been the root of this subtree. If $x$ is an inner nodes with one child, since all the inner nodes are $\min$-nodes, so $x$ either is the first letter of $\pi$ or it is a double ascent. If $x$ is a leaf of $\pi$, also by all the non zero nodes are $\min$-nodes, so $x$ is either a peak of $\pi$ or the last letter of $\pi$, so the $n-1$ position can not be a descent. So by Definition \ref{B-André}, $\pi$ is a type B André permutation. 
\end{proof}
\begin{fact}\label{fact1}
If $\pi\in\mathcal{A}_n^B$ has $k$ valleys, then $T_\pi$ has $k$ inner nodes with $2$ children.
\end{fact}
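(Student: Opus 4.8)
The plan is to argue exactly as in the type~A situation (Lemma~\ref{lemma-HR} giving the analogous statement there), using the characterization of type~B André permutations provided by Lemma~\ref{andre-B} together with the combinatorial description of HR--trees. First I would recall that, by Lemma~\ref{andre-B}, if $\pi\in\mathcal{A}_n^B$ then every inner node of the HR--tree $T_\pi$ is a $\min$-node; in particular the labels of $\pi$ at valley positions are precisely the inner nodes of $T_\pi$ that have two children (this is the first bullet in the proof of Lemma~\ref{andre-B}, where it is observed that the valleys of $\pi$ must be inner nodes of $T_\pi$ with two children). Conversely, I would note that an inner node with two children always sits at a peak or a valley of $w(T_\pi)$ (as already used in Proposition~\ref{p3}), and since all inner nodes of $T_\pi$ are $\min$-nodes, such a node cannot be a peak; hence it must be a valley of $\pi$.

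The two implications together give a bijection: for $\pi\in\mathcal{A}_n^B$, the set of valley positions of $\pi$ coincides exactly with the set of positions of inner nodes of $T_\pi$ having two children. Therefore the number of valleys of $\pi$ equals the number of inner nodes of $T_\pi$ with two children, which is the claim. I would write this out in one short paragraph, citing Lemma~\ref{andre-B} for the $\min$-node property and reusing the peak/valley observation from the proof of Proposition~\ref{p3}.

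The one subtlety I would be careful about is the boundary convention $\pi_0 = 0$ in type~B, which affects whether position~$1$ counts as a valley (it does, when $\pi_1 < \pi_2$, since $\pi_0 = 0 < \pi_1$ fails and we need $\pi_0 > \pi_1 < \pi_2$, i.e. $\pi_1 < 0$) and how this matches the structure of the leftmost part of $T_\pi$, whose leftmost node is labeled $0$. Concretely, I would check that a valley at position~$i$ (with $1 \le i \le n-1$ and $\pi_{i-1} > \pi_i < \pi_{i+1}$, using $\pi_0 = 0$) corresponds under the HR--tree recursion to $\pi_i$ being the label of a node with both a nonempty left subtree (coming from $w_2$, the maximal suffix of $\pi_0\cdots\pi_{i-1}$ with letters $> \pi_i$, which is nonempty since $\pi_{i-1} > \pi_i$) and a nonempty right subtree (coming from $w_4$, nonempty since $\pi_{i+1} > \pi_i$). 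This is the routine part; the main obstacle, such as it is, is just making sure the $\pi_0 = 0$ convention is consistently applied so that the count of valleys in type~B lines up with the tree structure, but no new idea beyond Lemma~\ref{andre-B} is needed.

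\begin{proof}
Let $\pi\in\mathcal{A}_n^B$ with $\pi_0=0$. By Lemma~\ref{andre-B}, every inner node of $T_\pi$ is a $\min$-node. As observed in the proof of Proposition~\ref{p3}, any node of $T_\pi$ with two children is a peak or a valley of $w(T_\pi)=\pi$; since all inner nodes are $\min$-nodes, such a node cannot be a peak, hence it is a valley of $\pi$. Conversely, if $i$ is a valley of $\pi$, i.e.\ $\pi_{i-1}>\pi_i<\pi_{i+1}$ for some $1\le i\le n-1$ (with the convention $\pi_0=0$), then in the $\pi_i$-factorization $(w_1,w_2,\pi_i,w_4,w_5)$ the factor $w_2$ is nonempty (because $\pi_{i-1}>\pi_i$) and the factor $w_4$ is nonempty (because $\pi_{i+1}>\pi_i$); since $w_2\pi_i w_4$ is read off from the subtree of $T_\pi$ rooted at the node labeled $\pi_i$, this node has both a nonempty left subtree and a nonempty right subtree, so it is an inner node with two children. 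Thus the valley positions of $\pi$ are in bijection with the positions of the inner nodes of $T_\pi$ having two children, and in particular if $\pi$ has $k$ valleys then $T_\pi$ has $k$ inner nodes with two children.
\end{proof}
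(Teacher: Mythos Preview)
Your proof is correct and follows exactly the reasoning the paper has in mind. In the paper, Fact~\ref{fact1} is stated without its own proof because it is an immediate byproduct of the proof of Lemma~\ref{andre-B}: there it is argued that for $\pi\in\mathcal{A}_n^B$ the valleys of $\pi$ are precisely the inner nodes of $T_\pi$ with two children, using (just as you do) that all inner nodes are $\min$-nodes and that for a valley $x$ the subword $w_2xw_4$ from the $x$-factorization is the left-first reading of the subtree rooted at $x$. Your write-up simply makes explicit the two directions of this correspondence, so there is nothing to add.
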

 
\begin{defi}[Modified HR--action]\label{Def-1}
Let $T_\pi\in\bhr_n$. For $1\leq i\leq n$, the operator $\widehat{\psi}_i$ acting on the tree $T_\pi$ is defined as follows.
\begin{enumerate}
\item[(a)]
For $0$ is an inner node of $T_\pi$; or $0$ is a leaf of $T_\pi$ and $\pi_i$ is not the parent of $0$.
\begin{itemize}
\item 
If $\pi_i$ is a $\min$-node, then replace $\pi_i$ by the largest element of $T_{\pi}^r(\pi_i)$, permute the remaining elements of $T_{\pi}^r(\pi_i)$ such that they keep their same relative orders and all other nodes in $T_\pi$ are fixed.
\item
If $\pi_i$ is a $\max$-node, then replace $\pi_i$ by the smallest element of $T_{\pi}^r(\pi_i)$ such that they keep their same relative order, and all other nodes in $T_\pi$ are fixed.
\end{itemize}
\vskip 1.5 mm
\item[(b)]
For $0$ is a leaf of $T_\pi$ and $\pi_i$ is the parent of $0$.
\begin{itemize}
\item
$\widehat{\psi}_i(T_\pi)=T_\pi$.
\end{itemize}
\end{enumerate}
\end{defi}
It is clear that each \(\widehat{\psi}_i\) is an involution acting on
\(\bhr_n\), and that \(\widehat{\psi}_i\) and \(\widehat{\psi}_j\) commute for
all \(i,j\in[n]\).  
Hence, for any subset \(S\subseteq[n]\), we may define the map
\[
\widehat{\psi}_S:\bhr_n\longrightarrow\bhr_n,\qquad
\widehat{\psi}_S(T_\pi)=\prod_{i\in S}\widehat{\psi}_i(T_\pi).
\]
Thus the group \(\mathbb{Z}_2^{\,n}\) acts on \(\bhr_n\) via the maps
\(\widehat{\psi}_S\) for \(S\subseteq[n]\).

For \(\pi\in\B_n\), let
\[
\mathrm{Orb}^*(T_\pi)=\{\,g(T_\pi): g\in \mathbb{Z}_2^{\,n}\,\}
\]
denote the orbit of \(T_\pi\) under the \emph{modified HR--action} (MHR--action).

\begin{proposition}
For $\pi\in\B_n^>$ (resp. $\pi\in\B_n^<$), we have,
for any $T\in\mathrm{Orb}^*(T_\pi)$, $w(T)\in\B_n^>$ (resp. $w(T)\in\B_n^<$).
\end{proposition}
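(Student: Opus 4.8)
The claim is that the modified HR--action preserves the sign of the first letter $\pi_1$. The plan is to show that no single generator $\widehat\psi_i$ can change the sign of $\pi_1$, and then conclude for arbitrary $g\in\mathbb{Z}_2^{\,n}$ by composing. Since $\widehat\psi_i$ only relabels the nodes in $\{v_i\}\cup V(T^r(v_i))$ (in both clauses (a) and (b) of Definition~\ref{Def-1}), the letter $\pi_1$ is affected only if the node carrying the first letter of the left-first reading lies in this set. The first step is therefore to identify, in any $T\in\bhr_n$, which node carries $\pi_1$: by the recursive construction of $T_w$, the leftmost node in the left-first traversal is the label $\pi_1$, and since the leftmost node of every tree in $\bhr_n$ is labeled $0$, the node carrying $\pi_1$ is precisely the leftmost leaf of the tree. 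Note $\pi_1$ is the value \emph{at} the leftmost non-root position of the reading word, i.e.\ the first entry of $w(T)$ read after $0$ is removed; equivalently $\pi_1$ is the label of the leftmost node among those read before $0$.

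The key structural observation is: the leftmost leaf (the node labeled $0$) is never in the right subtree $T^r(v_i)$ of any node $v_i$, because the leftmost leaf lies on the leftmost branch of $T$, and $T^r(v_i)$ consists only of nodes to the right of $v_i$'s subtree's left spine. Hence $0$ is relabeled by $\widehat\psi_i$ only if $v_i$ is the parent of $0$. But that is exactly the excluded case: in clause (b) of Definition~\ref{Def-1}, when $0$ is a leaf and $\pi_i$ is the parent of $0$, one sets $\widehat\psi_i(T_\pi)=T_\pi$, so nothing changes; and in clause (a), when $0$ is an inner node, $0$ cannot be a leaf at all and its position is fixed, while the case "$0$ is a leaf and $\pi_i$ is not the parent of $0$" leaves $0$ outside the relabeled set $\{v_i\}\cup V(T^r(v_i))$. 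In every case the node labeled $0$ stays put, so the left-first reading word still begins with $0$, and moreover $\pi_1$ — the label read immediately around $0$ on the left spine — is also untouched, since $\pi_1$'s node is likewise on the leftmost branch and hence not in any $T^r(v_i)$ unless $v_i$ is $\pi_1$'s node or an ancestor of it on the spine, neither of which puts $\pi_1$ inside $T^r(v_i)$. Therefore $\mathrm{sgn}(\pi_1)$ is invariant under each $\widehat\psi_i$.

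The final step is routine: since $\widehat\psi_i$ and $\widehat\psi_j$ commute and each fixes the sign of the first letter, any $g=\widehat\psi_S$ with $S\subseteq[n]$ fixes $\mathrm{sgn}(w(T)_1)$, so $w(g(T_\pi))\in\B_n^>$ whenever $\pi\in\B_n^>$, and symmetrically for $\B_n^<$; this gives the claim for the whole orbit $\mathrm{Orb}^*(T_\pi)$. The main obstacle — really the only point requiring care — is making the claim "$0$ and $\pi_1$ sit on the leftmost branch, hence outside every right subtree $T^r(v_i)$" fully precise: one must verify from the recursive definition of $T_w$ that the first two letters read in left-first order correspond to nodes on the left spine of $T$, and that the left spine meets each $T^r(v_i)$ in at most the root $v_i$ itself. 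This is a short induction on the tree structure, but it is where the argument actually lives.
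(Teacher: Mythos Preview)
Your argument handles the case ``$0$ is a leaf of $T_\pi$'' correctly and in essentially the same way as the paper: there $\pi_1$ is the parent of $0$, hence sits on the leftmost spine of the whole tree, so it lies in no $T^r(v_i)$; and when $v_i$ is $\pi_1$'s node itself, clause~(b) of Definition~\ref{Def-1} makes $\widehat\psi_i$ trivial. So in that case the \emph{label} $\pi_1$ really is invariant.

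The gap is in the other case, when $0$ is an inner node. Then $0$ has no left child and a nonempty right subtree $T^r(0)$, and the second letter in the left-first reading is the label of the \emph{leftmost node of $T^r(0)$}. That node is \emph{not} on the leftmost branch of $T_\pi$; it lives inside the right subtree of $0$. Your claim ``$\pi_1$'s node is likewise on the leftmost branch'' is therefore false here. Worse, if this node is itself an inner node (it may have a right child), then applying $\widehat\psi_1$ falls under clause~(a), and the relabeled set is $\{v_1\}\cup V(T^r(v_1))$, which \emph{includes} $v_1$; so the label at that position genuinely changes. Saying ``$v_i=\pi_1$'s node \ldots\ does not put $\pi_1$ inside $T^r(v_i)$'' misses that $\widehat\psi_i$ relabels $v_i$ itself, not only $T^r(v_i)$.

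What rescues the statement --- and this is the step the paper supplies --- is a sign argument rather than an invariance argument: since $0$ is the min (resp.\ max) of its own subtree $\{0\}\cup T^r(0)$, every label in $T^r(0)$ is positive (resp.\ negative); otherwise $0$ would be neither the minimum nor the maximum. Hence any $\widehat\psi_i$ that touches the node in position~$1$ only replaces its label by another label from $T^r(0)$, and the sign is preserved even though the label itself is not. You need this observation to close the case ``$0$ is an inner node''.
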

\begin{proof}
For $T\in\mathrm{Orb}^*(T_\pi)$, and $\pi\in\B_n^>$.
If $0$ is a leaf of $T_\pi$ then it must be a left child of $\pi_1$, since $0$ precedes $\pi_1$ in $\pi$. And the subtree $T_{\pi,1}$ is not a part of right subtree of any nodes with two children in $T_\pi$, otherwise this would contradict the $0$ is in the first position of $\pi$. By the definition of MHR action (see, Definition \ref{Def-1}), any $\widehat{\psi}_i$ can not change the node $\pi_1$ in $T_\pi$, that is, for any $T\in\mathrm{Orb}^*(T_\pi)$, $w(T)_1=\pi_1$, i.e., $w(T)\in\B_n^>$.

If $0$ is an inner node of $T_\pi$, then $0$ has only right subtree otherwise this would contradict the $0$ is in the first position of $\pi$. Since $\pi_1>0$ and $\pi_1$ is in $T_{\pi}^r(\pi_0)$ then all the nodes of $T_{\pi}^r(\pi_0)$ are positive, if there are any nodes are negative in $T_{\pi}^r(\pi_0)$, then $0$ is neither the $\max$-nodes nor $\min$-nodes of $T_{\pi}^r(\pi_0)$, this is a contradiction. So, if $\pi_i$ is in $T_{\pi}^r(\pi_0)$, then $\widehat{\psi}_i$ does not change the positive sign of the right child of $0$ which means $w(\widehat{\psi}_i(T_\pi))_1>0$. And if $T_{\pi}(\pi_0)$ is a part of subtree of some node, then this subtree must be a left subtree, otherwise this would contradict the $0$ is in the first position of $\pi$, this shows that for any $\pi_i$ not in $T_{\pi}^r(\pi_0)$, the $\widehat{\psi}_i$ would not change any nodes in $T_{\pi}^r(\pi_0)$.

Summarize the above, for any $T\in\mathrm{Orb}^*(T_\pi)$, we have $w(T)\in\B_n^>$. Along the same lines, one can obtain for $\pi\in\B_n^<$ and for any $T\in\mathrm{Orb}^*(T_\pi)$, we have $w(T)\in\B_n^<$.
\end{proof}
For type B permutations, we also have similar Proposition as type A case, and the proof of the result is along the same lines as Proposition \ref{p3}.
\begin{proposition}\label{p4}
For a permutation $\pi\in\B_n$, if $T_\pi$ has $i$ leaves, then $\pi$ has at least $i$ alternating runs.
\end{proposition}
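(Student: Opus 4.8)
The plan is to mimic the argument used for Proposition~\ref{p3} in the type~A setting, adapting it to the type~B convention in which we prepend $\pi_0=0$ and count alternating runs of the word $0\pi=\pi_0\pi_1\cdots\pi_n$. First I would invoke Fact~\ref{F-1}: if $T_\pi$ has $i$ leaves, then $T_\pi$ has exactly $i-1$ inner nodes with two children. The key structural input is that $T_\pi\in\bhr_n$ is an HR--tree, so by the HR--property every node with two children $s$ has a nonempty right subtree containing the extremal label (the maximum of $T(s)$ if $s$ is a $\min$--node, the minimum if $s$ is a $\max$--node). Hence each such node, when $w(T_\pi)$ is read in left-first order, is flanked on its left by a larger-or-smaller neighbour coming from its left subtree (or from an ancestor) and on its right by a neighbour from its right subtree, producing a genuine local extremum: the label of $s$ is a valley if $s$ is a $\min$--node and a peak if $s$ is a $\max$--node.

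Next I would address the only real point of difference from type~A, namely the leftmost node, which is labelled $0$. Because $\bhr_n$ requires the leftmost node of $T_\pi$ to be $0$, the node labelled $0$ is either the root or lies on the leftmost branch, and in the left-first reading it occupies position $0$. If the node labelled $0$ happens to be an inner node with two children, then reading $0\pi$, the index $0$ is automatically a ``valley'' in the sense of the type~B definition (we have $\pi_0=0<\pi_1$ and, since $0$ is a $\min$--node with a right subtree, the entry immediately following the block under $0$'s left subtree is larger than $0$ as well), so it contributes a direction change just as the other two-children nodes do. The upshot is that each of the $i-1$ two-children nodes of $T_\pi$ yields a distinct index $j\in\{0,1,\ldots,n-1\}$ at which $0\pi$ changes direction. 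These indices are distinct because distinct nodes occupy distinct positions in the left-first reading.

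Then I would conclude: since $0\pi$ changes direction at (at least) $i-1$ distinct indices, by the definition $\run_B(\pi)$ equals one plus the number of such indices, so $\run_B(\pi)\ge (i-1)+1=i$. This is exactly the claimed inequality $\run_B(\pi)\ge i=\leaf(T_\pi)$.

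The main obstacle — and the one place where the type~B argument genuinely needs care beyond copying Proposition~\ref{p3} — is verifying that the $0$ node, when it is a two-children node, really does register as a direction change of the augmented word $0\pi$ and is not somehow ``lost'' at the boundary. This requires checking the sign/order relations at position $0$ against the type~B definition of ``changes direction at an index $i$ with $1\le i\le n-1$''; one must confirm that the relevant index attached to the $0$ node is counted, using the convention $\pi_0=0$ and the HR--property forcing $0$'s right subtree to be nonempty. Once that boundary bookkeeping is settled, the rest is identical to the type~A proof, and I would simply write ``the proof runs along the same lines as Proposition~\ref{p3}'' for the bulk of it, spelling out only the $0$-node case.
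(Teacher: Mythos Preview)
Your approach is essentially the paper's own: invoke Fact~\ref{F-1}, observe that every two--children node of an HR--tree is a peak or valley in the left--first reading, and conclude $\run_B(\pi)\ge i$. The paper in fact gives no separate proof, merely stating that it ``is along the same lines as Proposition~\ref{p3}.''

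Your extended discussion of the boundary case, however, rests on a small confusion. The node labelled~$0$ is by definition the \emph{leftmost} node in the inorder (left--first) reading of $T_\pi$; consequently it cannot have a left child, and hence cannot be a node with two children. So the case ``$0$ is an inner node with two children'' that you worry about simply never occurs, and no special bookkeeping at index~$0$ is needed. (Indeed, if $0$ had a nonempty left subtree, some node of that subtree would precede $0$ in the reading, contradicting leftmostness.) Once you drop that nonexistent case, all $i-1$ two--children nodes automatically sit at positions $j$ with $1\le j\le n-1$ (the rightmost node is always a leaf in an HR--tree by the HR--property), and the type~A argument goes through verbatim.
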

We also have an analogue of Lemma~\ref{L-1} in the type~B setting.

\begin{lem}\label{L-3}
For $\pi\in\B_n^{>}$ (resp.\ $\pi\in\B_n^{<}$), there exists a unique tree 
$T({\pi}^\ast)\in\Orb^{*}(T_\pi)$, with ${\pi}^\ast\in\B_n^{>}$
(resp.\ ${\pi}^\ast\in\B_n^{<}$), such that
\[
\run({\pi}^\ast)=\leaf(T_\pi).
\]
\end{lem}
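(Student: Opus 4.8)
The plan is to mimic, step by step, the argument used for Lemma~\ref{L-1} in the type~A case, replacing the HR--action by the MHR--action $\widehat\psi_i$ and the statistic $\as$ by $\run_B$. First I would fix $\pi\in\B_n^{>}$ (the case $\pi\in\B_n^{<}$ is entirely symmetric via the sign-reversal bijection $\pi\mapsto\overline\pi$ of \eqref{def:bar pi}, which by the previous proposition maps $\mathrm{Orb}^*(T_\pi)$ into $\mathrm{Orb}^*(T_{\overline\pi})$ and commutes with all the relevant statistics). By Fact~\ref{F-1}, $T_\pi$ has $\leaf(T_\pi)-1$ nodes with two children, and by the HR--property each such node is a peak or valley of $0\pi$; combining this with Proposition~\ref{p4} gives the lower bound $\run_B(\sigma)\ge\leaf(T_\pi)$ for every $T_\sigma\in\mathrm{Orb}^*(T_\pi)$. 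So the content of the lemma is that the lower bound is attained, and attained uniquely.

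For existence, let $\pi_{i_1},\ldots,\pi_{i_{k}}$ ($i_1<\cdots<i_k$, where $k=\leaf(T_\pi)-1$) be the labels of the two-children nodes of $T_\pi$ listed by position. Using the MHR--action I would first flip these nodes (applying $\widehat\psi_{i_j}$ as needed) so that, reading left to right, they alternate max-node, min-node, max-node, \ldots; note the leftmost two-children node must be made a peak of $0\pi$ because $\pi_0=0$ sits to its left. This produces some $T_{\pi'}\in\mathrm{Orb}^*(T_\pi)$, and I write $\pi'=w_1\,\pi'_{i_1}\,w_2\,\pi'_{i_2}\cdots w_k\,\pi'_{i_k}\,w_{k+1}$ with each $w_j$ a (possibly empty) consecutive block. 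Then apply the MHR--action once more to force every node whose label lies in $w_j$ to be a min-node when $j$ is odd and a max-node when $j$ is even. Call the result $T(\pi^\ast)$. After these operations, within each block $w_j$ the word $0\pi^\ast$ is monotone in the direction dictated by the parity of $j$, and it turns at precisely the two-children nodes $\pi^\ast_{i_1},\ldots,\pi^\ast_{i_k}$ and nowhere else; hence $\run_B(\pi^\ast)=k+1=\leaf(T_\pi)$, as desired. Here one has to check carefully, using the MHR--action definition (cases (a) and (b) of Definition~\ref{Def-1}), that the prescribed pattern of min/max nodes is actually realizable — in particular that the special treatment of the parent of $0$ when $0$ is a leaf does not obstruct it; this is where the argument differs from type~A and is the step I expect to require the most care.

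For uniqueness, I would argue as in Lemma~\ref{L-1}: any tree in $\mathrm{Orb}^*(T_\pi)$ is obtained from $T(\pi^\ast)$ by some further MHR--action, i.e.\ by flipping the min/max status of some subset of the nodes lying in the blocks $w_j$ (the two-children nodes are forced, since flipping one would change it from peak to valley or vice versa without changing $\run_B$, but re-flipping the blocks to keep $\run_B$ minimal forces the alternating block pattern). Flipping the status of at least one such block node inside a block necessarily creates a new peak or valley of $0\pi$, hence strictly increases $\run_B$ above $\leaf(T_\pi)$. Therefore $T(\pi^\ast)$ is the only tree in the orbit meeting the bound with equality. The main obstacle, as noted, is the bookkeeping around the node $0$: one must verify that the MHR--action has enough freedom to impose the alternating max/min pattern on the two-children nodes and the prescribed pattern on the blocks even when $0$ is a leaf with a designated parent, and that Proposition~\ref{p4} genuinely gives the claimed lower bound in the signed setting with the convention $\pi_0=0$.
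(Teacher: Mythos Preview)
Your proposal is correct and follows essentially the same route as the paper's own proof: list the two-children nodes by position, use the MHR--action to make them alternate max/min (starting with max for $\B_n^{>}$, min for $\B_n^{<}$), then force the intervening blocks $w_j$ to the appropriate min/max pattern, and argue uniqueness by noting that any deviation creates an extra peak or valley. The only cosmetic differences are that the paper treats the $\B_n^{<}$ case in parallel via ``resp.''\ rather than by sign-reversal, and that it front-loads the case analysis on the node labeled~$0$ (leaf versus inner node) that you correctly flagged as the step requiring care.
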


\begin{proof}
For $\pi\in\B_n^{>}$ (resp.\ $\pi\in\B_n^{<}$), if the node labeled $0$ is a leaf of $T_\pi$, then its parent must be $\pi_1$, and $\pi_1$ is the unique $\max$–node (resp.\ $\min$–node) of $T_\pi$ with two children.  
If the node labeled $0$ is an inner node, then it must be a $\min$–node (resp.\ a $\max$–node).  

Proceeding as in the proof of Lemma~\ref{L-1}, let  
\[
\pi_{i_1}\pi_{i_2}\cdots\pi_{i_k}
\]
be the sequence of labels of the nodes of $T_\pi$ with two children, listed in increasing order of positions: \(i_1<i_2<\cdots<i_k\).  
For $1\le j\le k$, by applying the MHR--action if necessary, we may transform $T_\pi$ into a tree  
\(T_{\pi'}\in\Orb^{*}(T_\pi)\), with $\pi'\in\B_n^{>}$ (resp.\ $\pi'\in\B_n^{<}$), such that
\[
\pi'_{i_1},\pi'_{i_2},\ldots,\pi'_{i_k}
\]
are alternatively max–nodes and min–nodes of $T_{\pi'}$ (resp.\ min–nodes and max–nodes).

Write
\[
\pi' = w_{1}\,\pi'_{i_1}\, w_{2}\,\pi'_{i_2}\,\cdots\, w_{k}\,\pi'_{i_k}\, w_{k+1},
\]
where each $w_j$ is (possibly empty) a consecutive block of letters of $\pi'$.

Now apply the MHR--action to $T_{\pi'}$ so that every node whose label lies in $w_j$ becomes a min–node (resp.\ a max–node) when $j$ is odd, and a max–node (resp.\ a min–node) when $j$ is even, for \(1\le j\le k+1\).  
Let $T_{{\pi}^\ast}$ be the resulting tree, with ${\pi}^\ast\in\B_n$.  
By the above construction, we obtain
\[
\run({\pi}^\ast) = \leaf(T_\pi).
\]

Finally, the tree $T_{{\pi}^\ast}$ is the unique element of $\Orb^{*}(T_\pi)$ satisfying this property:  
any other choice of MHR--actions on the blocks $w_j$ would necessarily produce at least one additional peak or valley in the corresponding permutation $\pi'$, thus increasing the number of alternating runs.
\end{proof}

Lemma \ref{L-3} also shows that ${\pi}^\ast$ is the only permutation with the shortest length of the longest alternating subsequence with $T_{{\pi}^\ast}\in\mathrm{Orb}^*(T_\pi)$. As in type A case, we use ${\pi}^\ast$ to denote this unique permutation with $T_{{\pi}^\ast}\in\mathrm{Orb}^*(T_\pi)$.


\begin{lemma}\label{L-4}
Let $\pi\in\B_n^{>}$ (resp.\ $\pi\in\B_n^{<}$), and let 
$T_{\pi^\ast}\in\Orb^{*}(T_\pi)$.  
If $S\subseteq [n-1]$ satisfies $\Lpv(\pi^\ast)\subseteq S$,  
then there exists a unique tree $T_{\sigma}\in\Orb^{*}(T_\pi)$ such that
\[
\Lpv(\sigma)=S.
\]
\end{lemma}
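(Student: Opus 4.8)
The plan is to mimic the proof of Lemma~\ref{L-2} from the type~A setting, adapting it to the modified HR--action and keeping careful track of the role of the label~$0$ and of the sign of $\pi_1$. Fix $\pi\in\B_n^{>}$ (the case $\pi\in\B_n^{<}$ is entirely analogous, with the words ``$\max$'' and ``$\min$'' interchanged throughout, or simply by applying the bar operation of \eqref{def:bar pi}). Write $S=\{s_1<s_2<\cdots<s_k\}\subseteq[n-1]$ with $\Lpv(\pi^\ast)\subseteq S$. Since $\pi_0=0$ and $\pi_1>0$, the index $1$ is automatically a left-peak of any $\sigma\in\Orb^{*}(T_\pi)$, so the left-peaks and valleys of $\sigma$ alternate starting with a left-peak; thus we want $s_1,s_3,s_5,\ldots$ to be left-peaks and $s_2,s_4,\ldots$ to be valleys of the target permutation $\sigma$.

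First I would recall from Lemma~\ref{L-3} that every node of $T_\pi$ with two children is a valley or a peak of $w(T_\pi)$ for any $T\in\Orb^{*}(T_\pi)$, and that the node labeled $0$ is either an inner $\min$-node or the left-child's parent being $\pi_1$ (the unique $\max$-node with two children); in either situation the position $1$ stays a left-peak. Next, starting from $T_{\pi^\ast}$, I would apply $\widehat{\psi}_{s_j}$ as needed to force the node at position $s_j$ to be a $\max$-node when $j$ is odd and a $\min$-node when $j$ is even; since $\Lpv(\pi^\ast)\subseteq S$, each original left-peak/valley of $\pi^\ast$ already sits at one of the $s_j$, so these operations only create the desired new valleys/peaks at the remaining $s_j$ and do not destroy any. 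Then, for each maximal block of positions strictly between $s_{j-1}$ and $s_j$ (with $s_0=1$, $s_{k+1}=n$), I would apply the MHR--action so that every node in that block becomes a $\min$-node if $j$ is odd and a $\max$-node if $j$ is even; because the modified action commutes and is an involution, these choices are independent and well-defined, and the resulting tree $T_\sigma$ has $\Lpv(\sigma)=S$ exactly. Uniqueness follows as in Lemma~\ref{L-2}: the pattern of $\min$/$\max$ labels on all nodes of $T_\sigma$ is completely forced by the requirement $\Lpv(\sigma)=S$ together with the fixed parity (left-peak first), and since each orbit element is determined by its $\min$/$\max$ labeling of nodes with a right subtree, at most one $T_\sigma\in\Orb^{*}(T_\pi)$ can satisfy it.

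The main obstacle I anticipate is handling the node labeled $0$ correctly in case~(b) of the modified action, where $\widehat{\psi}_i$ is the identity when $\pi_i$ is the parent of the leaf $0$: I must check that this ``frozen'' node never needs to be flipped, i.e. that with $\pi_1>0$ the parent of $0$ is forced to be a $\max$-node already and position $1$ is a left-peak regardless, so that the constraint ``$1\in\Lpk(\sigma)$'' is compatible with $S$ and imposes no obstruction. One should also verify the edge cases where a block between consecutive $s_j$ is empty, or where $s_1=1$ (so the first ``block'' $(s_0,s_1)$ is empty), and confirm that the parity convention ($s_0=1$ rather than $s_0=0$ as in type~A) is the right one given $\pi_0=0<\pi_1$. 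Apart from these bookkeeping points about the special vertex $0$ and the sign of $\pi_1$, the argument is a direct transcription of the type~A proof.
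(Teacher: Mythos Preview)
Your plan is essentially the paper's own proof: write $S=\{s_1<\cdots<s_k\}$, force the node at position $s_j$ to be a $\max$-node (resp.\ $\min$-node) for $j$ odd (resp.\ even), and force every node strictly between consecutive $s_j$'s to have the opposite type; uniqueness follows because this min/max pattern is completely determined by $S$.

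Two small corrections. First, the sentence ``the index $1$ is automatically a left-peak of any $\sigma\in\Orb^{*}(T_\pi)$'' is false when $0$ is an inner node of $T_\pi$ (e.g.\ $\pi=12\in\B_2^{>}$); what is true, and all you need, is that $\sigma_0=0<\sigma_1$ forces the \emph{first} element of $\Lpv(\sigma)$ to be a left-peak, so $s_1$ must be a left-peak. Second, precisely because position $1$ is not always a left-peak, you should keep the type~A boundary convention $s_0=0$, $s_{k+1}=n+1$ (as the paper does), not $s_0=1$: otherwise position $1$ belongs to no block and is never forced to a $\min$-node when $1\notin S$. Your own caveat about this convention is on target; with these two fixes the argument goes through exactly as you outline, and the ``frozen'' case~(b) causes no trouble since there $1\in\Lpv(\pi^\ast)\subseteq S$ automatically.
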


\begin{proof}
If the cardinality of $S$ is $k$, we can write it as $S=\{s_1<s_2<\cdots<s_k\}$. We then obtain $\sigma$ by the following process. Because $\sigma_0=0$, and $\pi\in\B_n^>$ (resp. $\pi\in\B_n^<$), that is $\pi_1>0$ (resp. $\pi_1<0$), which means that $s_1$ must be a left peak (resp. left valley). Since the valleys and peaks in a permutation are alternating, by using MHR action, we make $s_i$ a $\max$-node (resp. $\min$-node) if $i$ is odd, and make $s_i$ a $\min$-node (resp. $\max$-node) if $i$ is even. We also make every node with position between $s_{i-1}$ and $s_i$ a min (resp. max) node if $i$ is odd and make every node with position between $s_{i-1}$ and $s_i$ a max (resp. min) node if $i$ is even, here $s_0=0$ and $s_{k+1}=n+1$. Then the corresponding permutation of this HR--tree is the $\sigma$, and from this process we see that this is the only way to obtain a permutation $\sigma$ such that $T_\sigma\in\mathrm{Orb}^*(T_\pi)$ satisfying $\mathrm{Lpv(\sigma)=S}$ by using MHR-action. So, this completes the proof.
\end{proof}
\begin{theorem}\label{T-2}
For $\pi\in\B_n^>$ (resp. $\pi\in\B_n^<$), we have
\begin{align}\label{k-2}
\sum_{T_{\sigma}\in\mathrm{Orb}^*(T_\pi)}x^{\mathrm{run}_B(\sigma)}=x^{\leaf(T_\pi)}(1+x)^{n-\leaf(T_\pi)}.
\end{align}
\end{theorem}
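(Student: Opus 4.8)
\textbf{Proof proposal for Theorem~\ref{T-2}.}
The plan is to follow the blueprint already used for the type~A analogue (Lemma~\ref{T-1}), adapting it to the modified HR--action. Fix $\pi\in\B_n^>$ (the case $\pi\in\B_n^<$ is symmetric, via the bar operation $\pi\mapsto\overline{\pi}$ of \eqref{def:bar pi}, which conjugates $\Orb^*(T_\pi)$ to $\Orb^*(T_{\overline\pi})$ and preserves $\run_B$, $\leaf$). Let $\ell=\leaf(T_\pi)$; since the MHR--action preserves the tree shape, $\ell$ is constant on the orbit $\Orb^*(T_\pi)$, and $T_\pi$ has $\ell-1$ inner nodes with two children.

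First I would invoke Lemma~\ref{L-3} to produce the distinguished tree $T_{\pi^\ast}\in\Orb^*(T_\pi)$ with $\run_B(\pi^\ast)=\ell$. Because a signed permutation changes direction (with the convention $\pi_0=0$) exactly at its left-peaks and valleys, this identity reads $|\Lpv(\pi^\ast)|+1=\ell$, so $|\Lpv(\pi^\ast)|=\ell-1$. Next, the key structural input is Lemma~\ref{L-4}: for every $T_\sigma\in\Orb^*(T_\pi)$ one has $\Lpv(\pi^\ast)\subseteq\Lpv(\sigma)$ (this is exactly the minimality built into the construction of $\pi^\ast$), and conversely, for each $S$ with $\Lpv(\pi^\ast)\subseteq S\subseteq[n-1]$ there is a \emph{unique} $T_\sigma\in\Orb^*(T_\pi)$ with $\Lpv(\sigma)=S$. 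This sets up a bijection between $\Orb^*(T_\pi)$ and the collection of subsets $S$ of $[n-1]$ containing $\Lpv(\pi^\ast)$, i.e. subsets of the form $\Lpv(\pi^\ast)\sqcup S'$ with $S'\subseteq[n-1]\setminus\Lpv(\pi^\ast)$.

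With this bijection in hand the computation is routine. Since $\run_B(\sigma)=|\Lpv(\sigma)|+1$ we get, writing $S'=\Lpv(\sigma)\setminus\Lpv(\pi^\ast)$,
\begin{align*}
\sum_{T_\sigma\in\Orb^*(T_\pi)}x^{\run_B(\sigma)}
&= x^{\run_B(\pi^\ast)}\sum_{T_\sigma\in\Orb^*(T_\pi)}x^{|\Lpv(\sigma)\setminus\Lpv(\pi^\ast)|}
= x^{\ell}\sum_{S'\subseteq[n-1]\setminus\Lpv(\pi^\ast)}x^{|S'|}
= x^{\ell}(1+x)^{\,n-\ell},
\end{align*}
where the last step uses $\bigl|[n-1]\setminus\Lpv(\pi^\ast)\bigr|=(n-1)-(\ell-1)=n-\ell$ and the binomial theorem. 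Substituting $\ell=\leaf(T_\pi)$ yields \eqref{k-2}.

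The main obstacle is not the arithmetic but making sure the two cited lemmas genuinely deliver what is needed in the signed setting: namely that Lemma~\ref{L-3} really does exhibit $\pi^\ast$ as the orbit-minimizer of $|\Lpv|$ (so that the containment $\Lpv(\pi^\ast)\subseteq\Lpv(\sigma)$ holds for \emph{all} $\sigma$ in the orbit, not merely the one produced in that proof), and that Lemma~\ref{L-4}'s uniqueness clause is exactly the injectivity half of the bijection $T_\sigma\leftrightarrow\Lpv(\sigma)$. The subtlety specific to type~B is the role of the node $0$: when $0$ is a leaf its parent $\pi_1$ is a fixed extremal node that cannot be flipped by any $\widehat\psi_i$ (case (b) of Definition~\ref{Def-1}), so one must check that this frozen node is already accounted for correctly in $\Lpv(\pi^\ast)$ and does not obstruct realizing an arbitrary $S\supseteq\Lpv(\pi^\ast)$; when $0$ is an inner node it contributes a forced min- or max-node near the start of the word. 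Both situations are handled inside the proofs of Lemmas~\ref{L-3} and~\ref{L-4}, so here it suffices to cite them, but I would state explicitly that the orbit-wide containment $\Lpv(\pi^\ast)\subseteq\Lpv(\sigma)$ follows from Proposition~\ref{p4} together with the construction in Lemma~\ref{L-3}, mirroring the sentence "By the proof of Lemma~\ref{L-2}\dots" in the type~A argument.
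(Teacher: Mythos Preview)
Your proposal is correct and follows essentially the same approach as the paper's own proof: invoke Lemma~\ref{L-3} for the minimizer $\pi^\ast$, use Lemma~\ref{L-4} to set up the bijection between the orbit and the subsets $S\supseteq\Lpv(\pi^\ast)$ of $[n-1]$, and conclude via the binomial theorem. Your extra paragraph on the role of the node $0$ and the orbit-wide containment $\Lpv(\pi^\ast)\subseteq\Lpv(\sigma)$ makes explicit a point that the paper only hints at with the phrase ``the quantity $|\Lpv(\sigma)|-\leaf(T_\pi)$ precisely counts the additional positions\ldots''.
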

\begin{proof}
By Lemma~\ref{L-3}, we have $\Lpv(\pi^\ast)\subseteq \Lpv(\sigma)$ for every 
$T_\sigma\in\Orb^{*}(T_\pi)$.  
Moreover, for any such $\sigma$,
\[
\run_B(\sigma)
   = |\Lpv(\sigma)|+1
   = \leaf(T_\pi)+1 \;+\; \bigl(|\Lpv(\sigma)|-\leaf(T_\pi)\bigr).
\]
The quantity $|\Lpv(\sigma)|-\leaf(T_\pi)$ precisely counts the additional
positions at which the MHR--action introduces a new left-peak or valley.  
Applying Lemma~\ref{L-4}, we obtain the desired identity~\eqref{k-2}.
\end{proof}

\subsection{Alternating runs in type B}

A permutation \(\pi\in \A_n^B\) is called an \emph{alternating permutation}
(down--up) if
\[
\pi_1>\pi_2<\pi_3>\pi_4<\cdots.
\]
An alternating permutation \(\pi\in\A_n^B\) is called a \emph{snake} if it starts
with a positive entry, that is, if \(\pi_1>0\).

Let \(\mathcal{DU}_n^{(B)}\) (resp.\ \(\mathcal{S}_n\)) denote the set of
alternating permutations (resp.\ snakes) of type~B of length \(n\).
Let \(DU_n^{(B)}=\lvert\mathcal{DU}_n^{(B)}\rvert\).
Clearly,
\[
DU_n^{(B)}=2^n E_n,
\]
where \(E_n\) is the Euler number.

Arnol'd~\cite{Ar92} proved that the cardinalities  of 
\(\mathcal{S}_n\) are  
the \textit{Springer numbers} $S_n$, which have the exponential generating function~\cite{Sp71},
\begin{align}\label{pan}
1+\sum_{n>0}S_n\frac{x^n}{n!}=\frac{1}{\cos(x)-\sin(x)}.
\end{align}


Define the sets
\begin{align}   
\mathcal{S}_{n,k}&=\{\pi\in\mathcal{S}_n\,|\,\leaf(T_\pi)=k\},\\
\mathcal{A}_{n,k}^B&=\{\pi\in\mathcal{A}_n^B\,|\,\des(\pi)=k\}.
\end{align}

\begin{lem}\label{s-a}
For $k\ge 1$, let $b_{n,k}$ and $\bar b_{n,k}$ denote the cardinalities of 
$\mathcal{S}_{n,k}$ and $\mathcal{A}_{n,k}^B$, respectively.  
Then
\[
b_{n,k}=\bar b_{n,k-1}.
\]
\end{lem}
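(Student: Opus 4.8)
The plan is to exhibit a bijection
\[
\Phi:\mathcal{S}_{n,k}\longrightarrow \mathcal{A}_{n,k-1}^{B}
\]
realized through the tree model and the modified HR--action. First I would pass from permutations to their HR--trees: a snake $\pi\in\mathcal{S}_n$ is a down--up alternating signed permutation with $\pi_1>0$, so by the discussion preceding the statement (and the analogue of Proposition~\ref{p3}/\ref{p4}) its tree $T_\pi\in\bhr_n$ has exactly $k$ leaves, hence $k-1$ inner nodes with two children, and by $\pi_1>0$ the node $0$ is a leaf whose parent $\pi_1$ is a $\max$--node with two children. Because $\pi$ is down--up alternating, every index in $\{2,\dots,n-1\}$ is a peak or a valley, so \emph{all} inner nodes of $T_\pi$ have two children; equivalently $T_\pi$ has no inner node with exactly one child. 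The valleys are precisely the $\min$--nodes and the peaks (together with $\pi_1$) the $\max$--nodes.

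The core step is then to apply the MHR--action (Definition~\ref{Def-1}) to flip the roles of $\min$-- and $\max$--nodes so as to land in a type~B André tree. Concretely, using Lemma~\ref{andre-B}, $\pi^{\ast}\in\A_n^B$ iff every inner node of $T_{\pi^{\ast}}$ is a $\min$--node; so I would take $\Phi(\pi)=w(T')$ where $T'\in\Orb^{*}(T_\pi)$ is the unique tree obtained by applying $\widehat{\psi}_i$ at each node $i$ that is currently a $\max$--node and not the parent of the leaf $0$. (The parent of $0$, namely $\pi_1$, is fixed by case (b) of Definition~\ref{Def-1}; since $T_\pi$ has no one--child inner nodes, after this flipping the only node that is not a $\min$--node is $\pi_1$, which is the leftmost nontrivial node — and one checks this is compatible with the type~B André condition, whose clause~(ii) only constrains valleys.) I would then verify that $\Phi(\pi)$ is indeed a type~B André permutation: it has no double descent and no final descent because the underlying tree shape is unchanged and all genuine inner nodes are now $\min$--nodes, and clause~(ii) follows exactly as in the second paragraph of the proof of Lemma~\ref{andre-B}. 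Since the MHR--action preserves the tree shape, it preserves the number of inner nodes with two children, which equals $k-1$; by Fact~\ref{fact1} (read in reverse) this is exactly $\mathrm{des}(\Phi(\pi))=\mathrm{val}(\Phi(\pi))=k-1$, so $\Phi(\pi)\in\mathcal{A}_{n,k-1}^B$.

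For surjectivity and injectivity I would run the construction backwards: given $\tau\in\mathcal{A}_{n,k-1}^B$, Lemma~\ref{andre-B} says all inner nodes of $T_\tau$ are $\min$--nodes, and $\mathrm{des}(\tau)=k-1$ forces (via Fact~\ref{fact1}) exactly $k-1$ inner nodes with two children. To invert $\Phi$ I want to reach the \emph{unique} down--up snake in $\Orb^{*}(T_\tau)$; this is precisely the content of the type~B analogue of Lemma~\ref{L-1}/Lemma~\ref{L-3} specialized to the case where $S=\Lpv$ is the full set $\{1,\dots,n-1\}$, i.e.\ the orbit member with the maximal number of alternating runs, which is alternating. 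That this member starts with a positive entry is guaranteed by the Proposition stating the MHR--action preserves the sign of $\pi_1$, together with the fact that a type~B André permutation with the maximal descent statistic has $\pi_1>0$ (one can see $0$ is a leaf of $T_\tau$ with parent a two--child node, which after the action becomes the unique $\max$--node $=\pi_1>0$). Uniqueness of this orbit member gives that $\Phi$ is a bijection, and since it preserves $k-1 = \mathrm{des}$ on one side and $k=\leaf$ on the other, we conclude $b_{n,k}=\bar b_{n,k-1}$.

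The step I expect to be the main obstacle is the careful bookkeeping that no one--child inner nodes intervene and that the boundary node $\pi_1$ (the parent of the leaf $0$, frozen by the MHR--action) behaves correctly on both sides: one must check that flipping all $\max$--nodes except $\pi_1$ really does produce a \emph{type~B} André permutation (not merely a tree with all inner nodes $\min$) and, conversely, that every $\tau\in\mathcal{A}_{n,k-1}^B$ has its leaf $0$ attached to a two--child node so that the inverse map is well defined and yields a genuine snake. This is where the asymmetry between type~A and type~B (the role of $0$, and the ``no final descent'' clause) has to be handled with care; everything else is a transcription of the arguments already given for Lemmas~\ref{L-1}, \ref{L-3}, \ref{andre-B}, and Fact~\ref{fact1}.
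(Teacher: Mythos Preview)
Your argument rests on two structural claims about the HR--tree of a snake that are both false, and this breaks the proposed bijection.

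First, the assertion that ``by $\pi_1>0$ the node $0$ is a leaf whose parent $\pi_1$ is a $\max$--node with two children'' does not hold in general. Take the snake $\pi=21\in\mathcal{S}_{2,1}$: all entries are positive, so $0$ is the minimum of the word $0\,2\,1$ and hence the \emph{root} of $T_\pi$, an inner node with exactly one child. The paper's proof explicitly splits into two cases, according to whether $0$ is a leaf or an inner (one--child) node, and you have only attempted the first. Second, the implication ``$\pi$ is down--up alternating $\Rightarrow$ every inner node of $T_\pi$ has two children'' is a non sequitur: a peak or valley in the word need not label a two--child node in the HR--tree (this is precisely the point of the remark after Proposition~\ref{p3}). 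In the same example the tree is the chain $0\to 2\to 1$ with \emph{no} two--child inner node, and more generally $\mathcal{S}_{n,k}$ is nonempty for many values of $k$, not just the maximal one your assumption would force.

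Even restricting to the case where $0$ is a leaf, the plan to use only the MHR--action cannot work: by Definition~\ref{Def-1}(b) the operator $\widehat\psi_i$ fixes the parent of the leaf $0$, so $\pi_1$ remains a $\max$--node. Your claim that this is ``compatible with the type~B Andr\'e condition'' is incorrect: Lemma~\ref{andre-B} requires \emph{all} inner nodes to be $\min$--nodes, and indeed for the snake $\pi=2\bar1$ the MHR--action does nothing and $2\bar1$ has a final descent, so it is not Andr\'e. The paper handles this by applying the ordinary (unmodified) HR--action in Case~1, which is allowed to flip the parent of $0$; in Case~2, where $0$ is an inner $\min$--node, the MHR--action suffices. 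You need both ingredients.
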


\begin{proof}
We construct a bijection from $\mathcal{S}_{n,k}$ to $\mathcal{A}_{n,k-1}^B$.  
Let $\pi\in\mathcal{S}_{n,k}$.  
Then $T_\pi$ has $k-1$ inner nodes with two children, since it has $k$ leaves.  
We distinguish two cases according to the role of the node labeled~$0$ in $T_\pi$.

\smallskip
\emph{Case~1.}  
If the node labeled~$0$ is a leaf, then its parent, labeled~$\pi_1$,  
has two children and is necessarily a max–node, because $\pi_1>0$.  
Applying the HR--action, which turns every max–node of $T_\pi$ into a min–node,  
we obtain an HR–tree $T_\tau$.  
By Lemma~\ref{andre-B} and Fact~\ref{fact1}, the corresponding permutation $\tau$  
is a type~B André permutation with $k-1$ descents.

\smallskip
\emph{Case~2.}  
If the node labeled~$0$ has one child, then it must be a min–node,  
since its right subtree contains the node labeled $\pi_1$ and $0<\pi_1$.  
Applying the MHR--action, which converts all max–nodes of $T_\pi$ into min–nodes,  
we again obtain an HR–tree $T_\tau$.  
By Lemma~\ref{andre-B} and Fact~\ref{fact1},  
the associated permutation $\tau$ is a type~B André permutation with $k-1$ descents.

\smallskip
In both cases, the process is reversible, establishing a bijection between  
$\mathcal{S}_{n,k}$ and $\mathcal{A}_{n,k-1}^B$.  
\end{proof}

\begin{theorem}\label{CB-1}
For $n\ge 1$, define
\begin{subequations}
    \begin{equation}
    T_n(x)=\sum_{k=1}^{\lceil (n+1)/2 \rceil} b_{n,k}\, x^{k}=
\sum_{k=1}^{\lceil (n+1)/2 \rceil}\bar b_{n,k-1}\, x^{k}.
\end{equation}
Then
\begin{align}\label{chow-ma}
R_n^{B,>}(x)=R_n^{B,<}(x)
   = (1+x)^{n}\,
     T_n\!\left(\frac{x}{1+x}\right).
\end{align}
\end{subequations}

\end{theorem}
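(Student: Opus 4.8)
The plan is to sum the orbit identity of Theorem~\ref{T-2} over all modified HR--orbits contained in $\bhr_n^{>}$, and then to count these orbits by their number of leaves, identifying that count with a count of snakes. Before that, I would dispose of the equality $R_n^{B,>}(x)=R_n^{B,<}(x)$ using the sign--reversal involution $\pi\mapsto\overline{\pi}$ of \eqref{def:bar pi}: it restricts to a bijection $\B_n^{>}\to\B_n^{<}$, and since $\overline{\pi}_0=\pi_0=0$ while negation reverses every strict inequality, the word $0\pi$ changes direction at an index $i$ exactly when $0\overline{\pi}$ does; hence $\run_B(\pi)=\run_B(\overline{\pi})$ and the two polynomials coincide. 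It therefore suffices to compute $R_n^{B,>}(x)$.

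For the main formula, the key input is the proposition preceding Proposition~\ref{p4}: the MHR--action preserves $\B_n^{>}$, so $\bhr_n^{>}:=\{T_\pi:\pi\in\B_n^{>}\}$ is a disjoint union of complete MHR--orbits. As the action preserves tree shape, $\leaf(T)$ is constant on each orbit $O$; denote it by $\leaf(O)$. Summing Theorem~\ref{T-2} over orbit representatives gives
\[
R_n^{B,>}(x)=\sum_{O}\ \sum_{T_\sigma\in O}x^{\run_B(\sigma)}
=\sum_{O} x^{\leaf(O)}(1+x)^{\,n-\leaf(O)},
\]
and grouping orbits by $\leaf(O)=k$ yields $R_n^{B,>}(x)=\sum_{k}N_k\,x^{k}(1+x)^{\,n-k}$, where $N_k$ counts the MHR--orbits in $\bhr_n^{>}$ with exactly $k$ leaves. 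A min--max tree on $n+1$ nodes with $k$ leaves has $k-1$ branching nodes (Fact~\ref{F-1}) and hence $n+2-2k$ one--child inner nodes, so $1\le k\le\lceil(n+1)/2\rceil$, matching the summation range in the definition of $T_n$.

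The remaining, and main, step is to show $N_k=b_{n,k}$. I would argue that each orbit $O$ carries a distinguished permutation: by Theorem~\ref{T-2} the coefficient of $x^{n}$ in $x^{\leaf(O)}(1+x)^{\,n-\leaf(O)}$ is $1$, so there is exactly one $\sigma_O\in O$ with $\run_B(\sigma_O)=n$. For such $\sigma_O$ the word $0\sigma_O$ changes direction at every interior index, hence is strictly alternating; since $\sigma_O\in\B_n^{>}$ we have $(0\sigma_O)_0=0<(0\sigma_O)_1$, forcing $0\sigma_O$ to be up--down, i.e.\ $\sigma_O$ is a down--up alternating signed permutation with positive first letter — a snake. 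Conversely any $\pi\in\mathcal{S}_n$ satisfies $0<\pi_1>\pi_2<\cdots$, so $\run_B(\pi)=n$, and $\pi$ lies in a unique MHR--orbit, whose leaf count equals $\leaf(T_\pi)$. Hence $O\mapsto\sigma_O$ is a bijection from the orbits of $\bhr_n^{>}$ with $k$ leaves onto $\mathcal{S}_{n,k}$, so $N_k=|\mathcal{S}_{n,k}|=b_{n,k}$. Substituting back gives $R_n^{B,>}(x)=\sum_k b_{n,k}x^k(1+x)^{n-k}=(1+x)^n T_n\!\bigl(\tfrac{x}{1+x}\bigr)$; the alternative expression for $T_n$ is then immediate from Lemma~\ref{s-a}.

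I expect the bijection $O\leftrightarrow\sigma_O$ to be the only delicate point: one must confirm that the unique maximal--run representative of an orbit of $\bhr_n^{>}$ is precisely a snake in the sense fixed earlier (so that its count is $b_{n,k}$, and not that of some larger family), and that the map is injective — the latter being automatic since the MHR--orbits partition $\bhr_n^{>}$. Everything else is a formal consequence of Theorem~\ref{T-2}, the orbit--closure proposition, and the elementary node count that pins down the range $1\le k\le\lceil(n+1)/2\rceil$.
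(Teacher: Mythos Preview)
Your proposal is correct and follows essentially the same route as the paper: the sign--reversal $\pi\mapsto\overline\pi$ for the first equality, then summing Theorem~\ref{T-2} over the MHR--orbits in $\bhr_n^{>}$ and identifying orbits with snakes via their number of leaves. The only cosmetic difference is that the paper pins down the unique snake in each orbit by invoking Lemma~\ref{L-4} with $S=[n-1]$, whereas you read it off from the coefficient of $x^n$ in the orbit polynomial of Theorem~\ref{T-2}; these are equivalent, and your version makes the bijection $O\leftrightarrow\sigma_O$ more explicit.
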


\begin{proof}
For $\pi\in\B_n^{>}$, the map $\pi\mapsto \overline{\pi}$ is a bijection from $\B_n^{>}$ to $\overline{\B_n^{<}}$ and satisfies  
$\run_B(\pi)=\run_B(\overline{\pi})$.  
This gives the first equality.  
By Lemma~\ref{L-4} with $S=[n-1]$, there is a unique snake $\sigma$ in $\Orb^{*}(T_\pi)$, and by Theorem~\ref{T-2} we obtain~\eqref{chow-ma}.
\end{proof}

\begin{remark}
Theorem~\ref{CB-1} provides two combinatorial interpretations of the polynomials $T_n(x)$, thereby answering a question of Chow and Ma~\cite[Section~5]{CM14}.
\end{remark}

The first few instances of  \(T_n(x)\) are 
\[
\begin{aligned}
T_1(x) &= x,\\
T_2(x) &= x + 2x^2,\\
T_3(x) &= x + 10x^2,\\
T_4(x) &= x + 36x^2 + 20x^3.
\end{aligned}
\]

For $T_\pi\in\bhr_n$, if the nodes with one child of $T_\pi$ are labeled by 
\[\pi_{i_1},\,\pi_{i_2},\,\cdots,\,\pi_{i_m}\quad  (0\leq i_1<i_2<\cdots<i_m), 
\]
then we say a node with one child of $T$ is \emph{the $k$-th node with one child}, if it is labeled by $\pi_{i_k}$, and we also call it an even (resp. odd) node with one child if $k$ is even (resp. odd).

\begin{definition}
Let \(\widetilde{\bhr}_n^{+}\) (resp.\ \(\widetilde{\bhr}_n^{-}\)) denote the subset of
trees in \(\bhr_n\) satisfying the following conditions:
\begin{enumerate}
\item every node with two children is a \(\max\)-node
      (resp.\ a \(\min\)-node);
\item every \(\max\)-node (resp.\ \(\min\)-node) with one child is an even node.
\end{enumerate}
\end{definition}

\begin{theorem}\label{Quotient-B}
 Let $m=\lfloor(n-1)/2\rfloor$ for $n\geq 1$, then
\begin{align}
R_n^{B,>}(x)&=(1+x)^{m}\sum_{T\in \widetilde{\bhr}_n^+}x^{\max(T)+1},\\
R_n^{B,<}(x)&=(1+x)^{m}\sum_{T\in \widetilde{\bhr}_n^-}x^{\min(T)+1},\\
R_n^{B}(x)&=(1+x)^{m}\left(\sum_{T\in \widetilde{\bhr}_n^+}x^{\max(T)+1}+\sum_{T\in \widetilde{\bhr}_n^-}x^{\min(T)+1}\right),
\end{align}
where $\max(T)$ (resp. $\min(T)$) is the number of $\max$-nodes ($\min$-nodes) of T.
\end{theorem}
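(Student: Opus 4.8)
The plan is to deduce Theorem~\ref{Quotient-B} from the two structural results already available in type~B: the orbit-sum formula of Theorem~\ref{T-2} and the André-style normal forms characterized by Lemma~\ref{andre-B} and Lemma~\ref{s-a}. First I would observe that the third identity is the sum of the first two, since $\B_n = \B_n^{>}\sqcup\B_n^{<}$ and $R_n^B(x) = R_n^{B,>}(x)+R_n^{B,<}(x)$; so it suffices to prove the first identity (the second follows by applying the bar involution $\pi\mapsto\overline\pi$, which swaps $\B_n^{>}$ with $\overline{\B_n^{<}}$, exchanges $\max$-nodes with $\min$-nodes, preserves shapes, and preserves $\run_B$, hence carries $\widetilde{\bhr}_n^{+}$ bijectively onto $\widetilde{\bhr}_n^{-}$). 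So the core task is the formula for $R_n^{B,>}(x)$.

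For the first identity I would partition $\B_n^{>}$ into MHR-orbits. By Theorem~\ref{T-2}, each orbit $\Orb^{*}(T_\pi)$ contributes $x^{\leaf(T_\pi)}(1+x)^{n-\leaf(T_\pi)}$ to $R_n^{B,>}(x)$; writing $\leaf(T_\pi)=k$ and using $m=\lfloor(n-1)/2\rfloor$, this equals $(1+x)^{m}\cdot x^{k}(1+x)^{n-k-m}$, so it remains to identify, for each orbit, a canonical representative that records $k$ and, more precisely, to re-express $x^{k}(1+x)^{n-k-m}$ as a sum over the trees of $\widetilde{\bhr}_n^{+}$ lying in that orbit, weighted by $x^{\max(T)+1}$. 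The point is that within a fixed orbit there is a distinguished ``all-$\max$-at-two-children'' tree, and the extra factor $(1+x)^{n-k-m}$ is accounted for by the freedom of turning certain one-child nodes into $\max$-nodes or not; exactly the trees where all two-children nodes are $\max$-nodes and all $\max$-nodes with one child are even-indexed form the set $\widetilde{\bhr}_n^{+}$. Concretely: in the orbit, apply the HR/MHR-action to make every two-children node a $\max$-node (this is the unique such tree, mirroring the André normal form of Lemma~\ref{s-a}); among its one-child nodes, the odd-indexed ones are forced to become $\max$-nodes in any member of $\widetilde{\bhr}_n^{+}$, while each even-indexed one-child node may independently be flipped to a $\max$-node or left as a $\min$-node, contributing the factor $x$ or $1$ respectively, and the total count of such even one-child nodes is exactly $n-k-m$, so summing over the $2^{n-k-m}$ choices reproduces $(1+x)^{n-k-m}$ with $x$-exponent equal to the number of $\max$-nodes chosen. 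Assembling, $\sum_{T\in\widetilde{\bhr}_n^{+}\cap\Orb^{*}(T_\pi)} x^{\max(T)+1} = x^{1}\cdot x^{(k-1)}(1+x)^{n-k-m} = x^{k}(1+x)^{n-k-m}$ (the ``$+1$'' absorbing the root's contribution and the $k-1$ two-children $\max$-nodes giving $x^{k-1}$), which is precisely $(1+x)^{-m}$ times the orbit's contribution to $R_n^{B,>}(x)$; summing over all orbits yields the claim.

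The main obstacle I anticipate is the bookkeeping in the displayed count ``$n-k-m$ even one-child nodes'' and the verification that the HR-action genuinely allows \emph{independent} flipping of the even one-child nodes while keeping \emph{all} two-children nodes $\max$ and \emph{all} odd one-child nodes $\max$ — i.e., that the parity index of a one-child node is an invariant of which flips have been performed, so that the conditions defining $\widetilde{\bhr}_n^{+}$ are consistent and cut out exactly $2^{n-k-m}$ trees per orbit. This is the type~B analogue of the argument proving Eq.~\eqref{eq:key} in the type~A quotient theorem, and I would model the proof on that one: set up the parameter $\ell'=\lceil(n+1)/2\rceil$ or the appropriate type~B count, show that an André-with-$(k-1)$-descents representative has exactly $n-k-m$ even one-child nodes after the canonicalization, invoke the $\binom{\cdot}{\cdot}$-style choice of which to flip, and check that flipping odd one-child nodes is forced. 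The one place requiring genuine care specific to type~B is the role of the node labeled $0$: depending on whether $0$ is a leaf (with parent $\pi_1$, a two-children $\max$-node) or an inner one-child $\min$-node, the indexing of one-child nodes and the status of $\pi_1$ shift, so the two cases of Lemma~\ref{s-a} must be tracked separately to confirm the count is $n-k-m$ in both. Once that combinatorial identity is in hand, the rest is the elementary orbit-summation and binomial expansion sketched above.
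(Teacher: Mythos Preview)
Your approach is essentially the paper's: both pull out $(1+x)^m$ from the orbit-sum formula of Theorem~\ref{T-2} (equivalently Theorem~\ref{CB-1}) and then match the remaining factor $x^k(1+x)^{\lceil(n+1)/2\rceil-k}$ with the trees in $\widetilde{\bhr}_n^{+}$ by counting even one-child nodes, exactly as in the type~A proof of~\eqref{eq:key}. The paper's proof is in fact terser than yours --- it records the count $\lceil(n-2k+1)/2\rceil$ of even one-child nodes and leaves the bijection implicit by analogy with type~A --- so your orbit-by-orbit write-up is a faithful expansion of the same argument, including the reduction of the ``$<$'' case to the ``$>$'' case via $\pi\mapsto\overline\pi$.

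One slip to fix: by the definition of $\widetilde{\bhr}_n^{+}$, a $\max$-node with one child must be \emph{even}, so it is the \emph{odd}-indexed one-child nodes that are forced to be $\min$-nodes, not $\max$-nodes as you wrote. Your displayed identity $\sum_T x^{\max(T)+1}=x^{k}(1+x)^{n-k-m}$ is correct precisely because those odd one-child nodes contribute nothing to $\max(T)$; had they been $\max$-nodes, an extra power of $x$ would appear. (Relatedly, the ``$+1$'' in the exponent is just the shift in the statement, not a contribution from the root.) This is consistent with the two $0$-node cases you flag: when $0$ is a leaf, its parent $\pi_1$ is automatically a $\max$-node (since $0<\pi_1$ lies in its subtree) and is fixed by the MHR-action; when $0$ is an inner one-child node it is the \emph{first} (hence odd) one-child node and is necessarily a $\min$-node, again matching the constraint.
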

\begin{proof}
Since $n-\lceil\frac{n+1}{2}\rceil=\lfloor\frac{n-1}{2}\rfloor$, by Theorem \ref{CB-1}, the polynomials  $R_n^{B,>}(x)$, 
$R_n^{B,<}(x)$ and  $R_n^{B}(x)$  are all divisible by $(1+x)^{\lfloor\frac{n-1}{2}\rfloor}$.
Moreover,  we have the following factorizations:
\begin{subequations}\label{eq:quotients B}
\begin{align}\label{Q-1}
R_n^{B,>}(x)&=(1+x)^{m}\sum_{k=1}^{\lceil\frac{n+1}{2}\rceil}b_{n,k}x^k(1+x)^{\lceil\frac{n+1}{2}\rceil-k},\\
 R_n^{B,<}(x)&=(1+x)^{m}\sum_{k=1}^{\lceil\frac{n+1}{2}\rceil}b_{n,k}x^k(1+x)^{\lceil\frac{n+1}{2}\rceil-k},\\
R_n^{B}(x)&=(1+x)^{m}\sum_{k=1}^{\lceil\frac{n+1}{2}\rceil}2\cdot b_{n,k}x^k(1+x)^{\lceil\frac{n+1}{2}\rceil-k},\label{Q-4}
\end{align}
\end{subequations}
where $m=\lfloor(n-1)/2\rfloor$.

For $T\in\bhr_n$ with $k$ leaves. If $0$ is a leaf of $T$, then $n-k-(k-1)$ is the number of nodes with one child of $T$ minus $1$. If $0$ is an inner node of $T$, then $0$ is the first node with one child, thus $n-k-(k-1)$ is also the number of nodes with one child of $T$ minus $1$. So we have $\lceil(n-k-(k-1))/2\rceil$ is the number of even nodes with one child of $T$. 
\end{proof}

\subsection{Link to type B Eulerian polynomials}
For $\pi=\pi_1\pi_2\ldots\pi_n\in\B_n$ with $\pi_0=0$, an index $i\in \{0, \ldots, n-1\}$ is a descent position of $\pi$ if $\pi_i>\pi_{i+1}$. The number of descents of $\pi$ is denoted by $\mathrm{des}_B\,\pi$. 
Then, the type B Eulerian polynomials are  defined by, see \cite{Br94, Pe15},
$$
B_n(x)=\sum_{\pi\in\B_n}x^{\mathrm{des}_B\,\pi}.
$$
Next, we define a group action on $\bhr_n$, which is based on the HR--action.
\begin{definition}[BHR--action]
Let $T_\pi\in\bhr_n$. For $0\leq i\leq n$, the operator $\widetilde{\psi}_i$ acting on the tree $T_\pi$ is defined as follows.
\begin{enumerate}
\item[(A)]
For $i\neq0$.
\begin{itemize}
\item 
If $\pi_i$ is a $\min$-node, then replace $\pi_i$ by the largest element of $T_{\pi}^r(\pi_i)$, permute the remaining elements of $T_{\pi}^r(\pi_i)$ such that they keep their same relative orders and all other nodes in $T_\pi$ are fixed.
\item
If $\pi_i$ is a $\max$-node, then replace $\pi_i$ by the smallest element of $T_{\pi}^r(\pi_i)$ such that they keep their same relative order, and all other nodes in $T_\pi$ are fixed.
\end{itemize}
\vskip 1.5 mm
\item[(B)]
For $i=0$ and $\pi_0$ is an inner node.
\begin{itemize}
\item
$\widetilde{\psi}_0(T_\pi)=\prod_{i=1}^n\widetilde{\psi}_i(T_{\overline{\pi}})$.
\end{itemize}
\end{enumerate}
\end{definition}
\begin{figure}[t]
\begin{tikzpicture}
\draw (0,0)--(1.5,1.5);
\draw (1,-1)--(0,0);
\draw (2,-1)--(3,0);
\draw (1.5,1.5)--(3,0);
\draw (4,-1)--(3,0);
\fill (0,0) circle (2pt);
\fill (1.5,1.5) circle (2pt);
\fill (1,-1) circle (2pt);
\fill (3,0) circle (2pt);
\fill (2,-1) circle (2pt);
\fill (4,-1) circle (2pt);
\node[left] at (0,0) {$0$};
\node[above] at (1.5,1.5) {$\bar{4}$};
\node[below] at (1,-1) {$3$};
\node[above] at (3,0) {$5$};
\node[below] at (2,-1) {$2$};
\node[below] at (4,-1) {$\bar{1}$};
\draw[][->,very thick](4,0)--(6,0) node[midway, above]{$\widetilde{\psi}_0$};
\draw (7,0)--(8.5,1.5);
\draw (8,-1)--(7,0);
\draw (9,-1)--(10,0);
\draw (8.5,1.5)--(10,0);
\draw (11,-1)--(10,0);
\fill (7,0) circle (2pt);
\fill (8.5,1.5) circle (2pt);
\fill (8,-1) circle (2pt);
\fill (10,0) circle (2pt);
\fill (9,-1) circle (2pt);
\fill (11,-1) circle (2pt);
\node[left] at (7,0) {$0$};
\node[above] at (8.5,1.5) {$\bar{5}$};
\node[below] at (8,-1) {$\bar{3}$};
\node[above] at (10,0) {$4$};
\node[below] at (9,-1) {$1$};
\node[below] at (11,-1) {$\bar{2}$};
\node[below] at (1.5,-2) {$T_{03\bar{4}25\bar{1}}$};
\node[below] at (8.5,-2) {$T_{0\bar{3}\bar{5}14\bar{2}}$};
\end{tikzpicture}
    \caption{A BHR--action:  $\widetilde{\psi}_0(T_{03\bar{4}25\bar{1}})=\prod_{i=1}^5\widetilde{\psi}_i(T_{0\bar{3}{4}\bar{2}\bar{5}{1}})=T_{0\bar{3}\bar{5}14\bar{2}}$.}\label{F-4}
\end{figure}
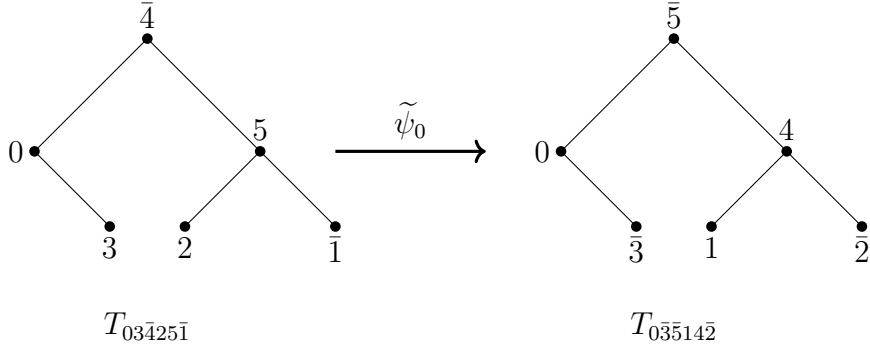

Note that for $T_\pi\in\bhr_n$, if $\pi_0$ is an inner node, then $\widetilde{\psi}_0(T_\pi)=T_\sigma$ and $T_\pi$ have the same shape. For $1\leq i\leq n$, the node of $T_\sigma$ labeled by $\sigma_i$ and the node of $T_\pi$ labeled by $\pi_i$ are either both $\max$-nodes or both $\min$-nodes. And if $0$ is a $\min$-node (resp. $\max$-node) in $T_\sigma$ then $0$ is a $\max$-node (resp. $\min$-node) in $T_\pi$.

For example, Figure~\ref{F-4} gives an illustration of BHR--action in case (B) with 
\[
\pi=03\bar{4}25\bar{1},  \quad 
\overline{\pi}=0\bar{3}{4}\bar{2}\bar{5}{1}.
\]

\par
It is clear that $\widetilde{\psi}_i$ is an involution acting on $\bhr_n$ and that $\widetilde{\psi}_i$ and $\widetilde{\psi}_j$ commute for all $i,\,j\in[0,\,n]$. Hence, for any subset $S\subseteq[0,\,n]$ we may define the map
\[\widetilde{\psi}_S:\,\bhr_n\longrightarrow \bhr_n,\qquad
\widetilde{\psi}_S(T_\pi)=\prod_{i\in S}\widetilde{\psi}_i(T_\pi).
\]
The group $\mathbb{Z}_2^n$ acts on $\bhr_n$ via the maps $\widetilde{\psi}_S$ for  $S\subseteq[0,\,n]$. For $\pi\in\B_n$, let $\mathrm{Orb}^B(T_\pi)=\{g(T_\pi):g\in\mathbb{Z}_2^n\}$ be the orbit of $T_\pi$ under the \emph{BHR--action}. We have the following proposition regarding BHR--action.
\begin{proposition}
For $T_\pi\in\bhr_n$, there is one and only one type B Andr\'e permutations  in $\mathrm{Orb}^B(T_\pi)$.
\end{proposition}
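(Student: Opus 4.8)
The plan is to reduce the statement to the type~A result (Theorem~\ref{C-1} together with Lemma~\ref{lemma-HR}) by analyzing how the BHR--action acts on the orbit $\mathrm{Orb}^B(T_\pi)$, and in particular how the extra generator $\widetilde\psi_0$ interacts with the generators $\widetilde\psi_1,\dots,\widetilde\psi_n$. First I would observe that for a tree $T_\pi\in\bhr_n$ exactly one of two situations occurs: either the node labeled $0$ is a leaf of $T_\pi$, or it is an inner node. In the leaf case $\widetilde\psi_0$ is not defined as a nontrivial move (only the $i\neq 0$ generators act), so the BHR--orbit is an ordinary HR--orbit of the underlying labeled tree, and the argument of Theorem~\ref{C-1} applied to the shifted label set $S$ produces a unique tree all of whose inner nodes are $\min$-nodes; by Lemma~\ref{andre-B} this tree corresponds to a type~B André permutation, and it is unique. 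In the inner-node case, $\widetilde\psi_0$ toggles $0$ between being a $\min$-node and a $\max$-node (while simultaneously flipping all signs and re-running the $\widetilde\psi_i$'s, as made explicit in the remark following the definition and in Figure~\ref{F-4}); the key point is that within each of the two "halves" of the orbit (fixing whether $0$ is currently a min- or a max-node) the action of $\widetilde\psi_1,\dots,\widetilde\psi_n$ is exactly the HR--action on a min--max tree with label set $S$ or $\overline S$.

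The second step is the existence of a type~B André representative. Starting from any $T_\pi\in\bhr_n$, if $0$ is a leaf I apply the $\widetilde\psi_i$'s to turn every inner node into a $\min$-node, exactly as in the proof of Theorem~\ref{C-1}; this is possible because the $\widetilde\psi_i$ restricted to the right subtree of each node behave identically to the type~A $\psi_i$, and Lemma~\ref{andre-B} then certifies that the resulting word is a type~B André permutation. If $0$ is an inner node, I first note that $0$ being the smallest available label (or $\bar n$ being replaceable by $n$, depending on the sign profile) forces, after possibly applying $\widetilde\psi_0$ once, the node $0$ to be a $\min$-node; then I again flip all remaining inner nodes to $\min$-nodes using the $\widetilde\psi_i$'s. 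In both cases Lemma~\ref{andre-B} gives that $w$ of the resulting tree lies in $\A_n^B$.

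For uniqueness I would argue that a type~B André tree is a fixed point of the subgroup generated by those $\widetilde\psi_i$ whose node is an inner node — because flipping a $\min$-node to a $\max$-node (or toggling $0$) strictly destroys the $\min$-node-only property of Lemma~\ref{andre-B} at that node and cannot be undone by the other commuting generators, since each $\widetilde\psi_i$ affects only the node $v_i$ together with its right subtree and these "zones of influence" are nested, not overlapping in a way that could restore an inner node to $\min$-status once changed. Consequently two distinct type~B André trees cannot lie in the same orbit: applying the generators that distinguish them would have to change at least one inner node away from being a $\min$-node. More cleanly, I would count: by Lemma~\ref{L-3} (with $S=[n-1]$, i.e.\ Lemma~\ref{L-4}) the orbit $\mathrm{Orb}^*(T_\pi)$ contributes, via Theorem~\ref{T-2}, a factor $x^{\leaf(T_\pi)}(1+x)^{n-\leaf(T_\pi)}$, and the type~B André representative is precisely the unique tree in the orbit all of whose inner nodes are $\min$-nodes — matching the type~A pattern where the André tree is the unique $\min$-only tree in its HR--orbit.

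The main obstacle I anticipate is the careful bookkeeping for the inner-node case of $0$: because $\widetilde\psi_0$ is defined indirectly as $\prod_{i=1}^n\widetilde\psi_i$ applied to $T_{\overline\pi}$, one must verify that it is a genuine involution commuting with the other generators (this is asserted in the text but should be used with care) and that conjugating by the sign-reversal $\pi\mapsto\overline\pi$ sends $\min$-nodes to $\max$-nodes in a manner compatible with the rest of the orbit structure. Once that compatibility is pinned down, the existence-and-uniqueness argument is a direct transcription of the type~A proof of Theorem~\ref{C-1} via Lemmas~\ref{lemma-HR} and~\ref{andre-B}.
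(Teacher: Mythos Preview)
Your core approach matches the paper's: the proof there is simply the observation that under the BHR--action one can independently toggle the $\min$/$\max$ status of every inner node (including the node $0$, via $\widetilde\psi_0$ when $0$ is inner, as recorded in the remark just before the proposition), so there is exactly one tree in $\mathrm{Orb}^B(T_\pi)$ all of whose inner nodes are $\min$-nodes, and by Lemma~\ref{andre-B} this is the unique type~B Andr\'e permutation in the orbit. Your case split on whether $0$ is a leaf or an inner node, and your existence/uniqueness discussion via independent toggles, amounts to the same argument, just considerably more elaborate than necessary.

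There is one genuine slip to flag. In your ``cleaner counting'' paragraph you invoke Lemma~\ref{L-3}, Lemma~\ref{L-4}, and Theorem~\ref{T-2}, but those results concern the \emph{modified} HR--action $\widehat\psi_i$ of Definition~\ref{Def-1} and the orbits $\mathrm{Orb}^*(T_\pi)$, not the BHR--action $\widetilde\psi_i$ defining $\mathrm{Orb}^B(T_\pi)$. These are different group actions: the MHR--action has no generator at index~$0$, and $\widehat\psi_i$ fixes the tree when $0$ is a leaf and $\pi_i$ is its parent, whereas $\widetilde\psi_i$ does not. So those lemmas do not transfer, and the factorization $x^{\leaf(T_\pi)}(1+x)^{n-\leaf(T_\pi)}$ you quote is an MHR--orbit identity, not a BHR--orbit identity. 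You do not need this detour anyway: the independent-toggle argument in your first uniqueness paragraph already gives the conclusion directly, exactly as in the paper.
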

\begin{proof}
By BHR--action, there is one and only one $T\in\mathrm{Orb}^B(T_\pi)$ such that all its inner nodes are $\min$-nodes. Hence, by Lemma \ref{andre-B}, the proposition holds.
\end{proof}

 The following $\gamma$-positive formula is due to Petersen~\cite[Theorem~13.5]{Pe15},
 \begin{align}\label{petersen-gamma}
 B_n(x)=\sum_{j=0}^{\lfloor n/2\rfloor}\gamma_{n,j}^{B}x^j(1+x)^{n-2j},
\end{align}
 where $\gamma_{n,j}^{B}=4^j\cdot|\{\sigma\in\S_n: |\Lpk(\sigma)|=j\}|$.
 
For example, the first few identities are 
 \begin{align*}
B_{1}(t)&=1+t;\\
B_{2}(t)&=(1+t)^{2}+4t;\\
 B_{3}(t)&=(1+t)^{3}+20 t(1+t);\\
B_{4}(t)&=(1+t)^{4}+72 t(1+t)^{2}+80t^{2}.
\end{align*}

For our purpose, we need the following interpretation of the $\gamma$-coefficients.
\begin{theorem}\label{gamma-B}
For all $n\geq1$, we have
\begin{align}
B_n(x)=\sum_{j=0}^{\lfloor n/2\rfloor}2^j\cdot\hat{b}_{n,j}x^j(1+x)^{n-2j},
\end{align}
where $\hat{b}_{n,j}=\#\{\pi\in\mathcal{A}_n^B|\mathrm{val}(\pi)=j\}$.
\end{theorem}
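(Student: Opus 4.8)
The plan is to adapt the orbit–summation argument behind the proof of Theorem~\ref{thm:FS} to type~B, using the BHR--action on $\bhr_n$ in place of the HR--action on $\hrn$. Since $B_n(x)=\sum_{T\in\bhr_n}x^{\mathrm{des}_B\,w(T)}$ and, by the Proposition on BHR--orbits established just before the statement, the BHR--action partitions $\bhr_n$ into orbits each containing exactly one tree $T_\pi$ with $\pi$ a type~B André permutation, it suffices to prove the local identity
\begin{equation*}
\sum_{T\in\mathrm{Orb}^B(T_\pi)}x^{\mathrm{des}_B\,w(T)}=(2x)^{j}(1+x)^{n-2j}\qquad\text{for }\pi\in\A_n^B,\ \mathrm{val}(\pi)=j,
\end{equation*}
and then to sum over $\pi\in\A_n^B$ grouped by the value $j=\mathrm{val}(\pi)$.

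First I would fix $\pi\in\A_n^B$ with $j$ valleys and read off the relevant combinatorics of $T_\pi$: by Lemma~\ref{andre-B} every inner node of $T_\pi$ is a $\min$--node, by Fact~\ref{fact1} exactly $j$ of them have two children, so $T_\pi$ has $j+1$ leaves and, having $n+1$ nodes in all, exactly $n-2j$ inner nodes with one child (whether the node labelled $0$ is a leaf or a one--child inner node is an invariant of the orbit, since all BHR--generators preserve the tree shape). As in the type~A computation, one checks that in a type~B André permutation the descents of $0\pi_1\cdots\pi_n$ are exactly the positions $i-1$ with $i$ a valley, so $\mathrm{des}_B\,\pi=j$; this permutation serves as the base point of the orbit.

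The heart of the proof is to attach to each node of $T_\pi$ a weight recording how the corresponding BHR--generator moves $\mathrm{des}_B$: a leaf gets weight $1$ (the generator is trivial there); a two--child inner node gets weight $2x$, because across its $\mathbb{Z}_2$--factor exactly one of the positions $i-1,i$ is a descent in each of $w(T)$ and $w(\widetilde\psi_i(T))$, with $\mathrm{des}_B$ preserved; and a one--child inner node gets weight $1+x$, because the corresponding generator creates exactly one new descent and destroys none. For a node among positions $1,\dots,n$ this is the verbatim type~A analysis of $\psi_i$ used for Theorem~\ref{thm:FS}; for the node $0$, when it is a leaf $\widetilde\psi_0$ is trivial, and when it is a one--child inner node one must show that the composite $\widetilde\psi_0=\bigl(\prod_{i=1}^{n}\widetilde\psi_i\bigr)\circ(\,\overline{\,\cdot\,}\,)$ toggles the status of the single node $0$ and shifts $\mathrm{des}_B$ by exactly one, so that it too contributes weight $1+x$. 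Multiplying the $n+1$ weights---and using that $\mathrm{Orb}^B(T_\pi)$ is precisely the set of trees obtained by independently toggling the $n-j$ inner nodes---yields $(2x)^{j}(1+x)^{n-2j}$, and summing over $\A_n^B$ gives the asserted expansion; comparison with Petersen's formula~\eqref{petersen-gamma} then forces the side identity $\hat b_{n,j}=2^{j}\,|\{\sigma\in\S_n:|\Lpk(\sigma)|=j\}|$. I expect the one genuinely delicate point to be the analysis of the non--local generator $\widetilde\psi_0$ in the one--child case: one must trace the sign reversal $\pi\mapsto\overline\pi$ through the full product $\prod_{i\ge1}\widetilde\psi_i$ to confirm that its net effect is to flip node $0$ alone and change $\mathrm{des}_B$ by one, handling correctly the boundary descent position $0$ (where $\pi_0=0$); everything else is a routine transcription of the arguments already carried out for Theorems~\ref{thm:FS} and~\ref{T-2}.
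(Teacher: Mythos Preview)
Your proposal is correct and follows essentially the same approach as the paper: partition $\bhr_n$ into BHR--orbits, use the unique type~B Andr\'e representative in each orbit, assign the local weights $1$, $2x$, $1+x$ to leaves, two--child inner nodes, and one--child inner nodes respectively, and multiply to obtain the orbit contribution $(2x)^{j}(1+x)^{n-2j}$. You are in fact more explicit than the paper about the one potentially subtle point, namely the behaviour of the non--local generator $\widetilde\psi_0$ when $0$ is a one--child inner node; the paper's proof simply folds this into the product $c_0(\pi)c_1(\pi)\cdots c_n(\pi)$ without separate discussion.
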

\begin{proof}
Let $\pi\in\A_n^B$ and $T_\pi$ is the corresponding BHR--tree. We have the following observations:
\begin{itemize}
    \item
    If $i$ is a valley of $\pi$, i.e., $\pi_{i-1}>\pi_i<\pi_{i+1}$ for
    $i\in [n-1]$, then the node labeled by $\pi_i$ of $T_\pi$ is an inner node with two children.
    \item 
    If $\pi_i$ is a label of an inner node with two children of $T_\pi$, then $i-1$ is a descent position of $\pi$, $i$ is a descent position of $w(\widetilde{\psi}_i(\pi))$, and $\mathrm{des}_B\,w(\widetilde{\psi}_i(\pi))=\mathrm{des}_B\,\pi$.
    \item 
    If $\pi_i$ is a label of an inner node with one child of $T_\pi$, then $i$ is an ascent position of $\pi$, $i$ is a descent position of $w(\widetilde{\psi}_i(\pi))$,and $\mathrm{des}_B\,w(\widetilde{\psi}_i(\pi))=\mathrm{des}_B\,\pi+1$.
\end{itemize}
Let
\begin{align}
c_i(\pi)=
\begin{cases}
 2x,& \text{if $\pi_i$ is a label of an inner node with two children in $T_\pi$};\nonumber\\
 1+x,&\text{if $\pi_i$ is a label of an inner node with one child in $T_\pi$};\\
 1,&\text{if $\pi_i$ is a label of a leaf in $T_\pi$.}\\
 \end{cases}
 \end{align} 
 If $T_\pi$ has  
  $j$  inner nodes with 2 children, then  $\leaf(T_\pi)=j+1$, and $T_\pi$ has $n-2j$ inner nodes with one child.  Applying the above observations we have
\begin{align*}
\sum_{T\in\mathrm{Orb}^B(T_\pi)}x^{\mathrm{des}_B\,w(T)}&=c_0(\pi)c_1(\pi)\cdots c_n(\pi)\\
&=(2x)^j(1+x)^{n-2j}.
\end{align*}
Summing over all $\pi\in\A_n^B$ we  obtain the result.
\end{proof}

\begin{remark}
    Comparing with Petersen's formula~\eqref{petersen-gamma} we have $2^j\bar b_{n,j}=\gamma_{n,j}^B$, which yields the recurrence relation~(see \cite{Ch08} and A008971 in \emph{OEIS}),
\begin{equation}
    \bar{b}_{n,k}=(1+2k)\bar{b}_{n-1,k}+(2n-4k+2)\bar{b}_{n-1,k-1}.
\end{equation}
\end{remark}

We have 
the following type B  David-Barton formula.
\begin{theorem}  
Let $w=\sqrt{\frac{1-x}{1+x}}$.
For $n\geq 1$ we have
\begin{align}\label{Type B David-Barton} 
R^{B,>}_n(x)=\frac{x}{2}\left(\frac{1+x}{2}\right)^{n-1} (1+w)^n B_n\left(\frac{1-w}{1+w}\right).
\end{align}
\end{theorem}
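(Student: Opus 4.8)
The plan is to mirror the proof of the type~A David--Barton formula~\eqref{Run-Euler}: combine the closed form for $R_n^{B,>}(x)$ coming from Theorem~\ref{CB-1} and Lemma~\ref{s-a} with the $\gamma$-positive expansion of the type~B Eulerian polynomial $B_n(x)$ coming from Theorem~\ref{gamma-B}, and then pass from $x$ to $y=\frac{1-w}{1+w}$, where $w=\sqrt{\frac{1-x}{1+x}}$, so that the two expansions become term-by-term comparable.

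First I would put the two polynomials in a common shape. Unwinding Theorem~\ref{CB-1} together with $b_{n,k}=\bar b_{n,k-1}$ (Lemma~\ref{s-a}) gives
\[
R_n^{B,>}(x)=\sum_{j\ge 0}\bar b_{n,j}\,x^{j+1}(1+x)^{n-1-j}
            =x(1+x)^{n-1}\sum_{j\ge 0}\bar b_{n,j}\Bigl(\tfrac{x}{1+x}\Bigr)^{j},
\]
where $\bar b_{n,j}$ is the number of type~B André permutations with $j$ descents, while Theorem~\ref{gamma-B} gives
\[
B_n(x)=\sum_{j\ge 0}2^{j}\,\bar b_{n,j}\,x^{j}(1+x)^{n-2j}.
\]
To write $B_n$ in this last form I use that $\hat b_{n,j}=\bar b_{n,j}$, i.e.\ that descents and valleys coincide---term by term---on type~B André permutations; this is the relation $2^{j}\bar b_{n,j}=\gamma_{n,j}^{B}$ noted after Theorem~\ref{gamma-B}, and it follows at once from the defining conditions (no double descent, no final descent), which match each descent $i$ of $0\pi$ with the valley $i+1$ of $\pi$, and conversely.

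Next I would perform the change of variable. Setting $y=\frac{1-w}{1+w}$ one has $1+y=\frac{2}{1+w}$, hence $(1+y)^{n}=\frac{2^{n}}{(1+w)^{n}}$, and, using $w^{2}=\frac{1-x}{1+x}$,
\[
\frac{2y}{(1+y)^{2}}=\frac{(1-w)(1+w)}{2}=\frac{1-w^{2}}{2}=\frac{x}{1+x}.
\]
Substituting $x\mapsto y$ in the expansion of $B_n$ and factoring out $(1+y)^{n}$ then yields
\[
B_n\Bigl(\tfrac{1-w}{1+w}\Bigr)
 =(1+y)^{n}\sum_{j\ge 0}\bar b_{n,j}\Bigl(\tfrac{2y}{(1+y)^{2}}\Bigr)^{j}
 =\frac{2^{n}}{(1+w)^{n}}\sum_{j\ge 0}\bar b_{n,j}\Bigl(\tfrac{x}{1+x}\Bigr)^{j}.
\]
Since $R_n^{B,>}(x)=x(1+x)^{n-1}\sum_{j\ge 0}\bar b_{n,j}(x/(1+x))^{j}$, the sum on the right equals $R_n^{B,>}(x)/\bigl(x(1+x)^{n-1}\bigr)$; substituting this and solving for $R_n^{B,>}(x)$ gives
\[
R_n^{B,>}(x)=\frac{x(1+x)^{n-1}(1+w)^{n}}{2^{n}}\,B_n\Bigl(\tfrac{1-w}{1+w}\Bigr)
            =\frac{x}{2}\Bigl(\frac{1+x}{2}\Bigr)^{n-1}(1+w)^{n}B_n\Bigl(\frac{1-w}{1+w}\Bigr),
\]
which is the claimed identity.

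I do not expect a genuine obstacle here: once both expansions are written in the form above, the statement reduces to the single substitution $y=\frac{1-w}{1+w}$ and the elementary identity $\frac{2y}{(1+y)^{2}}=\frac{x}{1+x}$. The only ingredient requiring a separate (and short) argument is the equality $\bar b_{n,j}=\hat b_{n,j}$, and the one spot calling for care is the bookkeeping of the constants $2^{n}$, $(1+w)^{n}$ and $(1+x)^{n-1}$, which is exactly what fixes the prefactor $\frac{x}{2}\bigl(\frac{1+x}{2}\bigr)^{n-1}$.
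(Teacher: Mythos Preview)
Your proposal is correct and follows essentially the same approach as the paper: both proofs substitute $y=\frac{1-w}{1+w}$ into the $\gamma$-expansion of $B_n$ from Theorem~\ref{gamma-B}, use $\frac{2y}{(1+y)^2}=\frac{x}{1+x}$, and match the resulting sum against the expansion of $R_n^{B,>}(x)$ from Theorem~\ref{CB-1}. The only difference is presentational: you make the identification $\hat b_{n,j}=\bar b_{n,j}$ explicit (with the one-line descent--valley argument), whereas the paper silently passes from $\hat b_{n,j}$ to $\bar b_{n,j}$ in its computation.
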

\begin{proof}
    Theorem~\ref{gamma-B} implies that 
\begin{align*}
(1+w)^nB_n\left(\frac{1-w}{1+w}\right)
=\sum_{j=0}^{\lfloor n/2\rfloor}2^{n}\cdot\bar{b}_{n,j}\left(\frac{x}{1+x}\right)^j,
\end{align*}
which is equivalent to
\begin{align}
\frac{x}{1+x}\left(\frac{1+x}{2}\right)^n(1+w)^nB_n\left(\frac{1-w}{1+w}\right)=\sum_{j=0}^{\lfloor n/2\rfloor}\bar{b}_{n,j}x^{j+1}(1+x)^{n-j-1}.
\end{align}
Applying Theorem~\ref{CB-1} yields the desired formula.
\end{proof}
\begin{remark}
Formula~\eqref{Type B David-Barton} (with a minor typo) already appeared in 
Zhao~\cite[Theorem~4.3.3]{Zh11}, although no proof was provided there.  
Ma--Ma--Yeh~\cite[p.~8]{MMY20} established an equivalent form of this formula 
by combining Petersen’s $\gamma$--formula~\eqref{petersen-gamma} with a result 
of Zhao~\cite[Theorem~4.3.1]{Zh11}.
\end{remark}

Chow and Ma~\cite[p. 56]{CM14} asked for a combinatorial proof of the following identity~\cite[Theorem~13, Eq. (19)]{CM14}:
\begin{equation}\label{CMopen1}
    2^{n-1} R_{n+1}(x)=2R_{n}^{B,>}(x)+\frac{1+x}{x}\sum_{k=1}^{n-1}\binom{n}{k}R_{k}^{B,>}(x)\,R_{n-k}^{B,>}(x),
\end{equation}

Combining the type~A and type~B David--Barton formulas 
\eqref{Run-Euler} and~\eqref{Type B David-Barton}, and performing the substitution 
\(z \mapsto \tfrac{1-w}{1+w}\) with \(w=\sqrt{\tfrac{1-x}{1+x}}\), 
Eq.~\eqref{CMopen1} becomes equivalent to the following identity 
relating the type~A and type~B Eulerian polynomials.  
We now provide a combinatorial proof of this identity.

\begin{theorem}
     Let $n\geq 1$. Then 
\begin{align}\label{eq:Chow-Ma}
    2^n\,A_{n+1}(z) /z=\sum_{k=0}^n\binom{n}{k}B_k(z)\,B_{n-k}(z).
\end{align}
\end{theorem}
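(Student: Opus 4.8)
The plan is to prove the identity by a direct bijection that glues a pair of signed permutations into one ``signed word'' and then standardizes it. First I would rewrite both sides as weighted set cardinalities. Since $A_{n+1}(z)/z=\sum_{\sigma\in\S_{n+1}}z^{\des\sigma}$, writing the elements of $\S_{n+1}$ as words on $\{0,1,\dots,n\}$, the left-hand side equals $\sum z^{\des\sigma}$ over all pairs $(\sigma,\epsilon)$ with $\sigma\in\S_{n+1}$ and $\epsilon\in\{+,-\}^n$. For the right-hand side, I would use the order-preserving identification of $\B_k$ with the signed permutations of an arbitrary $k$-subset $S\subseteq[n]$; since the prepended letter $0$ stays below $S$, this identification preserves $\mathrm{des}_B$, so $\sum_{k}\binom nkB_k(z)B_{n-k}(z)=\sum z^{\,\mathrm{des}_B\,\pi+\mathrm{des}_B\,\tau}$, the sum running over all triples $(S,\pi,\tau)$ with $S\subseteq[n]$, $\pi$ a signed permutation of $S$, and $\tau$ a signed permutation of $[n]\setminus S$.

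Next I would define the gluing map $(S,\pi,\tau)\mapsto W$, where for $|S|=k$ we put $W=\overline{\pi_k}\,\overline{\pi_{k-1}}\cdots\overline{\pi_1}\,0\,\tau_1\tau_2\cdots\tau_{n-k}$, a word of length $n+1$ whose entries are $0$ together with exactly one of $\{i,-i\}$ for each $i\in[n]$. Reading off the position of the unique entry $0$ recovers $k$, hence $S$, $\pi$ and $\tau$, so this is a bijection onto the set $\mathcal W_n$ of all such signed words. The key step is the descent identity $\des W=\mathrm{des}_B\,\pi+\mathrm{des}_B\,\tau$: reversing and negating the $\pi$-block sends each descent $\pi_i>\pi_{i+1}$ to a descent $\overline{\pi_{i+1}}>\overline{\pi_i}$ of $W$, the transition $\overline{\pi_1}\to 0$ reproduces the possible leading descent of $0\pi$ (present iff $\pi_1<0$), the transition $0\to\tau_1$ reproduces that of $0\tau$, and the $\tau$-block is copied verbatim. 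Then I would standardize: for $W\in\mathcal W_n$ let $\sigma$ be the word obtained by replacing each entry by its rank in $\{0,\dots,n\}$, and let $\epsilon\in\{+,-\}^n$ record which of $\pm i$ occurs in $W$. Ranking preserves relative order, so $\des W=\des\sigma$; and $W$ is recovered from $(\sigma,\epsilon)$ as the image of $\sigma$ under the increasing bijection $\{0,\dots,n\}\to\{0\}\cup\{\epsilon_i\,i:i\in[n]\}$, so $W\mapsto(\sigma,\epsilon)$ is a bijection $\mathcal W_n\to\S_{n+1}\times\{+,-\}^n$. Composing the two bijections and summing weights yields $\sum_{k}\binom nkB_k(z)B_{n-k}(z)=\sum_{(\sigma,\epsilon)}z^{\des\sigma}=2^n\sum_{\sigma\in\S_{n+1}}z^{\des\sigma}=2^nA_{n+1}(z)/z$.

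I expect the only genuine obstacle to be the bookkeeping in the descent identity of the gluing step: one must make sure the two boundary transitions flanking the central $0$ correctly encode the ``$\pi_0=0$'' convention built into $\mathrm{des}_B$ on \emph{both} factors, and check the degenerate cases $k=0$ and $k=n$, where $\pi$ or $\tau$ is empty and $0$ sits at an end of $W$ (then one boundary transition is absent and the corresponding $\mathrm{des}_B$ equals $0$, so the identity persists). Everything else is a routine verification that the displayed maps are mutually inverse. As a consistency check, $\sum_{k}\binom nk|\B_k|\,|\B_{n-k}|=2^n\sum_{k}\binom nk k!\,(n-k)!=2^n\,(n+1)!=2^n|\S_{n+1}|$, which matches the total number of pairs $(\sigma,\epsilon)$.
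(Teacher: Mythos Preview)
Your proof is correct and follows essentially the same bijective route as the paper: both glue a pair of type-$B$ permutations around a central separator using reverse-and-negate on the left block, so that the two transitions flanking the separator reproduce the prepended $0$'s in $\mathrm{des}_B$ on each factor. The only difference is in the final identification with $\S_{n+1}$: the paper places the separator at the entry $1$ inside $\hat{\B}_{n+1}=\{\pi\in\B_{n+1}:+1\text{ occurs in }\pi\}$ and then passes to $\S_{n+1}$ by a $2^n$-to-$1$ sign-forgetting argument, whereas your choice of a central $0$ together with rank-standardization to $\S_{n+1}\times\{\pm\}^n$ makes both the factor $2^n$ and the descent preservation completely transparent.
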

\begin{proof}
For a signed permutation \(\pi=\pi_1\cdots\pi_n\in \B_n\) set \(\pi_0:=0\) and
\[
\des_B(\pi):=\#\{\,i\in\{0,1,\dots,n-1\}:\ \pi_i>\pi_{i+1}\,\}
\]
with respect to the usual total order on integers.
For an unsigned permutation we write \(\des(\cdot)\) for the usual type~A
descent number.

Let
\[
\hat{\B}_{n+1}:=\{\pi\in \B_{n+1}:\ \pi(i)=1\ \text{for some }i\}
\qquad\text{and}\qquad
\hat B_{n+1}(z):=\sum_{\pi\in\hat{\B}_{n+1}} z^{\des(\pi)} .
\]
We interpret \(\des(\pi)\) for \(\pi\in\hat{\B}_{n+1}\) as the type~A descent number
of the word \(\pi_1\cdots\pi_{n+1}\) read in the usual order on \(\mathbb Z\).

Fix \(\pi\in\hat{\B}_{n+1}\) and write it uniquely as
\(\pi=\pi' \, 1 \, \pi''\), where \(\pi'\) has length \(k\) and \(\pi''\) has length \(n-k\)
(\(0\le k\le n\)).
Then
\[
\des(\pi)=\des(\pi'1)+\des(1\pi''),
\]
since all potential descents of \(\pi\) occur either inside \(\pi'\), at the cut
\(\pi'_k\vert 1\), at the cut \(1\vert \pi''_1\), or inside \(\pi''\), with no overlap.

\smallskip
\emph{Right part.}
Define \(\tau:=1\pi''=\tau_1\cdots\tau_{n-k+1}\), and from \(\tau\) obtain
\(\tau'\) by shifting labels across \(1\):
\[
\tau'_i=\begin{cases}
\tau_i-1,& \tau_i>0,\\
\tau_i+1,& \tau_i<0.
\end{cases}
\]
Let \(\tau'':=\tau'_2\cdots\tau'_{n-k+1}\in\B_{n-k}\).
The map \(\tau\mapsto\tau'\) is order-preserving on \(\mathbb Z\setminus\{1\}\)
and sends the initial letter \(1\) to \(0\). Hence
\[
\des(1\pi'')=\des(\tau)=\des(\tau')=\des_B(\tau'').
\]

\smallskip
\emph{Left part.}
Similarly, set \(\sigma:=\pi'1\) and form \(\sigma'\) by the same shift across \(1\).
Now define
\[
\sigma'':=\overline{\sigma'_k}\ \overline{\sigma'_{k-1}}\ \cdots\ \overline{\sigma'_1}\in \B_k,
\]
where the bar means changing the sign. 
The composition “reverse + sign-flip’’ is an order anti-automorphism on
\(\mathbb Z\setminus\{0\}\), and inserting the leading \(0\) for type \(B\) transforms
descents of \(\sigma'\) into type \(B\) descents of \(\sigma''\). Consequently,
\[
\des(\pi'1)=\des(\sigma)=\des(\sigma')=\des_B(\sigma'').
\]

Putting the two parts together gives
\[
\des(\pi)=\des_B(\sigma'')+\des_B(\tau'').
\]
For a fixed split size \(k\), the choice of the \(k\) absolute values that appear
to the left of \(1\) contributes a factor \(\binom{n}{k}\), and \(\sigma''\) and \(\tau''\)
range freely over \(\B_k\) and \(\B_{n-k}\), respectively. Therefore
\[
\hat B_{n+1}(z)=\sum_{k=0}^n \binom{n}{k}\, B_k(z)\, B_{n-k}(z).
\]

On the other hand, forgetting the signs of the entries \(\neq 1\) gives a
\(2^n\)-to-\(1\) map \(\hat{\B}_{n+1}\to \mathfrak S_{n+1}\) that preserves type~A
descents (changing signs does not affect the relative order between any two
nonzero absolute values or with \(1\)). Hence
\[
\hat B_{n+1}(z)=2^n \sum_{\sigma\in \mathfrak S_{n+1}} z^{\des(\sigma)}=2^n A_{n+1}(z)/z.
\]
Combining the two expressions for \(\hat B_{n+1}(z)\) yields the desired identity \eqref{eq:Chow-Ma}.
\end{proof}

\begin{remark}
    Sun~\cite{Su21} proved a $\gamma$-positive expansion for the bivariate type~A Eulerian polynomials refined by  the parity of descent positions. 
A type B analogue of Sun's formula has been given by two of the present  authors in \cite{PZ23}.  
Using the BHR--action, one can similarly 
refine Theorem~\ref{gamma-B} with respect to the  parity of descent positions.
\end{remark}


\section{Alternating runs in type D}\label{sec:typeD}

Let \(\D_n\subseteq\B_n\) denote the subset of signed permutations having an
even number of negative entries.  
For \(\pi\in\D_n\), we use the same notion of alternating runs as in \(\B_n\); in
particular,
\[
\run_D(\pi)=\run_B(\pi)\qquad\text{for all }\pi\in\D_n.
\]
Define the following polynomials:
\begin{align*}
R_n^D(x)&=\sum_{\pi\in\D_n}x^{\run_D(\pi)},
\;\;R_n^{D,>}(x)=\sum_{\pi\in\D_n^>}x^{\run_D(\pi)},
\;\;R_n^{D,<}(x)=\sum_{\pi\in\D_n^<}x^{\run_D(\pi)},
\end{align*}
where 
\[
\D_n^>=\{\pi\in\D_n| \pi_1>0\}, \qquad
\D_n^<=\{\pi\in\D_n^<| \pi_1<0\}.
\]
The first few polynomials in these three families are listed in
Table~\ref{table-3}.

\begin{table}[ht]
\begin{tabular}{c|c|c|c}
&$R_n^D(x)$&$R_n^{D,>}(x)$&$R_n^{D,<}(x)$\\
\hline
$n=2$&$2x+2x^2$&$x+x^2$&$x+x^2$\\
$n=3$&$x+12x^2+11x^3$&$x+6x^2+5x^3$&$6x^2+6x^3$\\
$n=4$&$2x+38x^2+94x^3+58x^4$&$x+19x^2+47x^3+29x^4$&$x+19x^2+47x^3+29x^4$\\
\hline
\end{tabular}
\vspace{10pt}
\caption{First terms of $R_n^D(x)$, $R_n^{D,>}(x)$ and $R_n^{D,<}(x)$.}
\label{table-3}
\end{table}
Since \(\D_n^{>}\subseteq\B_n^{>}\) and \(\D_n^{<}\subseteq\B_n^{<}\), any statement
proved for all \(\pi\in\B_n^{>}\) (resp.\ \(\pi\in\B_n^{<}\)) remains valid for all
\(\pi\in\D_n^{>}\) (resp.\ \(\pi\in\D_n^{<}\)).
Define
\[
\mathcal{A}_n^D=\mathcal{A}_n^B\cap \D_n .
\]

\begin{defi}
A permutation in $\mathcal{A}_n^D$ is called a type D Andr\'e permutations of length $n$. 
\end{defi}
 Since the MHR--action does not change the number of negative terms in a signed permutation, by Theorem \ref{CB-1}  we have the following results.
\begin{theorem}\label{CD-1}
For $n\geq1$, we have
\begin{align}\label{k-6}
R_n^{D,>}(x)&=\sum_{k=1}^{\lceil\frac{n+1}{2}\rceil}\widehat{d}_{n,k}x^k(1+x)^{n-k}\\
&=\sum_{k=0}^{\lfloor\frac{n}{2}\rfloor}\tilde{d}_{n,k}x^{k-1}(1+x)^{n+1-k},
\end{align}
where $\widehat{d}_{n,k}=\#\{\pi\in\mathcal{S}_n | \leaf(T_\pi)=k\; \text{and}\;\, |\Neg(\pi)|\equiv0\; (\mathrm{mod}\;2)\}$, and $\tilde{d}_{n,k}$ is the number of type D Andr\'e permutations  with length $n$ and $k$ valleys.
\end{theorem}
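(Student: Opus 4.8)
The plan is to obtain Theorem~\ref{CD-1} as a restriction of the Type~B results of Section~\ref{sec:typeB} to the subgroup $\D_n$, the one new ingredient being that the MHR--action preserves the number of negative entries of a signed permutation. First I would record this invariance: each generator $\widehat{\psi}_i$ replaces the label of $v_i$ by an element of $T_\pi^{r}(\pi_i)$ and permutes the remaining labels of $T_\pi^{r}(\pi_i)$ while keeping their relative order, i.e.\ it relabels the node set $\{v_i\}\cup V(T_\pi^{r}(\pi_i))$ by a permutation of the set of its current labels. Hence $\widehat{\psi}_i$, and therefore every $\widehat{\psi}_S$, leaves the set of all node labels of $T_\pi$ unchanged, so it leaves $\Neg(\pi)$ and $|\Neg(\pi)|$ unchanged. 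Consequently, if $\pi\in\D_n^{>}$ then $\Orb^{*}(T_\pi)\subseteq\{T_\sigma:\sigma\in\D_n^{>}\}$; in other words $\D_n^{>}$ is a union of MHR--orbits.

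Next I would apply Theorem~\ref{T-2} orbit by orbit. Since $\Lpv(\pi^{\ast})\subseteq[n-1]$ always, Lemma~\ref{L-4} with $S=[n-1]$ produces in each orbit $\Orb^{*}(T_\pi)$ a unique tree $T_\sigma$ with $\Lpv(\sigma)=[n-1]$, that is, a unique snake $\sigma\in\mathcal{S}_n$; by the invariance just noted this snake lies in $\D_n$ exactly when the orbit is contained in $\D_n^{>}$. As the leaf count $\leaf(T_\pi)$ depends only on the orbit, summing the identity of Theorem~\ref{T-2} over the orbits inside $\D_n^{>}$ gives
\[
R_n^{D,>}(x)=\sum_{\substack{\sigma\in\mathcal{S}_n\\ |\Neg(\sigma)|\ \text{even}}}x^{\leaf(T_\sigma)}(1+x)^{\,n-\leaf(T_\sigma)}=\sum_{k\ge1}\widehat d_{n,k}\,x^{k}(1+x)^{\,n-k},
\]
which is the first displayed identity.

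For the second identity I would restrict the bijection of Lemma~\ref{s-a}. That bijection sends a snake in $\mathcal{S}_{n,k}$ to a Type~B André permutation in $\mathcal{A}_{n,k-1}^{B}$ by applying an HR-- or MHR--action that turns every $\max$--node into a $\min$--node; by exactly the same label-set argument it preserves $|\Neg|$, hence restricts to a bijection between $\{\sigma\in\mathcal{S}_{n,k}:|\Neg(\sigma)|\ \text{even}\}$ and the Type~D André permutations in $\mathcal{A}_n^{D}=\mathcal{A}_n^{B}\cap\D_n$ having $k-1$ descents. Since for André permutations every descent is isolated and followed by an ascent, the number of descents equals the number of valleys, so the latter set is precisely the one counted by $\tilde d_{n,k-1}$; thus $\widehat d_{n,k}=\tilde d_{n,k-1}$. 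Substituting this into the first identity and relabelling the summation index then yields the second identity. The whole argument is bookkeeping layered on Section~\ref{sec:typeB}, and the only step I expect to require genuine care --- and would therefore write out in full --- is the $\Neg$--invariance of both the MHR--action and the Lemma~\ref{s-a} bijection, since it is exactly this that makes passing to orbits compatible with the Type~D restriction.
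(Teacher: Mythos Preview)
Your proposal is correct and follows exactly the approach the paper intends: the paper's entire proof is the single sentence ``Since the MHR--action does not change the number of negative terms in a signed permutation, by Theorem~\ref{CB-1} we have the following results,'' so your plan of (i) verifying $\Neg$--invariance of the MHR--action, (ii) summing Theorem~\ref{T-2} over the orbits contained in $\D_n^{>}$, and (iii) restricting the bijection of Lemma~\ref{s-a} to the even--negative case, is precisely the argument being gestured at. Your observation that the Lemma~\ref{s-a} bijection also preserves $|\Neg|$ (since both the HR-- and MHR--actions used there permute the label set of the affected subtree) is the one detail the paper leaves entirely implicit, and your identification $\mathrm{des}=\mathrm{val}$ for type~B Andr\'e permutations is the correct bridge between $\bar b_{n,k-1}$ and $\tilde d_{n,k-1}$; note that the second displayed formula in the theorem has an index typo (it should read $x^{k+1}(1+x)^{n-k-1}$), which your substitution would expose.
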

\begin{theorem}\label{CD-2}
For $n\geq1$, we have
\begin{align}\label{k-7}
R_n^{D,<}(x)=\sum_{k=1}^{\lceil\frac{n+1}{2}\rceil}\bar{d}_{n,k}x^k(1+x)^{n-k},
\end{align}
where $\bar{d}_{n,k}=\#\{\pi\in\overline{\mathcal{S}}_n | \leaf(T_\pi)=k\; \text{and}\;\, |\Neg(\pi)|\equiv0\; (\mathrm{mod}\;2)\}$.
\end{theorem}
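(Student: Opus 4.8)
The plan is to mimic the proof of Theorem~\ref{CB-1} (equivalently of Theorem~\ref{CD-1}), retaining only those MHR--orbits of $\B_n^{<}$ all of whose members carry an even number of negative entries. First I would record the two structural facts that make this restriction legitimate: the MHR--action preserves the subset $\B_n^{<}$, so $\Orb^{*}(T_\pi)\subseteq\B_n^{<}$ whenever $\pi\in\B_n^{<}$; and each $\widehat{\psi}_i$ only permutes, by an order--preserving map, the labels carried by $v_i$ and by the nodes of its right subtree, and therefore leaves the \emph{multiset} of all labels of $T_\pi$ — in particular the value of $|\Neg(\pi)|$ — unchanged. Consequently $|\Neg(\cdot)|$ is constant on each orbit, and $\D_n^{<}=\{\pi\in\B_n^{<}:|\Neg(\pi)|\equiv0\pmod 2\}$ is a union of entire MHR--orbits.

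Next, for an orbit $\Orb^{*}(T_\pi)$ with $\pi\in\D_n^{<}$, Theorem~\ref{T-2} yields
\[
\sum_{T_\sigma\in\Orb^{*}(T_\pi)}x^{\run_B(\sigma)}=x^{\leaf(T_\pi)}(1+x)^{\,n-\leaf(T_\pi)},
\]
and, since $\run_D=\run_B$ on $\D_n$, this orbit contributes exactly $x^{\leaf(T_\pi)}(1+x)^{\,n-\leaf(T_\pi)}$ to $R_n^{D,<}(x)$. By Lemma~\ref{L-4} applied with $S=[n-1]$, every MHR--orbit of $\B_n^{<}$ contains a unique \emph{alternating} permutation $\sigma$, namely the one for which $0\sigma$ changes direction at each index $1,\dots,n-1$. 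A permutation of $\B_n^{<}$ has this form precisely when it equals $\overline{\tau}$ for some snake $\tau\in\mathcal{S}_n$: passing from $\sigma$ to $\overline{\sigma}$ converts an up--down alternating word beginning with a negative entry into a down--up alternating word beginning with a positive entry. Hence the canonical (alternating) representatives of the orbits of $\B_n^{<}$ are exactly the elements of $\overline{\mathcal{S}}_n$. Because $\leaf$ and $|\Neg|$ are orbit invariants, the even--negative orbits of $\B_n^{<}$ whose common leaf number equals $k$ are in bijection with $\{\pi\in\overline{\mathcal{S}}_n:\leaf(T_\pi)=k,\ |\Neg(\pi)|\equiv0\pmod 2\}$, a set of cardinality $\bar d_{n,k}$ by definition.

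Summing the orbit contributions over all even--negative orbits of $\B_n^{<}$ and collecting terms according to the common value $k=\leaf$, I obtain
\[
R_n^{D,<}(x)=\sum_{k\ge1}\bar d_{n,k}\,x^{k}(1+x)^{\,n-k}.
\]
Finally $\bar d_{n,k}=0$ as soon as $k>\lceil(n+1)/2\rceil$, since $\lceil(n+1)/2\rceil$ is already the largest leaf number attained by the trees $T_\tau$ with $\tau\in\mathcal{S}_n$, hence by those with $\tau\in\overline{\mathcal{S}}_n$, exactly as in Theorem~\ref{CB-1}; this pins down the summation range and completes the argument. I expect the only point requiring genuine (though still routine) attention to be the identification of the alternating permutations of $\B_n^{<}$ with $\overline{\mathcal{S}}_n$, together with the observation that the MHR--action neither creates nor destroys negative entries; both are immediate from the definitions and from the results quoted above, so no substantial obstacle arises.
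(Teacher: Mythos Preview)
Your proposal is correct and follows exactly the paper's approach: the paper simply states that ``since the MHR--action does not change the number of negative terms in a signed permutation, by Theorem~\ref{CB-1} we have the following results,'' and your write-up is a faithful expansion of that one sentence, supplying the observation that $|\Neg(\cdot)|$ is an orbit invariant, that $\D_n^{<}$ is therefore a union of MHR--orbits, and that the unique alternating representative of an orbit in $\B_n^{<}$ lies in $\overline{\mathcal{S}}_n$. Nothing is missing and nothing differs methodologically from the paper.
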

Note that  $\widehat{d}_{n,k}=\bar{d}_{n,k}$ if and only if 
$n$ is even. 

\begin{coro}\label{CD-3}
For $n\geq1$, we have
\begin{align}\label{k-8}
R_n^{D}(x)=\sum_{k=1}^{\lceil\frac{n+1}{2}\rceil}(\widehat{d}_{n,k}+\bar{d}_{n,k})x^k(1+x)^{n-k},
\end{align}
where $\widehat{d}_{n,k}+\bar{d}_{n,k}=\#\{\pi\in\overline{\mathcal{S}}_n\cup\mathcal{S}_n | \leaf(T_\pi)=k\; \text{and}\;\, |\Neg(\pi)|\equiv0\; (\mathrm{mod}\;2) \}$.
\end{coro}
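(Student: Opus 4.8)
The plan is to obtain Corollary~\ref{CD-3} directly from Theorems~\ref{CD-1} and~\ref{CD-2} via the disjoint decomposition $\D_n = \D_n^{>} \sqcup \D_n^{<}$.

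First I would note that every $\pi\in\D_n$ satisfies $\pi_1\neq 0$ (the absolute values of the entries form a permutation of $[n]$), so sorting by the sign of $\pi_1$ partitions $\D_n$ into $\D_n^{>}$ and $\D_n^{<}$. Since all three polynomials $R_n^D$, $R_n^{D,>}$, $R_n^{D,<}$ track the same statistic $\run_D$, this gives $R_n^D(x)=R_n^{D,>}(x)+R_n^{D,<}(x)$. Then I would substitute the first formula of Theorem~\ref{CD-1} for $R_n^{D,>}(x)$ and the formula of Theorem~\ref{CD-2} for $R_n^{D,<}(x)$. Both are sums over the \emph{same} index range $1\le k\le\lceil (n+1)/2\rceil$ with the \emph{same} monomials $x^k(1+x)^{n-k}$, so adding them termwise collects the coefficients into $\widehat{d}_{n,k}+\bar{d}_{n,k}$, which is precisely \eqref{k-8}.

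It remains to read off the combinatorial meaning of $\widehat{d}_{n,k}+\bar{d}_{n,k}$. By the definitions in Theorems~\ref{CD-1} and~\ref{CD-2}, $\widehat{d}_{n,k}$ counts the $\pi\in\mathcal{S}_n$ (so $\pi_1>0$) with $\leaf(T_\pi)=k$ and $|\Neg(\pi)|$ even, while $\bar{d}_{n,k}$ counts the $\pi\in\overline{\mathcal{S}}_n$ (so $\pi_1<0$) with the same two constraints. Because $\mathcal{S}_n$ and $\overline{\mathcal{S}}_n$ are disjoint — leading entry positive versus negative — the two counts add to $\#\{\pi\in\overline{\mathcal{S}}_n\cup\mathcal{S}_n : \leaf(T_\pi)=k,\ |\Neg(\pi)|\equiv 0\pmod 2\}$, as asserted.

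This argument has no genuine obstacle; the only points worth double-checking are that the two generating-function identities share identical ranges and monomial bases (so their coefficients may be added without reindexing) and that $\mathcal{S}_n\cap\overline{\mathcal{S}}_n=\emptyset$. The latter also explains, for instance, why $\bar{d}_{n,1}=0$ for odd $n$, consistent with the vanishing linear term of $R_n^{D,<}$ in Table~\ref{table-3}.
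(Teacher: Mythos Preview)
Your proposal is correct and matches the paper's approach: the paper states Corollary~\ref{CD-3} without proof, treating it as an immediate consequence of Theorems~\ref{CD-1} and~\ref{CD-2} via the disjoint decomposition $\D_n=\D_n^{>}\sqcup\D_n^{<}$, exactly as you argue. Your additional remarks about the disjointness of $\mathcal{S}_n$ and $\overline{\mathcal{S}}_n$ and the shared monomial basis make explicit what the paper leaves implicit.
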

\vskip 0.3cm
The first few such decompositions of $R_n^D(x)$ ($n\geq 2$) are :
\begin{align*}
R_2^D(x)&=2x(1+x),\\[4pt]
R_3^D(x)&=x(1+x)^2+10x^2(1+x),\\[4pt]
R_4^D(x)&=2x(1+x)^3+32x^2(1+x)^2+24x^3(1+x),\\[4pt]
R_5^D(x)&=x(1+x)^4+116x^2(1+x)^3+244x^3(1+x)^2,\\[4pt]
R_6^D(x)&=2x(1+x)^5+332x^2(1+x)^4+1804x^3(1+x)^3+448x^4(1+x)^2.
\end{align*}

\begin{theorem}
For $n\geq 2$, let $m=\lfloor(n-1)/2\rfloor$, and let  $\widetilde{\dhr}_n^+$ (resp. $\widetilde{\dhr}_n^-$) denote  the set  of HR--trees in $\widetilde{\bhr}_n$ (resp. $\widetilde{\bhr}_n^-$) with even numbers of negative nodes. Then
\begin{subequations}
\begin{align}
R_n^{D,>}(x)&=(1+x)^{m}\sum_{T\in \widetilde{\dhr}_n^+}x^{\max(T)+1},\\
R_n^{D,<}(x)&=(1+x)^{m}\sum_{T\in \widetilde{\dhr}_n^-}x^{\min(T)+1},\\
R_n^{D}(x)&=(1+x)^{m}\left(\sum_{T\in \widetilde{\dhr}_n^+}x^{\max(T)+1}+\sum_{T\in \widetilde{\dhr}_n^-}x^{\min(T)+1}\right),
\end{align}
\end{subequations}
where $\max(T)$ (resp. $\min(T)$) is the number of $\max$-nodes ($\min$-nodes) of T.
\end{theorem}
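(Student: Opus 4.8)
The plan is to deduce this from Theorem~\ref{Quotient-B} by checking that the entire type~B argument restricts to the sub-families of trees having an even number of negative labels. The key point---already invoked before Theorem~\ref{CD-1}---is that each HR--action $\psi_i$ and each MHR--action $\widehat{\psi}_i$ merely redistributes the fixed label set of one subtree, so the label multiset of the tree, and in particular $|\Neg(w(T))|$ together with its parity, is preserved; hence every orbit $\Orb^{*}(T_\pi)$ lies in a single parity class, and the generating-function factorizations underlying Theorem~\ref{T-2} and Theorem~\ref{CB-1} split according to this parity.

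First I would factor out $(1+x)^m$. By Theorem~\ref{CD-1} (resp.\ Theorem~\ref{CD-2}, resp.\ Corollary~\ref{CD-3}),
\[
R_n^{D,>}(x)=\sum_{k=1}^{\lceil (n+1)/2\rceil}\widehat{d}_{n,k}\,x^{k}(1+x)^{n-k},
\]
and likewise for $R_n^{D,<}(x)$ and $R_n^D(x)$. Since $n-\lceil (n+1)/2\rceil=\lfloor (n-1)/2\rfloor=m$, each of these polynomials is divisible by $(1+x)^m$ with quotient $\sum_{k}\widehat{d}_{n,k}\,x^{k}(1+x)^{\lceil (n+1)/2\rceil-k}$ (and the obvious analogues); it then remains to identify this quotient with $\sum_{T\in\widetilde{\dhr}_n^{+}}x^{\max(T)+1}$, resp.\ with $\sum_{T\in\widetilde{\dhr}_n^{-}}x^{\min(T)+1}$, the third identity following by addition.

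For the quotient I would reuse the tree bijection from the proof of Theorem~\ref{Quotient-B}, carrying the negativity parity along. In outline: $\widehat{d}_{n,k}$ counts snakes with $k$ leaves and an even number of negatives, which by the parity-preserving bijection of Lemma~\ref{s-a} equals the number of type~D André permutations with $k-1$ descents; for such a $\pi$ all inner nodes of $T_\pi$ are $\min$--nodes (Lemma~\ref{andre-B}, Fact~\ref{fact1}), with $k-1$ two-children nodes and $\lceil (n-2k+1)/2\rceil=\lceil (n+1)/2\rceil-k$ even one-child nodes, this count being the same whether the node $0$ is a leaf or an inner node. Flipping, via the MHR--action, all $k-1$ two-children nodes together with a freely chosen $i$-subset of the even one-child nodes into $\max$--nodes (the odd one-child nodes remaining $\min$--nodes) produces a tree in $\widetilde{\bhr}_n^{+}$ that still has an even number of negative labels, hence lies in $\widetilde{\dhr}_n^{+}$, and satisfies $\max(T)=(k-1)+i$. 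Summing the $\binom{\lceil (n+1)/2\rceil-k}{i}$ choices over $k$ and $i$ recovers $\sum_k\widehat{d}_{n,k}x^{k}(1+x)^{\lceil (n+1)/2\rceil-k}$, exactly as in the type~A and type~B computations; the minus case is the mirror image ($\max\leftrightarrow\min$, $\mathcal{S}_n\leftrightarrow\overline{\mathcal{S}}_n$, $\widetilde{\dhr}_n^{+}\leftrightarrow\widetilde{\dhr}_n^{-}$), and adding the two identities yields the formula for $R_n^D(x)$.

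I expect the only real work to be routine bookkeeping: verifying the count of even one-child nodes in both the ``$0$ a leaf'' and ``$0$ an inner node'' cases, and checking that the exponents $(k-1)+i+1=k+i$ produced by the flips match the monomials of the quotient term by term. There is no new conceptual obstacle once the $|\Neg|$-invariance and Theorem~\ref{Quotient-B} are in hand.
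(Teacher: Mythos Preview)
Your approach is essentially the same as the paper's: factor out $(1+x)^m$ via Theorems~\ref{CD-1}, \ref{CD-2} and Corollary~\ref{CD-3}, then invoke the type~B result Theorem~\ref{Quotient-B}, using that the HR/MHR actions preserve the label multiset and hence the parity of $|\Neg|$. The paper's own proof is terser---it records the factorizations and simply says ``By Theorem~\ref{Quotient-B} the desired results follow''---whereas you have spelled out the bookkeeping showing the type~B bijection restricts to the even-negative subfamily.
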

\begin{proof} Let $m=\lfloor (n-1)/2\rfloor$.
Since 
\[
n-\Big\lceil \frac{n+1}{2}\Big\rceil = \Big\lfloor \frac{n-1}{2}\Big\rfloor,
\]
Theorem~\ref{CD-1}, Theorem~\ref{CD-2}, and Corollary~\ref{CD-3} imply the following product formulae:
\begin{subequations}\label{eq:quotients D}
\begin{align}\label{Q-2}
R_n^{D,>}(x)&=(1+x)^{m}\sum_{k=1}^{\lceil\frac{n+1}{2}\rceil}\widehat{d}_{n,k}x^k(1+x)^{\lceil\frac{n+1}{2}\rceil-k},\\
\label{Q-5}
R_n^{D,<}(x)&=(1+x)^{m}\sum_{k=1}^{\lceil\frac{n+1}{2}\rceil}\bar{d}_{n,k}x^k(1+x)^{\lceil\frac{n+1}{2}\rceil-k},\\
R_n^{D}(x)&=(1+x)^{m}\sum_{k=1}^{\lceil\frac{n+1}{2}\rceil}(\widehat{d}_{n,k}+\bar{d}_{n,k})x^k(1+x)^{\lceil\frac{n+1}{2}\rceil-k}.
\end{align}
\end{subequations}
By Theorem \ref{Quotient-B}  the desired results follow.
\end{proof}
Let $Q_n^B(x)=R_n^B(x)(1+x)^{-m}$ and $Q_n^D(x)=R_n^D(x)(1+x)^{-m}$, where $m=\lfloor(n-1)/2\rfloor$. Then
\begin{align} 
Q_n^B(-1)&=2b_{n,k}(-1)^{\lceil\frac{n+1}{2}\rceil}\neq0;\\ 
Q_n^D(-1)&=(\widehat{d}_{n,k}+\bar{d}_{n,k})(-1)^{\lceil\frac{n+1}{2}\rceil}\neq0.
\end{align} 
Thus $-1$ is a zero of multilicity $m$ of $R_n^B(x)$ and $R_n^D(x)$. Actually, it is also easy to see that $-1$ is a zero of multilicity $m$ of $R_n^{B, >}(x)$, $R_n^{B, <}(x)$, $R_n^{D,>}(x)$ and $R_n^{D, <}(x)$.

\begin{remark} 
In \cite[Theorem~6.3]{DPS09} the following identity is proved
\[
B_n(t)=D_n(t)+n 2^{n-1} A_{n-2}(t) \quad (n\geq 2).
\]
There is a type D $\gamma$-positive formula in \cite[Theorem~13.9]{Pe15}.
It would be interesting to know whether there is a suitable  
David-Barton type formula of type D.
\end{remark}
\section*{Acknowledgement}
The first author's work was supported by the National Natural Science Foundation of China grant 12201468.


\end{document}